\documentclass[10pt]{article}

\usepackage[utf8]{inputenc}
\usepackage[T1]{fontenc}

\usepackage{epsf}
\usepackage{amsmath}

\allowdisplaybreaks

\usepackage[showframe=false]{geometry}
\usepackage{changepage}

\usepackage{epsfig}
\usepackage{amssymb}

\usepackage{amsthm}
\usepackage{setspace}
\usepackage{cite}
\usepackage{mcite}

\usepackage{algorithmic}  
\usepackage{algorithm}

\usepackage{shadow}
\usepackage{fancybox}
\usepackage{fancyhdr}

\usepackage{color}
\usepackage[usenames,dvipsnames,svgnames,table]{xcolor}
\newcommand{\bl}[1]{\textcolor{blue}{#1}}

\definecolor{mypurple}{rgb}{.4,.0,.5}

\usepackage[hyphens]{url}

\usepackage[colorlinks=true,
            linkcolor=black,
            urlcolor=blue,
            citecolor=purple]{hyperref}

\usepackage{breakurl}

\def\s{{\bf s}}

\def\y{{\bf y}}

\def\x{{\bf x}}

\def\x{{\mathbf x}}

\def\s{{\bf s}}

\def\x{{\bf x}}
\def\y{{\bf y}}

\def\tr{\mbox{Tr}}

\def\tr{{\rm tr}\,}

\def\be{\begin{equation}}
\def\ee{\end{equation}}
\def\ba{\left[\begin{array}}
\def\ea{\end{array}\right]}

\def\s{{\bf s}}

\def\x{{\bf x}}
\def\y{{\bf y}}

\def\1{{\bf 1}}

\def\0{{\bf 0}}

\def\cG{{\mathcal G}}

\def\A{{\bf A}}

\def\mR{{\mathbb R}}

\def\mN{{\mathbb N}}
\def\mE{{\mathbb E}}
\def\mS{{\mathbb S}}

\def\lp{\left (}
\def\rp{\right )}

\def\s{{\bf s}}

\def\y{{\bf y}}

\def\x{{\bf x}}

\def\x{{\mathbf x}}

\def\s{{\bf s}}

\def\x{{\bf x}}
\def\y{{\bf y}}

\def\tr{\mbox{Tr}}

\def\tr{{\rm tr}\,}

\def\be{\begin{equation}}
\def\ee{\end{equation}}
\def\ba{\left[\begin{array}}
\def\ea{\end{array}\right]}

\def\s{{\bf s}}

\def\x{{\bf x}}
\def\y{{\bf y}}

\def\({\left (}
\def\){\right )}

\def\1{{\bf 1}}

\def\0{{\bf 0}}

\def\cX{{\mathcal X}}

\def\cL{{\mathcal L}}

\def\cZ{{\mathcal Z}}
\def\cA{{\mathcal A}}

\def\cX{{\mathcal X}}

\usepackage{xcolor}
\usepackage{color}

\definecolor{darkgreen}{rgb}{0, 0.4,0}

\definecolor{purplebrown}{rgb}{0.5,0.1,0.6}

\definecolor{ultclupcol}{rgb}{0.1,0.5,0.5}

\definecolor{mytrycolor}{rgb}{0.5,0.7,0.2}

\definecolor{ultclupcola}{rgb}{.5,0,.5}

\definecolor{shadebrown}{rgb}{0.1,0.1,0.9}
\definecolor{lightblue}{rgb}{0.2,0,1}

\usepackage{fancybox}
\usepackage{graphicx}
\usepackage{epstopdf}
\usepackage{epsfig}
\usepackage{wrapfig}
\usepackage{subfigure}

\usepackage{xcolor}
\usepackage{tcolorbox}

\newtcbox{\xmybox}{on line,
arc=7pt,
before upper={\rule[-3pt]{0pt}{10pt}},boxrule=0pt,
boxsep=0pt,left=6pt,right=6pt,top=0pt,bottom=0pt,enhanced, coltext=blue, colback=white!10!yellow}

\newtcbox{\xmyboxa}{on line,
arc=7pt,
before upper={\rule[-3pt]{0pt}{10pt}},boxrule=0pt,
boxsep=0pt,left=6pt,right=6pt,top=0pt,bottom=0pt,enhanced, colback=white!10!yellow}

\newtcbox{\xmyboxb}{on line,
arc=7pt,
before upper={\rule[-3pt]{0pt}{10pt}},boxrule=1pt,colframe=darkgreen!100!blue,
boxsep=0pt,left=6pt,right=6pt,top=0pt,bottom=0pt,enhanced, colback=white!10!yellow}

\newtcbox{\xmyboxc}{on line,
arc=7pt,
before upper={\rule[-3pt]{0pt}{10pt}},boxrule=.7pt,colframe=blue!100!blue,
boxsep=0pt,left=6pt,right=6pt,top=0pt,bottom=0pt,enhanced, coltext=blue, colback=white!10!yellow}

\newtcbox{\xmytboxa}{on line,
arc=7pt,
before upper={\rule[-3pt]{0pt}{10pt}},boxrule=.0pt,colframe=pink!50!yellow,
boxsep=0pt,left=6pt,right=6pt,top=0pt,bottom=0pt,enhanced, coltext=white, colback=blue!40!red}

\newtcbox{\xmytboxb}{on line,
arc=7pt,
before upper={\rule[-3pt]{0pt}{10pt}},boxrule=.0pt,colframe=pink!50!yellow,
boxsep=0pt,left=6pt,right=6pt,top=0pt,bottom=0pt,enhanced, coltext=white, colback=white!40!green}

\makeatletter
\newcommand\subsubsubsection{\@startsection{paragraph}{4}{\z@}{-2.5ex\@plus -1ex \@minus -.25ex}{1.25ex \@plus .25ex}{\normalfont\normalsize\bfseries}}
\newcommand\subsubsubsubsection{\@startsection{subparagraph}{5}{\z@}{-2.5ex\@plus -1ex \@minus -.25ex}{1.25ex \@plus .25ex}{\normalfont\normalsize\bfseries}}
\makeatother

\newtheorem{theorem}{Theorem}

\newtheorem{corollary}{Corollary}

\newtheorem{remark}{Remark}

\begin{document}

\begin{singlespace}

\title {Proving Lehner's formulas via RDT -- indefinite and asymmetric scenarios
}
\author{
\textsc{Mihailo Stojnic
\footnote{e-mail: {\tt flatoyer@gmail.com}}
}}
\date{}
\maketitle

\centerline{{\bf Abstract}} \vspace*{0.1in}

The strong asymptotic freeness established in \cite{HaagThor05, Schultz05} allows for the study of norms of Gaussian matrix polynomials via corresponding free operator counterparts. For spectral edges of semicircular free counterparts to symmetric Kronecker-Gaussian matrices, Lehner in \cite{Lehner99} determined closed-form analytical characterizations. As an alternative to classical spectral methods, in  \cite{Stojnicleh26}, we created a Random Duality Theory (RDT) based framework for studying these problems and reproved Lehner's formula for definite matrix coefficients.

In this work, we develop the RDT machinery further and achieve strong progress in several key directions: (i) Symmetric Square Case: We consider indefinite matrix coefficients and provide spectral edges lower bounds that match Lehner's formula. (ii) Asymmetric Non-Square Variants: We establish RDT asymmetric analogues to Lehner formulas and prove that they lower-bound the spectral edges. (iii)  Interlaced Decoupling Property: We uncover that the obtained analogues exhibit a remarkable interlaced decoupling property. 

Additionally, we consider semi-definite programming (SDP) formulations that allow for the practical solution of the obtained analytical characterizations. Theoretical predictions obtained through solving SDPs are compared to numerical simulations, showing a striking agreement even for problem dimensions on the order of a few hundreds.

\vspace*{0.25in} \noindent {\bf Index Terms: Strong asymptotic freeness; Lehner's formula; Random duality theory (RDT)}.

\end{singlespace}

\section{Introduction}
\label{sec:back}

Remarkable breakthroughs \cite{Schultz05, HaagThor05} allowed the move from \emph{weak} \cite{Voic91} to \emph{strong asymptotic} freeness and showed that the spectra of Gaussian matrix polynomials converge towards their semicircular free counterparts. Further developments soon followed, including establishing analogous results for different random \cite{Anderson13} or deterministic plus random (Wigner/GUE/Haar/permutations) scenarios \cite{Parraud22, CollMale14, Male12, GuionnShly09, CollinsGP22, Belinschi17}.

The range of applications expanded rapidly as well. Studies of various geometric \cite{HideMagee23}, graph-related \cite{LubPer16, BorCollins19}, and algebraic phenomena \cite{Hayes22} became possible, often allowing for the resolution of many important, long-standing conjectures and open problems \cite{Hayes22, HaagThor05, LubPer16, BorCollins19}.

Particularly strong progress was achieved very recently in \cite{bandfree23, bandfree24}, where \emph{intrinsic} freeness was introduced as a finite-dimensional asymptotic freeness counterpart. It allows to circumvent asymptotic scenarios and analyze spectral features of a very wide class of Gaussian matrices. For a diverse range of applications related to spiked models, sample covariances, graph properties, and more, please see \cite{bandfree23, bandfree24, HandelSurvey26}.

\section{Preliminaries and contextualization}
\label{sec:sampcov}

Practical relevance and utilization of the freeness concepts ultimately relies on one's ability to analyze the associated free equivalent models. In that regard, a series of results established by Lehner \cite{Lehner99,Lehner98,Lehner01} can be viewed as trailblazing. In many practical scenarios, the characterizations established therein remain difficult to improve upon.

Of particular interest for our considerations are elegant deterministic spectral edges formulas for a class of semicircular free operators obtained in \cite{Lehner99} (by the strong asymptotic freeness, they fully characterize the spectral edges of the Kronecker–Gaussian ensembles). Namely, in \cite{Stojnicleh26}, we circumvented random matrix theory and developed a different methodology to determine Kronecker-Gaussian matrices spectral edges and ultimately reprove Lehner's formula from \cite{Lehner99}. Most notably, we showed that relying solely on concepts utilized within \emph{Random Duality Theory} (RDT) \cite{StojnicCSetam09, StojnicRegRndDlt10, Stojniccovmat26} is fully sufficient in that aspect.

However, the results of \cite{Stojnicleh26} treat the so-called definite variant of Lehner's formulas. Here, we develop the RDT machinery of \cite{Stojnicleh26} further so that (i) \emph{indefinite} and (ii) \emph{asymmetric} Lehner's formulas analogues can be treated as well.

\subsection{Mathematical setup}
\label{sec:context}

To properly explain and contextualize our main contributions, we first introduce a few mathematical preliminaries, starting with precise definitions of the key objects studied below.

We consider positive integers $m,n,k_1,k_2,l\in\mathbb{N}$ and operate in a high-dimensional regime where $n$ and $m$ are large, with $\alpha=\lim_{n\rightarrow \infty}\frac{m}{n}$. While the analysis allows for $k_1$, $k_2$, and $l$ to depend on $n$, we treat them as fixed to keep the presentation focused on conceptual insights. The primary source of randomness in our study is a collection of independent standard normal matrices $G_i=G_i^T\in\mathbb{R}^{m\times n}, i=1,\dots,l$. When $m=n$, we associate them with their symmetric counterparts $\bar{G}_i=\bar{G}_i^T\in\mathbb{R}^{n\times n}, i=1,\dots,l$.
\begin{equation}\label{eq:amat1a0}
 \bar{G}_i = \frac{1}{\sqrt{2}} \lp G_i + G_i^T\rp, i=1\dots,l. 
 \end{equation}
Let $A_i \in \mathbb{R}^{k_1 \times k_2},i=0,1,\dots,l$ be a collection of deterministic matrices. When $k_1=k_2=k$, we also associate with it its symmetric (a priori indefinite) counterpart $A_i=A_i^T \in \mathbb{R}^{k \times k},i=0,1,\dots,l$. We will denote the standard identity matrix by $I$. Its size will be clear from the context, though we may occasionally emphasize the size by adding a subscript.

Considering the symmetric scenario (where $k_1=k_2=k$ and $m=n$), the following sum of Kronecker-Gaussian matrices is of critical importance in studying the spectral norms of matrix polynomials \cite{HaagThor05,CollinsGP22,Schultz05}
\begin{equation}\label{eq:amat1a0b0}
 H = A_0\otimes I + \frac{1}{\sqrt{n}} \sum_{i=1}^{l} A_i\otimes \bar{G}_i. 
 \end{equation}
A main takeaway associated with the asymptotic freeness is that the spectrum of $H$  for large $n$ approaches the spectrum of the following semicircular free equivalent
\begin{equation}\label{eq:amat1a0b0}
 H_{free} = A_0\otimes I + \sum_{i=1}^{l} A_i\otimes \s_i, 
 \end{equation}
with $\s_i$ being from a semicircular family.

 Let   $\lambda_i(\cdot)$ denote the $i$-th smallest eignevalue of its argument. One can then write
 \begin{equation}\label{eq:amat1a0b1}
 \lambda_1(H) \leq  \lambda_2(H) \leq \dots  \leq  \lambda_n(H) , 
 \end{equation}
 with $ \lambda_1(H)$ and  $ \lambda_n(H)$ being the smallest and the largest eigenvalues of $H$, respectively. Consider an array of matrices 
 \begin{equation}
 \label{eq:aseteq1}
 \cA \triangleq \{A_i\}_{i=0,1,\dots,l}. 
 \end{equation}
The above mentioned  Lehner's  deterministic spectral edge characterization of $H_{free}$ is given by  \cite{Lehner99}
\begin{eqnarray}\label{eq:amat1a0b2}
 \rho_n = \rho_n(\cA)  &  \triangleq & \inf_{Z\succeq 0 } \lambda_n\lp A_0\otimes I + Z+   \sum_{i=1}^{l} A_iZ^{-1}A_i\rp 
 \nonumber \\
 \rho_1= \rho_1(\cA) & \triangleq & \sup_{0\succeq Z } \lambda_1\lp A_0\otimes I + Z+   \sum_{i=1}^{l} A_iZ^{-1}A_i\rp  . 
 \end{eqnarray}

\subsection{Relevant prior work}
\label{sec:context1}

\noindent \underline{\emph{Symmetric scenarios -- asymptotic freeness:}} By the strong asymptotic freeness \cite{HaagThor05,Schultz05}, (\ref{eq:amat1a0b2}) also characterizes the $H$'s spectral edges. In particular, keeping in mind the above-mentioned spectrum closeness between $H$ and $H_{free}$, one has
 \begin{eqnarray}\label{eq:amat1a0b3}
\lim_{n\rightarrow \infty} \mE \lambda_1(H) = \rho_1 \quad \quad \mbox{and} \quad \quad
\lim_{n\rightarrow \infty} \mE \lambda_n(H) = \rho_n , 
 \end{eqnarray}
 with very strong concentrations as well. 

In fact, various types of connections between $\rho_1$ and $\rho_n$ and $\lambda_1(H)$ and $\lambda_n(H)$ have been proven in \cite{ChenGVTH26,Anderson13,bandfree23,bandfree24,HaagThor05,Schultz05,CollinsGP22} through several different analytical methodologies. They include standard free probability Stieltjes transform/matrix Dyson equations \cite{HaagThor05,Schultz05,CollinsGP22} and first order \cite{BorCollins19,BordColl24,Anderson13} based methods as well as matrix concentrations and interpolations \cite{CollinsGP22,bandfree23,bandfree24,ChenGVTH26}. (For studies particularly tailored for permutations/unitary ensembles, see \cite{BorCollins19,Magee25,Cassidy24,BordColl24,CollMale14}; for more general (including non-Gaussians) ensembles, see, e.g., \cite{GuionnShly09,BrailUniv24}).

\vspace{.15in}
\noindent \underline{\emph{Symmetric scenarios -- intrinsic freeness:}} In the regimes of our interest, concentrations are induced by large $n$. However, that is not necessarily required. Particularly relevant in this regard is the recent breakthrough by \cite{bandfree23,bandfree24}, which introduces the concept of \emph{intrinsic} freeness.

Unlike \cite{HaagThor05,CollinsGP22,Schultz05}, which consider the $n\rightarrow \infty$ regime, \cite{bandfree23,bandfree24} assume $n=1$ and establish upper and lower bounds on the spectral free equivalents. In doing so, they recognize the critical role that $\|\mbox{cov}(H)\|$ and $k$ play in concentrations. While the choice of $n=1$ might seem like a deviation from the block structure, it allows for the consideration of various options for the covariance of $H$ that frequently appear in practical applications \cite{bandfree23,bandfree24}. This approach is also particularly useful for matrices with generic variance profiles \cite{BandHand16, VanHandelSpec17, LatHandYouss18, CaiHanZhang22,Husson22,Ducatez24,HussMcKenna24,BrailUniv24,LeeLee24}.

A wide range of other intrinsic freeness applications is discussed in \cite{bandfree23,bandfree24}. We would specifically single out those associated with the algorithmic and information-theoretic properties of spiked models and sample covariances. The results in \cite{bandfree23,bandfree24} allow for the precise determination of key features, such as residual estimation errors and BBP phase transitions. These precise results stand in stark contrast to usual scaling-order characterizations \cite{KolLou17,Tropp11,VershNonAsym12,KanLovSim97, Bourgain99, Rudelson99, GianHarTso05, Paouris06, Adam10,Adamczak11}. For further details on alternative precise analytical methodologies, please see, for example, \cite{BBP05,MontRich14,DonGavJohn18,Lesetal17,PerryWB20,BarbMM17,PourBM24,BehneReeves22,PakKK23,Stojniccovmat26}.

\vspace{.15in}
\noindent \underline{\emph{Asymmetric scenarios:}} A majority of the above-mentioned freeness work is related to symmetric scenarios — specifically the one described by (\ref{eq:amat1a0b0}) with $A_i=A_i^T$. More closely related to some portions of our work would be the analogous asymmetric ones.

While simple algebraic transformations allow translations from symmetric to asymmetric scenarios, to the best of our knowledge, direct spectral methods analytical treatments of asymmetric scenarios resulting in formulations as elegant/useful as (\ref{eq:amat1a0b2}) seem to be scarce in the existing literature. A notable exception is \cite{ParHand25}, where the extreme singular values of a set of free operators associated with intrinsic freeness are characterized. One would then expect that similar results apply to asymptotic freeness, but we are unaware of the spectral methods works that formally and directly establish it.

We should also add that \cite{bandfree23} demonstrated that intrinsic freeness can be combined with asymptotic freeness, so that the results of \cite{Schultz05, HaagThor05} extend to a wide range of Gaussian matrices beyond standard normals (this is, however, typically stated in the squared context).

\vspace{.15in}
\noindent \underline{\emph{Different analytical approaches:}}  Finally, we should mention approaches that differ from the methods discussed above. Utilizing the Sudakov-Fernique \cite{Sudakov71,Fernique74,Fernique75,Vitale00} comparison technique and assuming $A_i\succeq 0,i>0$, \cite{CollYama26} showed that negligibly modified $\rho_1$ and $\rho_n$ are the lower and upper bounds on $\lim_{n\rightarrow \infty} \mE \lambda_1(H) $ and $\lim_{n\rightarrow \infty} \mE \lambda_n(H) $. A nice additional observation of \cite{CollYama26} regarding the orthogonal invariance actually allows to slightly expand the definite set $A_i\succeq 0,i>0$ into a set that fits within the Kraus decomposition.

In \cite{Stojnicleh26}, we proved (\ref{eq:amat1a0b3}) relying on comparison concepts as well. However, we leaned on the methodologies utilized within Random Duality Theory (RDT) \cite{StojnicCSetam09,StojnicRegRndDlt10,Stojniccovmat26} and proved for $A_i\succeq 0,i>0$ that $\rho_n$ is not only the $\lim_{n\rightarrow \infty} \mE\lambda_n(H) $'s upper bound, but also its exact value (the same then automatically translates to $\rho_1$ and $\lim_{n\rightarrow \infty} \mE \lambda_1(H) $).

\subsection{Our contributions}
\label{sec:context2}

We have further developed the RDT machinery, which allows us to make strong progress in several directions. More concretely, we have:
\begin{itemize}
 \item Symmetric Square Case: We first focus on the symmetric square case ($k_1=k_2=k$, $m=n$, and $A_i=A_i^T$), which corresponds to Lehner's formula from (\ref{eq:amat1a0b2}). Differently from \cite{Stojnicleh26}, where $A_i\succeq 0,i>0$ was assumed, we consider indefinite matrix coefficients (i.e., we do not assume a priori that $A_i\succeq 0,i>0$) and provide spectral edges lower bounds that match Lehner's formula (see Section \ref{sec:symmin}).
 \item Asymmetric Non-Square Variants: We then focus on the asymmetric non-square variants of Kronecker-Gaussian matrices (where $k_1\neq k_2$, $m\neq n$, and $A_i\neq A_i^T$ choices are allowed). While simple algebraic tricks allow translation from symmetric to nonsymmetric scenarios, our focus is on the results that the RDT methodology can achieve. In that regard, we establish RDT versions of the asymmetric analogues to Lehner formulas and prove that they lower-bound the spectral edges (see Section \ref{sec:asm}). Several particular scenarios are considered: (i)  Centered: Assumes $A_0=0$ (see Section \ref{sec:handlerdA0}). (ii)   Centered Deformed: Assumes both $A_0=0$ and deformation $A_D\otimes I + H^TH$, resembling Wishart deformations and sample covariances (see Section \ref{sec:defnqhandlerdA0}). (iii)  Non-Centered: Assumes $A_0\neq 0$ and corresponds to the full asymmetric analogue to the classical centered symmetric ensembles from \cite{HaagThor05,Schultz05} (see Section \ref{sec:nqhandlerdA0}).
  \item Interlaced Decoupling Property: While establishing Lehner's asymmetric analogues, we uncovered that, within the RDT, they exhibit a remarkable interlaced decoupling property (see Corollary \ref{cor:cor2a0}).
  \item SDP Formulations: Finally, we discuss elegant SDP (semi-definite programming) formulations associated with the obtained asymmetric analogues (see Sections \ref{sec:handlerdA0}, \ref{sec:defnqhandlerdA0}, and \ref{sec:nqhandlerdA0}). In several concrete scenarios, we solve the underlying SDPs and compare the obtained results with the corresponding ones achieved through numerical simulations. A striking agreement between the theory and simulations is observed already for problem dimensions on the order of a few hundreds (see Tables \ref{tab:tab00}, \ref{tab:tab01}, \ref{tab:tab02}, and \ref{tab:tab1}).
\end{itemize}

\section{Symmetric indefinite scenario}
\label{sec:symmin}

We first focus on matrices $H$ from (\ref{eq:amat1a0b0}). Since these matrices are symmetric and the underlying deterministic parts of Kronecker products, $A_i$, are additionally a priori indefinite, we call scenarios considered in this section \emph{symmetric indefinite}. It should be noted that this is in stark contrast with \cite{Stojnicleh26}, where $A_i, i \ge 1$, are assumed positive semi-definite (PSD).

\subsection{Transforming Kronecker--Gaussians into plain Gaussians}
\label{sec:kronrep}

Before discussing the RDT machinery, we introduce a few technical preliminaries. First, we observe that
\begin{eqnarray}\label{eq:lbkronrep1}
\lambda_n(H)  
 = 
 \max_{\x\in\mS^{k  n}}  \x^T H\x
  =\max_{\x\in\mS^{k n}} \x^T\lp A_0\otimes I + \frac{1}{\sqrt{n}} \sum_{i=1}^{l} A_i\otimes \bar{G}_i \rp \x, 
 \end{eqnarray}
where $\mS^{p}$ stands for the unit sphere in $\mR^{p}$, i.e., $\mS^{p}=\{\x\in\mR^{p} \hspace{.03in} | \hspace{.03in} \|\x\|_2=1 \}$. Let $\bar{X}\in\mR^{n\times k}$  be $n\times k$  matrix such that 
\begin{equation}\label{eq:lbkronrep2}
\x=\mbox{vec}(\bar{X}), 
\end{equation}
where $\mbox{vec}(\cdot)$ stacks the matrix argument columns (starting from the first to the last) into a vector. Taking $\bar{X}_i,i=1\dots,k$ as the $i$-th column of $\bar{X}$, we observe
\begin{eqnarray}\label{eq:lbkronrep3}
\|\x\|_2^2 & = & \|\mbox{vec}(\bar{X})\|_2^2 = \sum_{i=1}^{k}\|\bar{X}_i\|_2^2 = \tr(\bar{X}^T\bar{X}) , 
\end{eqnarray}
and
\begin{eqnarray}\label{eq:lbkronrep4}
\lambda_n(H) &  = &  \max_{\tr(\bar{X}^T\bar{X})=1} \lp \sum_{r=1}^n\sum_{s=1}^k \lp A_0\rp_{rs}\bar{X}_r^T\bar{X}_s + \frac{1}{\sqrt{n}} \sum_{i=1}^{l} \sum_{r=1}^n\sum_{s=1}^k \lp A_i\rp_{rs}\bar{X}_r^T \bar{G}_i \bar{X}_s \rp 
\nonumber \\
  & =  & \max_{\tr(\bar{X}^T\bar{X})= 1} \lp \tr(A_0 \bar{X}^T\bar{X}) + \frac{1}{\sqrt{n}} \sum_{i=1}^{l} \tr(A_i \bar{X}^T \bar{G}_i \bar{X} ) \rp 
  \nonumber \\
  & =  & \max_{\tr(\bar{X}^T\bar{X})= 1} \lp \tr(A_0 \bar{X}^T\bar{X}) + \frac{\sqrt{2}}{\sqrt{n}} \sum_{i=1}^{l} \tr(A_i \bar{X}^T G_i \bar{X} ) \rp ,
   \end{eqnarray}
where, for $i=0,1,\dots,l$, $\lp A_i\rp_{rs}$ is the element at the intersection of the $r$-th row and the $s$-th column of $A_i$. Practically speaking, (\ref{eq:lbkronrep4}) is a convenient way to transform the Kronecker--Gaussians into the plain Gaussians.  Below, we provide a characterization of  $\lambda_n(H)$ via  Random Duality Theory (RDT).

\subsection{Lower-bounding $\lambda_n(H)$ via RDT}
\label{sec:lbxirdt}

Below is a detailed discussion of the fundamental RDT principles outlined in \cite{StojnicCSetam09,StojnicRegRndDlt10,Stojniccovmat26,Stojnicleh26}. These principles relate to:
(i) Finding the underlying optimization algebraic representation; (ii) Determining the random dual; and
(iii) Handling the random dual (for more on additional upgrades and further algorithmic implications, please see, e.g., \cite{Stojnicalgbp25,Stojnicclupsk25}). In particular, we establish the connection of each of these principles to the problems of our interest here and address the associated intricacies.

\subsubsection{Finding the underlying optimization algebraic representation} 
\label{sec:lbrandpr}

To facilitate presentation, we first consider an ortho-diagonal transformation of (\ref{eq:kronrep4}). Let $X\in\mR^{n\times k} $ and  $R_X\in\mR^{k\times k} $ be such that $X^TX=I$ and $\tr(R_X^TR_X)=1$. A change of variables, $\bar{X} = XR_X$, gives  
\begin{eqnarray}\label{eq:lbinteq1ab0}
\bar{X}^T\bar{X}  =  R_X^TR_X, \quad \mbox{and}\quad 
\frac{1}{\sqrt{n}} \tr(A_i \bar{X}^T G_i \bar{X} ) =     \frac{1}{\sqrt{n}} \tr(A_i R_X^TX^T G_i XR_X ).
\end{eqnarray}
Utilizing eigen-decomposition allows to set 
\begin{equation}\label{eq:lbinteq1ab0b0}
F_i = F_i(R_X) = R_XA_iR_X^T =    U_i D_i U_i^T ,\quad \mbox{and} \quad U_iU_i^T=U_i^TU_i=I, 
\end{equation}
where $D_i$ is the diagonal matrix with $A_i$ eigenvalues (sorted in, say, increasing order) on the main diagonal. We find it useful to also set
\begin{equation}\label{eq:lbinteq1ab0b0a0}
|F_i| =   U_i |D_i| U_i^T , 
\end{equation}
where $|D_i|$ is matrix obtained after applying $|\cdot|$ to each of the elements of $D_i$. Keeping (\ref{eq:lbinteq1ab0b0}) in mind, (\ref{eq:lbkronrep4}) can be rewritten as
\begin{eqnarray}\label{eq:lbkronrep5}
\lambda_n(H) & =  & \max_{X^TX =I,\tr(R_X^TR_X)=1} \lp \tr(A_0R_X^TR_X) +  \frac{\sqrt{2}}{\sqrt{n}} \sum_{i=1}^{l}  \tr(A_i R_X^TX^T G_i XR_X ) \rp 
\nonumber \\
  & =  & \max_{\tr(R_X^TR_X)=1} \lp \tr(A_0R_X^TR_X) +  \max_{X^TX=I}  \frac{\sqrt{2}}{\sqrt{n}} \sum_{i=1}^{l}  \tr(A_i R_X^TX^T G_i XR_X ) \rp 
\nonumber \\
  & =  & \max_{\tr(R_X^TR_X)=1} \lp \tr(  U_0D_0U_0^T ) +  \max_{X^TX=I}  \frac{\sqrt{2}}{\sqrt{n}} \sum_{i=1}^{l}  \tr( X^T G_i X  U_iD_iU_i^T  ) \rp 
 \nonumber \\
  & =  & \max_{\tr(R_X^TR_X)=1} \lp \tr(F_0) +  \max_{X^TX=I}  \frac{\sqrt{2}}{\sqrt{n}} \sum_{i=1}^{l}  \tr(X^T G_i X F_i ) \rp 
\nonumber \\
  & =  & \max_{\tr(R_X^TR_X)=1} \lp \tr(F_0) + \frac{\sqrt{2}}{\sqrt{n}} \xi \rp  ,
\end{eqnarray}
where   
\begin{eqnarray}\label{eq:lbkronrep6}
 \xi  & \triangleq &  \max_{X^TX=I}   \sum_{i=1}^{l}  \tr(X^T G_i XF_i ) ,
\end{eqnarray}
 is within RDT context called \emph{random primal}.

\subsubsection{Determining the random dual} 
\label{sec:lbranddual}

The following theorem establishes the  \emph{(complementary) random dual} analogue.

\begin{theorem}
\label{thm:lbthm1}
Let $k_1=k_2=k,l\in\mN$ be fixed and let $m=n\in\mN$ be large such that. Let $G_{i}\in\mR^{n\times n}$, $G_{i}^{(1)}\in\mR^{n\times n}$, and $G_{i}^{(2)}\in\mR^{n\times n},i=1,\dots,l$ be independent  standard normal matrices. For deterministic symmetric $A_i\in\mR^{k\times k}$ and fixed $R_X\in\mR^{k\times k}$ such that $\tr(R_X^TR_X)=1$, let $F_{i}$ and $|F_{i}|$  be as in 
(\ref{eq:lbinteq1ab0b0}) and (\ref{eq:lbinteq1ab0b0a0}). Set
\begin{eqnarray}
   \label{eq:lbthm1eq1}
 L & = &  
 \frac{1}{\sqrt{2}}\max_{X\in\mR^{n\times k}, X^TX =I} \sum_{i=1}^{l} \lp  \tr( X^T G_i^{(1)} X R_XU_i | D_i| U_i^T R_X^T) +\tr(  X^T G_i^{(2)} X R_XU_i | D_i | U_i^T R_X^T)\rp 
 \nonumber \\
& = &  \frac{1}{\sqrt{2}}\max_{X\in\mR^{n\times k}, X^TX =I} \sum_{i=1}^{l} \lp  \tr( X^T G_i^{(1)} X |F_i|) +\tr(  X^T G_i^{(2)} X |F_i| )\rp  .
\end{eqnarray}
Let $\xi$ be defined through (\ref{eq:lbinteq1ab0}), (\ref{eq:lbinteq1ab0b0}), and  (\ref{eq:lbkronrep6}) .  One then has  
\begin{eqnarray}
   \label{eq:lbthm1eq2}
\lim_{n\rightarrow \infty}  \frac{1}{\sqrt{n}}  \mE \xi   \geq  \lim_{n\rightarrow \infty} \frac{1}{\sqrt{n}} \mE L,
\end{eqnarray}
with the righthand side being the (complementary) random dual.
\end{theorem}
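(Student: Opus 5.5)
The plan is a Sudakov--Fernique (Gaussian comparison) argument over the compact index set $\{X\in\mR^{n\times k}:X^TX=I\}$. We compare the two centered Gaussian processes
\begin{equation*}
\mathcal{X}_X \triangleq \sum_{i=1}^{l}\tr(X^TG_iXF_i), \qquad
\mathcal{Y}_X \triangleq \frac{1}{\sqrt{2}}\sum_{i=1}^{l}\lp\tr(X^TG_i^{(1)}X|F_i|)+\tr(X^TG_i^{(2)}X|F_i|)\rp,
\end{equation*}
so that $\xi=\max_X \mathcal{X}_X$ and $L=\max_X\mathcal{Y}_X$.

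First, we simplify $\mathcal{Y}$. Since $G_i^{(1)},G_i^{(2)}$ are independent standard normal matrices, $\frac{1}{\sqrt{2}}(G_i^{(1)}+G_i^{(2)})$ is again a standard normal matrix. Hence $\mathcal{Y}$ has the same law as $\sum_i \tr(X^TG_iX|F_i|)$. The two processes therefore differ only in replacing $F_i$ by $|F_i|$.

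Second, we compute covariances. For a (nonsymmetric) standard normal $G$ and matrices $M,N$, one has $\mE\, \tr(GM)\tr(GN)=\tr(MN^T)$. Take $X,Y$ in the index set and set $C=X^TY\in\mR^{k\times k}$. Using independence across $i$ and the symmetry of $F_i$, this gives
\begin{equation*}
\mE\,\mathcal{X}_X\mathcal{X}_Y=\sum_{i=1}^{l}\tr(F_iCF_iC^T),\qquad \mE\,\mathcal{Y}_X\mathcal{Y}_Y=\sum_{i=1}^{l}\tr(|F_i|C|F_i|C^T).
\end{equation*}
When $X=Y$ we have $C=I$, so both variances equal $\sum_i\tr(F_i^2)=\sum_i\tr(|F_i|^2)$. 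The incremental second moments are $\mE(\mathcal{X}_X-\mathcal{X}_Y)^2=\mathrm{Var}_X+\mathrm{Var}_Y-2\,\mathrm{Cov}$, and likewise for $\mathcal{Y}$. Since the variances agree, it suffices to show the covariance inequality $\tr(F_iCF_iC^T)\le\tr(|F_i|C|F_i|C^T)$ for every $C$.

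Third, we prove this inequality, which is the only nontrivial algebraic step. Split $F_i=P_i-N_i$ using the eigen-decomposition (\ref{eq:lbinteq1ab0b0}), where $P_i,N_i\succeq0$ are the positive and negative parts; then $|F_i|=P_i+N_i$. Expanding both sides, the difference is
\begin{equation*}
2\tr(P_iCN_iC^T)+2\tr(N_iCP_iC^T).
\end{equation*}
Each term is nonnegative, because $\tr(P_iCN_iC^T)=\tr\lp P_i^{1/2}CN_iC^TP_i^{1/2}\rp$ is the trace of a PSD matrix. Hence $\mE(\mathcal{X}_X-\mathcal{X}_Y)^2\ge\mE(\mathcal{Y}_X-\mathcal{Y}_Y)^2$ for all $X,Y$.

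Finally, the Sudakov--Fernique inequality yields $\mE\max_X\mathcal{X}_X\ge\mE\max_X\mathcal{Y}_X$, that is, $\mE\xi\ge\mE L$. To apply it on the uncountable index set, we use a countable dense subset; the processes have continuous paths and the Stiefel manifold is compact, so the maxima are attained and integrable. This holds for every $n$, so dividing by $\sqrt{n}$ and letting $n\rightarrow\infty$ gives (\ref{eq:lbthm1eq2}), assuming the limits exist; otherwise the same argument gives the statement with $\liminf$ on the left and $\limsup$ on the right. The main obstacle I expect is only the bookkeeping in the covariance computation, namely verifying that nonsymmetric $G_i$ yields exactly $\tr(F_iCF_iC^T)$ with the correct transpose placement. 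After that, the PSD-splitting argument settles the comparison.
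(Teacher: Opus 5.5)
Your proposal is correct and follows essentially the same route as the paper. Both arguments compare $\sum_i\tr(X^TG_iXF_i)$ with its $|F_i|$-counterpart through a Gaussian comparison inequality, using the same equal variances $\sum_i\tr(F_i^2)=\sum_i\tr(|F_i|^2)$ and the same covariances $\tr(F_iCF_iC^T)$ versus $\tr(|F_i|C|F_i|C^T)$ with $C=X^TY$; you invoke Sudakov--Fernique on increments where the paper uses Slepian's lemma, which is equivalent here, and you certify nonnegativity of the gap via the split $F_i=P_i-N_i$, giving $2\tr(P_iCN_iC^T)+2\tr(N_iCP_iC^T)\geq 0$, whereas the paper writes the same gap as a squared Frobenius norm in the eigenbasis of $F_i$.
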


\begin{proof} We consider the following  two centered Gaussian processes indexed by an array $\cX = \{X\}$
  \begin{align}
\label{eq:lbmr1}
 \cG (\cX)  \triangleq   \cG (X)    &\triangleq  \sum_{i=1}^{l}  \tr( X^T G_i X  U_iD_iU_I^T  )   
 = \sum_{i=1}^{l}  \tr( X^T G_i X F_i )  
  \nonumber   \\
 \cG_l (\cX) \triangleq   \cG_l (X)  & \triangleq \frac{1}{\sqrt{2}}  \sum_{i=1}^{l} \lp  \tr( X^T G_i^{(1)} X U_i | D_i| U_i^T) +\tr(  X^T G_i^{(2)} X U_i | D_i | U_i^T )\rp 
  \nonumber   \\
& =
  \frac{1}{\sqrt{2}}  \sum_{i=1}^{l} \lp  \tr( X^T G_i^{(1)} X  |F_i|) +\tr(  X^T G_i^{(2)} X |F_i| )\rp .
  \end{align}
For two arrays $\cX^{(1)}=\{ X^{(1)}\}$ and $\cX^{(2)}=\{ X^{(2)} \}$ with $ \lp X^{(i)}\rp^T X^{(i)}= I,i=1,2$,  we further write
  \begin{eqnarray}
\label{eq:lbmr2}
\mE \cG (\cX^{(1)})\cG (\cX^{(2)})   & =  &  \sum_{i=1}^{l}   \phi_i 
\nonumber   \\
\mE \cG_l (\cX^{(1)})\cG_l (\cX^{(2)})  & =  &  \frac{1}{2}\sum_{i=1}^{l} \theta_i,
  \end{eqnarray}
  where
  \begin{eqnarray}
\label{eq:lbmr2a0}
 \phi_i & = & 
\mE \lp \tr\lp \lp X^{(1)}\rp^T G_i X^{(1)} F_i \rp 
   \tr\lp  \lp X^{(2)}  \rp^T G_i X^{(2)} F_i \rp \rp 
\nonumber   \\
\theta_i & =  &  
\mE \lp \tr\lp \lp X^{(1)}\rp^T G_i^{(1)} X^{(1)} |F_i| \rp 
   \tr\lp  \lp X^{(2)}  \rp^T G_i^{(1)} X^{(2)} |F_i| \rp \rp 
\nonumber   \\
& &    +
\mE \lp \tr\lp \lp X^{(1)}\rp^T G_i^{(2)} X^{(1)} |F_i| \rp 
   \tr\lp  \lp X^{(2)}  \rp^T G_i^{(2)} X^{(2)} |F_i| \rp \rp 
 .
  \end{eqnarray}
Moreover,
  \begin{eqnarray}
\label{eq:lbmr2a1}
 \phi_i & = & 
\mE \lp \tr\lp \lp X^{(1)}\rp^T G_i X^{(1)} F_i \rp 
   \tr\lp  \lp X^{(2)}  \rp^T G_i X^{(2)} F_i \rp \rp 
\nonumber \\
 & = &
 \tr\lp   \lp X^{(2)} F_i   \lp X^{(2)}\rp^T  \rp^T  \lp X^{(1)} F_i   \lp X^{(1)}\rp^T  \rp  \rp
\nonumber \\
 & = &
 \tr\lp   X^{(2)} F_i \lp X^{(2)}\rp^T  X^{(1)} F_i   \lp X^{(1)}\rp^T   \rp
   \nonumber \\
   & = & 
 \tr\lp  \lp X^{(2)}\rp^TX^{(1)}  F_i\lp X^{(1)}\rp^T X^{(2)}  F_i   \rp 
    \nonumber \\
   & = & 
 \tr\lp  \lp X^{(2)}\rp^TX^{(1)} U_iD_iU_i^T\lp X^{(1)}\rp^T X^{(2)}  U_iD_iU_i^T  \rp 
     \nonumber \\
   & = & 
 \tr\lp  \lp X^{(2)}\rp^TX^{(1)} U_i |D_i| \mbox{sign}(D_i) U_i^T\lp X^{(1)}\rp^T X^{(2)}  U_i \mbox{sign}(D_i)| D_i| U_i^T  \rp 
      \nonumber \\
   & = & 
 \tr\lp  U_i^T   \lp X^{(2)}\rp^TX^{(1)} U_i |D_i| \mbox{sign}(D_i) U_i^T\lp X^{(1)}\rp^T X^{(2)}  U_i \mbox{sign}(D_i)| D_i|  \rp  
      \nonumber \\
   & = & 
 \tr\lp  U_i^T   W^T U_i |D_i| \mbox{sign}(D_i) U_i^T W U_i \mbox{sign}(D_i)| D_i|  \rp 
       \nonumber \\
   & = & 
 \tr\lp  U_i^T   W^T U_i \sqrt{|D_i|} \sqrt{|D_i|} \mbox{sign}(D_i) U_i^T W U_i \mbox{sign}(D_i)
  \sqrt{|D_i|} \sqrt{|D_i|}  \rp 
       \nonumber \\
   & = & 
 \tr\lp  \sqrt{|D_i|}  U_i^T   W^T U_i \sqrt{|D_i|} \sqrt{|D_i|} \mbox{sign}(D_i) U_i^T W U_i \mbox{sign}(D_i)
  \sqrt{|D_i|}  \rp 
 ,
    \end{eqnarray}
    where
  \begin{eqnarray}
\label{eq:lbmr2a1a0}
W = \lp X^{(1)}\rp^T X^{(2)} ,
    \end{eqnarray}    
    and $\mbox{sign}(\cdot)$ is applied componentwise with the convention $\mbox{sign}(0)=0$.  We also have
 \begin{align}
\label{eq:lbmr2a2}
\theta_i 
& =  
\mE \lp \tr\lp \lp X^{(1)}\rp^T G_i^{(1)} X^{(1)} |F_i| \rp 
   \tr\lp  \lp X^{(2)}  \rp^T G_i^{(1)} X^{(2)} |F_i| \rp \rp 
\nonumber   \\
&  \hspace{.15in}   +
\mE \lp \tr\lp \lp X^{(1)}\rp^T G_i^{(2)} X^{(1)} |F_i| \rp 
   \tr\lp  \lp X^{(2)}  \rp^T G_i^{(2)} X^{(2)} |F_i| \rp \rp 
\nonumber \\
& =  
 \tr\lp   \lp X^{(2)} | F_i|   \lp X^{(2)}\rp^T  \rp^T  \lp X^{(1)} | F_i |   \lp X^{(1)}\rp^T  \rp  \rp
  +
 \tr\lp   \lp X^{(2)} | F_i |   \lp X^{(2)}\rp^T  \rp^T  \lp X^{(1)} | F_i |   \lp X^{(1)}\rp^T  \rp  \rp
\nonumber \\
& =
 \tr\lp   X^{(2)} |F_i| \lp X^{(2)}\rp^T  X^{(1)} | F_i |  \lp X^{(1)}\rp^T   \rp
+ \tr\lp   X^{(2)} | F_i| \lp X^{(2)}\rp^T  X^{(1)} | F_i |   \lp X^{(1)}\rp^T   \rp
\nonumber \\
& =
 \tr\lp  \lp X^{(2)}\rp^TX^{(1)}  |F_i|\lp X^{(1)}\rp^T X^{(2)}  |F_i |   \rp 
+
 \tr\lp  \lp X^{(2)}\rp^TX^{(1)}  | F_i |\lp X^{(1)}\rp^T X^{(2)}  |F_i|   \rp 
\nonumber \\
& =
 \tr\lp  W^T  |F_i| W |F_i |   \rp 
+
 \tr\lp  W^T  | F_i | W  |F_i|   \rp 
\nonumber \\
& =   
 \tr\lp W^T  U_i |D_i|U_i^T W U_i |D_i|U_i^T  \rp 
+
 \tr\lp  W^T  U_i |D_i|U_i^TW  U_i |D_i|U_i^T   \rp 
\nonumber \\
& =   
 \tr\lp  W^T U_i \sqrt{|D_i|} \sqrt{|D_i|} U_i^T W U_i   \sqrt{|D_i|} \sqrt{|D_i|} U_i^T  \rp 
\nonumber \\
& \hspace{.15in}
+
 \tr\lp  W^T  U_i \mbox{sign}(D_i) \sqrt{|D_i|}\sqrt{|D_i|} \mbox{sign}(D_i) U_i^T
 W  U_i \mbox{sign}(D_i) \sqrt{|D_i|}\sqrt{|D_i|} \mbox{sign}(D_i) U_i^T   \rp 
\nonumber \\
& =   
 \tr\lp  \sqrt{|D_i|}  U_i^T W^T U_i \sqrt{|D_i|} \sqrt{|D_i|} U_i^T W U_i   \sqrt{|D_i|}   \rp 
\nonumber \\
& \hspace{.15in}
+
 \tr\lp  \sqrt{|D_i|} \mbox{sign}(D_i) U_i^T  W^T  U_i \mbox{sign}(D_i) \sqrt{|D_i|}\sqrt{|D_i|} \mbox{sign}(D_i) U_i^T
 W  U_i \mbox{sign}(D_i) \sqrt{|D_i|}  \rp 
  .
   \end{align}
    From (\ref{eq:lbmr2})--(\ref{eq:lbmr2a2}), one then finds
  \begin{align}
\label{eq:lbmr5}
\mE & \cG_l (\cX^{(1)})\cG_l (\cX^{(2)} ) -   \mE  \cG (\cX^{(1)})\cG (\cX^{(2)})
 =
\nonumber
\\
 & =  \frac{1}{2} \sum_{i=1}^{l} \theta_i -   \sum_{i=1}^{l}  \phi_i   
 =  \frac{1}{2} \sum_{i=1}^{l} \lp \theta_i - 2 \phi_i \rp   
\nonumber
\\
& = 
\frac{1}{2} \sum_{i=1}^{l} 
\Bigg ( \Bigg .
 \tr\lp  \sqrt{|D_i|}  U_i^T W^T U_i \sqrt{|D_i|} \sqrt{|D_i|} U_i^T W U_i   \sqrt{|D_i|}   \rp 
\nonumber \\
& \hspace{.15in}
+
 \tr\lp  \sqrt{|D_i|} \mbox{sign}(D_i) U_i^T  W^T  U_i \mbox{sign}(D_i) \sqrt{|D_i|}\sqrt{|D_i|} \mbox{sign}(D_i) U_i^T
 W  U_i \mbox{sign}(D_i) \sqrt{|D_i|}  \rp 
\nonumber \\
& \hspace{.15in} -
 2\tr\lp  \sqrt{|D_i|}  U_i^T   W^T U_i \sqrt{|D_i|} \sqrt{|D_i|} \mbox{sign}(D_i) U_i^T W U_i \mbox{sign}(D_i)
  \sqrt{|D_i|}  \rp 
\Bigg .\Bigg )
\nonumber
\\
& = 
\frac{1}{2} \sum_{i=1}^{l} 
\Bigg ( \Bigg .
 \tr
\Bigg ( \Bigg .  
  \lp \sqrt{|D_i|} U_i^T W U_i \sqrt{|D_i|} -\sqrt{|D_i|} \mbox{sign}(D_i) U_i^T W U_i \mbox{sign}(D_i)
  \sqrt{|D_i|}   \rp^T 
\nonumber \\
& \hspace{.15in}
\times
 \lp \sqrt{|D_i|} U_i^T W U_i \sqrt{|D_i|} -\sqrt{|D_i|} \mbox{sign}(D_i) U_i^T W U_i \mbox{sign}(D_i)
  \sqrt{|D_i|}   \rp
 \Bigg .\Bigg )
 \Bigg .\Bigg )
  \nonumber \\
&  \geq   0.  
 \end{align} 
We also note  
  \begin{align}
\label{eq:lbmr5a0}
\mE \cG_l (\cX^{(1)})\cG_l (\cX^{(1)} )
-
  \mE & \cG (\cX^{(1)})\cG (\cX^{(1)})
  = 
\nonumber
\\
& = 
\frac{1}{2} \sum_{i=1}^{l} 
\Bigg ( \Bigg .
 \tr
\Bigg ( \Bigg .  
  \lp \sqrt{|D_i|} U_i^T  U_i \sqrt{|D_i|} -\sqrt{|D_i|} \mbox{sign}(D_i) U_i^T  U_i \mbox{sign}(D_i)
  \sqrt{|D_i|}   \rp^T 
\nonumber \\
& \hspace{.15in}
\times
 \lp \sqrt{|D_i|} U_i^T  U_i \sqrt{|D_i|} -\sqrt{|D_i|} \mbox{sign}(D_i) U_i^T  U_i \mbox{sign}(D_i)
  \sqrt{|D_i|}   \rp
 \Bigg .\Bigg )
 \Bigg .\Bigg )
\nonumber \\
&  =   0,
  \end{align}
  where the second equality follows since  $\lp X^{(1)}\rp^T X^{(1)}=I$, and the third since $ U_i^T  U_i =I $ and $\mbox{sign}(D_i) \mbox{sign}(D_i) =I$. 
  
  To complete the proof, we recall Theorem 1.1 from \cite{Gordon85}. The part of the theorem utilized below was introduced in \cite{Slep62} and is known as Slepian's lemma. Both results can be deduced as special cases of the concepts discussed in Corollary 3 in \cite{Stojnicgscompyx16} and Corollary 4 in \cite{Stojnicgscomp16}.

\begin{theorem}(\cite{Gordon85,Slep62})
\label{thm:Gordonpos1} Let $X_{i}$ and $Y_{i}$, $1\leq i\leq n$, be two centered Gaussian processes which satisfy the following inequalities for all choices of indices
\begin{enumerate}
\item $\mE(X_{i}^2)=\mE(Y_{i}^2)$
\item $\mE(X_{i}X_{l})\leq \mE(Y_{i}Y_{l}), i\neq l$.
\end{enumerate}
 Then
\begin{equation*}
\mE(\min_{i} X_{i})\leq \mE(\min_i Y_{i}) \quad  \Longleftrightarrow \quad \mE(\max_{i} X_{i})\geq \mE(\max_i Y_{i}).
\end{equation*}
\end{theorem}

Applying  Theorem \ref{thm:Gordonpos1} to processes $\cG(\cdot)$ and  $\cG_l(\cdot)$ with correspondence $X\leftrightarrow\cG$ and $Y\leftrightarrow\cG_l$ gives
\begin{align}\label{eq:lbmt5a1a0}
&  &\mE \max_{\cX^{(a_1)}} \cG(\cX)  & \geq \mE \max_{\cX^{(a_1)}} \cG_l(\cX)
\nonumber \\
\Longleftrightarrow & & \mE \max_{X^TX=I} 
\sum_{i=1}^{l}  \tr( X^T G_i X F_i )  
 & \geq \frac{1}{\sqrt{2}}\mE \max_{X^TX=I} \sum_{i=1}^{l}  \lp  \tr( X^T G_i^{(1)} X |F_i| )
+  \tr( X^T G_i^{(2)} X |F_i| ) \rp .
\end{align} 
Connecting further (\ref{eq:lbkronrep6}) and (\ref{eq:lbmt5a1a0}), we then also find
\begin{eqnarray}\label{eq:lbmt5a1a1}
 \lim_{n\rightarrow \infty}   \frac{1}{\sqrt{n}}  \mE  \xi  & \geq &   \lim_{n\rightarrow \infty}  \frac{1}{\sqrt{2n}} \mE \max_{X^TX=I} \sum_{i=1}^{l}  \lp  \tr( X^T G_i^{(1)} X |F_i| )
+  \tr( X^T G_i^{(2)} X |F_i| ) \rp ,
 \end{eqnarray}
which, together with (\ref{eq:lbthm1eq1}), gives  (\ref{eq:lbthm1eq2}) and completes the proof.
\end{proof}

\begin{remark}
\label{rem:rem0}
To facilitate presentation throughout the paper, we focus on expectations. However, all the key quantities discussed above trivially concentrate in the considered dimensional regimes, and the stated results automatically hold in a probabilistic sense as well.

Additionally, to ensure clarity and elegance, the results are stated in the asymptotic regime ($n\rightarrow \infty$). With additional effort, they can be adapted to hold in a non-asymptotic sense as well.
\end{remark}

\subsubsection{Handling the random dual}
\label{sec:lbhandlerd}

We first observe 
\begin{equation} 
\label{eq:lbhrd0}
   \frac{1}{\sqrt{2}}\mE \max_{X^TX=I} 
\sum_{i=1}^{l} \lp  \tr( X^T G_i^{(1)} X | F_i | )  
+
 \tr( X^T G_i^{(2)} X | F_i | )
 \rp
 =  \mE \max_{X^TX=I} \sum_{i=1}^{l}  \tr( X^T G_i X |F_i| ) ,
 \end{equation}
 where the equality holds since  $G_i$ and $\frac{1}{\sqrt{2}}\lp G_i^{(1)} +G_i^{(2)}\rp$ are statistically equivalent. Combining (\ref{eq:lbmt5a1a1}) and (\ref{eq:lbhrd0}), we further find
\begin{eqnarray}\label{eq:lbhrd0a0}
 \lim_{n\rightarrow \infty}   \frac{1}{\sqrt{n}}  \mE  \xi  
 & \geq &   \lim_{n\rightarrow \infty}  \frac{1}{\sqrt{2n}} \mE \max_{X^TX=I} \sum_{i=1}^{l}  \lp  \tr( X^T G_i^{(1)} X |F_i| )
+  \tr( X^T G_i^{(2)} X |F_i| ) \rp 
\nonumber \\
 & = &   \lim_{n\rightarrow \infty}  \frac{1}{\sqrt{n}} \mE \max_{X^TX=I} \sum_{i=1}^{l}  \tr( X^T G_i X |F_i| ) 
 \nonumber \\
 & = &   \lim_{n\rightarrow \infty}  \frac{1}{\sqrt{n}} \mE \xi_l ,
 \end{eqnarray}
 where
\begin{eqnarray}\label{eq:lbhrd0a0a0}
\xi_l \triangleq  \max_{X^TX=I} \sum_{i=1}^{l}  \tr( X^T G_i X |F_i| ).
\end{eqnarray} 
Since $|F_i|\succeq 0,i=1,\dots,k$, one can then utilize results of \cite{Stojnicleh26}. In particular, we have the following theorem.

\begin{theorem}
\label{thm:lbthm1a0} Assume the setup of Theorem \ref{thm:lbthm1} and
let $G_{i}^{(3)}\in\mR^{k\times n},i=1,\dots,l$ be a random matrix independent of all other randomness and comprised of independent standard normals. Set
\begin{eqnarray}
   \label{eq:lbthm1eq1a0}
 L_l & = &  
   \max_{X\in\mR^{n\times k}, X^TX =I} \sum_{i=1}^{l} \sqrt{2} \tr\lp G_i^{(3)} X |F_i| \rp   .
\end{eqnarray}
Let $\xi_l$ be as in (\ref{eq:lbhrd0a0a0}).  One then has  
\begin{eqnarray}
   \label{eq:lbthm1eq2}
\lim_{n\rightarrow \infty}  \frac{1}{\sqrt{n}}  \mE \xi  
 \geq 
 \lim_{n\rightarrow \infty}  \frac{1}{\sqrt{n}}  \mE \xi_l
  = \lim_{n\rightarrow \infty} \frac{1}{\sqrt{n}} \mE L_l.
\end{eqnarray}
 \end{theorem}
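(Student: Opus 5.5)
The inequality part of the statement needs no new argument. It is exactly (\ref{eq:lbhrd0a0}), which was obtained by combining Theorem \ref{thm:lbthm1} (via Slepian's lemma, Theorem \ref{thm:Gordonpos1}) with the distributional identity (\ref{eq:lbhrd0}). The substance is therefore the equality $\lim \frac{1}{\sqrt{n}}\mE\xi_l=\lim\frac{1}{\sqrt{n}}\mE L_l$. Since $|F_i|=R_XU_i|D_i|U_i^TR_X^T\succeq 0$, this is precisely the PSD setting treated in \cite{Stojnicleh26}. The plan is to reproduce that argument for the fixed PSD coefficients $|F_i|$; a direct citation also suffices.

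\textbf{Lower bound $\lim\frac{1}{\sqrt n}\mE\xi_l\ge\lim\frac{1}{\sqrt n}\mE L_l$.} I will compare two Gaussian processes indexed by $X$ with $X^TX=I$:
\[
\cG(X)=\sum_i \tr(X^TG_iX|F_i|), \qquad \cG_3(X)=\sqrt{2}\sum_i\tr(G_i^{(3)}X|F_i|).
\]
With $W=(X^{(1)})^TX^{(2)}$, their covariances are
\[
\sum_i\tr(W^T|F_i|W|F_i|) \quad\text{and}\quad 2\sum_i\tr(|F_i|W^T|F_i|)\ \bigl(=2\sum_i\tr(W^T|F_i|^2)\bigr).
\]
The Slepian/Gordon conditions then need two things. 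First, the variances must match, and they do not exactly: one gets $\sum_i\tr(|F_i|)^2$ versus $2\sum_i\tr(|F_i|^2)$, so I will add a scalar correction $\|X\|\cdot g$-type term. Second, the cross-covariance difference must have a sign; it reduces to a trace of a square using $|F_i|\succeq0$ and $\|W\|\le 1$.

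The exact bookkeeping is what \cite{Stojnicleh26} does, typically through Gordon's min-max with an auxiliary vector $\mathbf{h}$ and the Lagrangian of $X^TX=I$. The scalar correction terms are of order $O(1)$, hence vanish after the $\frac{1}{\sqrt n}$ normalization.

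\textbf{Upper bound $\lim\frac{1}{\sqrt n}\mE\xi_l\le\lim\frac{1}{\sqrt n}\mE L_l$.} This is the delicate direction. I will follow \cite{Stojnicleh26}: write $\xi_l$ through its Lagrangian dual with a multiplier $Z\succeq0$ for $X^TX=I$ and use the strong duality available there. Concentration is then handled as in Remark \ref{rem:rem0}. Evaluating $L_l$ explicitly,
\[
L_l=\sqrt2\,\Bigl\|\sum_i G_i^{(3)T}|F_i|\Bigr\|_*\approx\sqrt{2n}\,\tr\Bigl(\bigl(\textstyle\sum_i|F_i|^2\bigr)^{1/2}\Bigr),
\]
I match this against the value produced by the dual, which yields the Lehner-type $\inf_Z$ expression.

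The main obstacle is this matching upper bound, i.e., showing that the comparison is asymptotically tight. I would rely verbatim on the corresponding result of \cite{Stojnicleh26}, whose hypotheses ($|F_i|\succeq0$, fixed $k,l$, $n\to\infty$) are satisfied here.
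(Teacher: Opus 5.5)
Your top-level plan matches the paper's proof: the inequality is (\ref{eq:lbhrd0a0}), and the equality is imported from \cite{Stojnicleh26} once one notes $|F_i|\succeq 0$. Taken as a citation, your proof is therefore correct. The self-contained reconstruction you sketch, however, has its two directions swapped, so it cannot replace that citation.

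On $X^TX=I$ the variance of $\cG$ is $\sum_i\tr(|F_i|^2)$; your $\sum_i\tr(|F_i|)^2$ should read this. Equalizing variances therefore means adding $g\,(\sum_i\tr(|F_i|^2))^{1/2}$ to $\cG$. Call the result $\tilde{\cG}$ and take $W=(X^{(1)})^TX^{(2)}$. One gets
\[
\mE\tilde{\cG}(X^{(1)})\tilde{\cG}(X^{(2)})-\mE\cG_3(X^{(1)})\cG_3(X^{(2)})=\sum_{i=1}^{l}\bigl\||F_i|^{1/2}W|F_i|^{1/2}-|F_i|\bigr\|_F^2\geq 0 .
\]
By Theorem \ref{thm:Gordonpos1}, the process with the larger covariances has the smaller expected maximum. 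This comparison therefore yields $\mE\xi_l\leq\mE L_l$, i.e. the \emph{upper} bound. Variance matching pins the additive constant, so the covariance gap is this same sum of squares for any scalar corrections. Hence no Slepian comparison of this type can produce $\xi_l\gtrsim L_l$. Already for $k=l=1$ (with $|F_1|=1$), that direction is the semicircle-edge lower bound $\mE\lambda_{\max}(\frac{1}{2}(G+G^T))\geq(1-o(1))\sqrt{2n}$, which a comparison against $\sqrt{2}\,h^Tx$ does not give.

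Yet $\lim_{n\rightarrow\infty}\frac{1}{\sqrt{n}}\mE\xi_l\geq\lim_{n\rightarrow\infty}\frac{1}{\sqrt{n}}\mE L_l$ is precisely the half this theorem must deliver. It is what turns the chain $\xi\gtrsim\xi_l\gtrsim L_l$ into the lower bound (\ref{eq:lbhrd9a0}) on $\lambda_n(H)$; the direction $\xi_l\lesssim L_l$ plays no role there. What the paper takes from \cite{Stojnicleh26} is its exactness result for PSD coefficients. There the comparison gives an upper bound on the edge, and a further argument shows the bound is attained. That exactness is the delicate part.

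Your Lagrangian paragraph does not supply it either. For a maximization over $X^TX=I$, weak duality again bounds $\xi_l$ from above. A lower bound would need strong duality for this nonconvex Stiefel problem and a matching lower estimate of the random dual value, and you provide neither. So the direction you call delicate is the easy one, and the genuinely delicate one rests entirely on the citation. The reconstruction as described would not prove the theorem.
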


\begin{proof}
The inequality in (\ref{eq:lbthm1eq2}) follows from (\ref{eq:lbhrd0a0}). On the other hand, the equality
follows automatically through Theorem 1, Corollary 1, and Theorem 5 from \cite{Stojnicleh26}, after one recognizes that $|F_i|\succeq 0$.
\end{proof}

Following derivation (26)-(35) from \cite{Stojnicleh26} allows to write
 \begin{eqnarray}
   \label{eq:lbhrd9}
\lim_{n\rightarrow\infty} \frac{1}{\sqrt{n}} \mE L_l   
& = &
\min_{\Gamma=\Gamma^T} \bar{L}_l,
\end{eqnarray}
where
\begin{eqnarray}
   \label{eq:lbhrd9b0}
 \bar{L}_l \triangleq    
\frac{1}{\sqrt{2}} \lp \tr\lp\lp \sum_{i=1}^{l}
| F_i | | F_i |^T \rp\Gamma^{-1} \rp + \tr(\Gamma)\rp
=
\frac{1}{\sqrt{2}} \lp \tr\lp\lp \sum_{i=1}^{l}
 F_i  F_i ^T \rp\Gamma^{-1} \rp + \tr(\Gamma)\rp,
\end{eqnarray}
and the last equality holds since
\begin{eqnarray}
   \label{eq:lbhrd9b0a0}
| F_i | | F_i |^T = U_i |D_i|U_i^TU_i |D_i|U_i^T =U_i |D_i|^2U_i^T = U_i D_i^2 U_i^T =
U_i D_i D_iU_i^T = U_i D_i U_iU_i^T  D_iU_i^T =F_iF_i^T.
\end{eqnarray}
A combination of  (\ref{eq:lbkronrep5}), (\ref{eq:lbthm1eq2}), and  (\ref{eq:lbhrd9}) gives
\begin{eqnarray}\label{eq:lbhrd9a0}
 \lim_{n\rightarrow \infty}\mE\lambda_n(H) & =  & \max_{\tr(R_X^TR_X)=1} \lp \tr(F_0) +  \sqrt{2}\lim_{n\rightarrow \infty} \frac{1}{\sqrt{n}} \mE \xi \rp  
\nonumber \\
& \geq & 
 \max_{\tr(R_X^TR_X)=1} \lp \tr(F_0) +  \min_{\Gamma=\Gamma^T} \lp \tr\lp\lp \sum_{i=1}^{l}
F_iF_i^T \rp \Gamma^{-1} \rp + \tr(\Gamma) \rp \rp .
\end{eqnarray}
Solving the inner minimization gives optimal $\Gamma$
\begin{eqnarray}\label{eq:lbhrd9a0a0}
\tilde{\Gamma} =  \sqrt{\sum_{i=1}^{l} F_iF_i^T} , 
\end{eqnarray} 
and
\begin{eqnarray}\label{eq:lbhrd9a1}
 \lim_{n\rightarrow \infty}\mE\lambda_n(H)  
& \geq & 
  \max_{\tr(R_X^TR_X)=1} \lp \tr(A_0R_X^TR_X) +  2 \tr \sqrt{ \sum_{i=1}^{l}
( R_XA_iR_X^T )( R_XA_iR_X^T )^T } \rp  
\nonumber \\
&  = & 
 \max_{\tr(R_X^TR_X)=1} \lp \tr(A_0R_X^TR_X) +  2\tr\sqrt{ \sum_{i=1}^{l}
 A_iR_X^TR_X A_iR_X^TR_X } \rp 
 \nonumber \\
&  = & 
 \max_{S=S^T\succeq 0, \tr(S)=1} \lp \tr(A_0S) +  2 \tr\sqrt{ \sum_{i=1}^{l}
 (A_i S)^2 } \rp 
 \nonumber \\
&  = & 
\rho_n ,
\end{eqnarray}
where the last equality is obtained in Proposition 3.3 in \cite{CollYama26} (see, also (38) in \cite{Stojnicleh26}, Theorem 1.9 in \cite{Kunisky26}, and \cite{bandfree23,bandfree24} for related results).  Since the above recovers $\rho_n$, one has that the Lehner formula is effectively a lower bound for spectral edges in the symmetric indefinite scenario (as stated earlier, based on spectral characterizations \cite{HaagThor05,CollinsGP22,Schultz05}, this bound is actually tight; for PSD scenario tightness is proven via RDT in \cite{Stojnicleh26} as well).

\section{Asymmetric scenario}
\label{sec:asm}

In this section, we focus on the asymmetric scenario. This means that instead of $H$ from (\ref{eq:amat1a0b0}), we consider 
\begin{equation}\label{eq:asmamat1a0b0}
 H = A_0\otimes I + \frac{1}{\sqrt{n}} \sum_{i=1}^{l} A_i\otimes G_i, 
 \end{equation}
where $G_i$ are comprised of independent standard normals. In the asymmetric scenario, matrices are not necessarily square. To account for this, we take $A_i \in \mathbb{R}^{k_1 \times k_2}$ and formally write $G_i \in \mathbb{R}^{m \times n}$. However, for (\ref{eq:asmamat1a0b0}) to hold, we assume $m=n$ for the time being (specifically whenever $A_0 \neq 0$). Scenarios where $m \neq n$ will be explicitly emphasized throughout the presentation. Additionally, since $H$ is now asymmetric, we will consider its leading singular value unless otherwise stated.

Before proceeding, it is useful to note that simple algebraic maneuvering allows for an automatic translation of symmetric results into asymmetric ones. For example, expanding $A_i$ to $\bar{A}_i$
 \begin{equation}\label{eq:symmasymm1}
  \bar{A}_i = \begin{bmatrix}
        0 & A_i & A_i^T & 0 
      \end{bmatrix},
\end{equation} 
 and properly adjusting $G_i$ for $m=n$, one obtains symmetric scenarios to which the results from the previous section and \cite{Stojnicleh26} immediately apply. However, our focus in this work is to explore the degree to which the RDT methodology extends. Along those lines, our primary interest is not just determining asymmetric analogues, but rather checking whether that can be accomplished via the RDT.

\subsection{Plain Gaussians instead of Kronecker--Gaussians}
\label{sec:kronrep}

Following the above guidelines, we proceed by relying on the RDT machinery. To ensure presentation smoothness, we try to parallel the exposition of the previous sections and start by introducing a few technical preliminaries.

Analogously to (\ref{eq:amat1a0b1}), let function $\s_i(\cdot)$ determine the $i$-th smallest singular value of its argument. One then  has
 \begin{equation}\label{eq:asmamat1a0b1}
 \s_1(H) \leq  \s_2(H) \leq \dots  \leq  \s_n(H) , 
 \end{equation}
 where $ \s_1(H)$ and  $ \s_n(H)$ are the smallest and the largest singular values of $H$, respectively. We then observe
\begin{eqnarray}\label{eq:kronrep1}
s_n(H) & = & \sqrt{\lambda_n(H^TH)}  
\nonumber \\
& = &
 \sqrt{\max_{\x\in\mS^{k_2  n}}  \x H^TH\x}
\nonumber \\
& = &
 \max_{\x\in\mS^{k_2 n}}  \|H\x \|_2
\nonumber \\
& = &
 \max_{\x\in\mS^{k_2 n}}  \max_{\y\in\mS^{k_1 m}}  \y^T H\x 
\nonumber \\
& = & 
 \max_{\x\in\mS^{k_2 n},\y\in\mS^{k_1 m}} \y^T\lp A_0\otimes I + \frac{1}{\sqrt{n}} \sum_{i=1}^{l} A_i\otimes G_i \rp \x .
 \end{eqnarray}
 Let $\bar{X}\in\mR^{n\times k_2}$ and $\bar{Y}\in\mR^{m\times k_1}$ be $n\times k_2$ and $m\times k_1$  matrices such that 
\begin{equation}\label{eq:kronrep2}
\x=\mbox{vec}(\bar{X}) \quad \mbox{and} \quad 
\y=\mbox{vec}(\bar{Y}). 
\end{equation}
 Also, let $\bar{X}_i,i=1\dots,k_2$, $\bar{Y}_i,i=1\dots,k_1$ be the $i$-th columns of $\bar{X}$ and $\bar{Y}$, respectively. We then first have
\begin{eqnarray}\label{eq:kronrep3}
\|\x\|_2^2 & = & \|\mbox{vec}(\bar{X})\|_2^2 = \sum_{i=1}^{k_2}\|\bar{X}_i\|_2^2 = \tr(\bar{X}^T\bar{X})
\nonumber \\
\|\y\|_2^2 & = & \|\mbox{vec}(\bar{Y})\|_2^2 = \sum_{i=1}^{k_1}\|\bar{Y}_i\|_2^2 = \tr(\bar{Y}^T\bar{Y}) , 
\end{eqnarray}
and
\begin{eqnarray}\label{eq:kronrep4}
\s_n(H) &  = &  \max_{\tr(\bar{X}^T\bar{X})=\tr(\bar{Y}^T\bar{Y})=1} \lp \sum_{r=1}^{k_1}\sum_{s=1}^{k_2} \lp A_0\rp_{rs}\bar{Y}_r^T\bar{X}_s + \frac{1}{\sqrt{n}} \sum_{i=1}^{l} \sum_{r=1}^{k_1}\sum_{s=1}^{k_2} \lp A_i\rp_{rs}\bar{Y}_r^T G_i \bar{X}_s \rp 
\nonumber \\
  & =  & \max_{\tr(\bar{X}^T\bar{X})=\tr(\bar{Y}^T\bar{Y})=1} \lp \tr(A_0^T \bar{Y}^T\bar{X}) + \frac{1}{\sqrt{n}} \sum_{i=1}^{l} \tr(A_i^T \bar{Y}^T G_i \bar{X} ) \rp ,
   \end{eqnarray}
where, for $i=0,1,\dots,l$, $\lp A_i\rp_{rs}$ is the element at the intersection of the $r$-th row and the $s$-th column of $A_i$. (\ref{eq:kronrep4}) is practically a convenient representation of the Kronecker--Gaussians via plain Gaussians.  Below, we focus on an RDT characterization of  $\s_n(H)$.

\subsection{Lower-bounding $\s_n(H)$ via RDT}
\label{sec:xirdt}

Mimicking the presentation of Section \ref{sec:xirdt}, we will separately discuss the key RDT principles and how they relate to the problems of interest here.

\subsubsection{Finding underlying optimization algebraic representation} 
\label{sec:randpr}

Utilizing  ortho-diagonal transformations of (\ref{eq:kronrep4}), we let $X\in\mR^{n\times k_2} $ and  $R_X\in\mR^{k_2\times k_2} $ be such that $X^TX=I$ and $\tr(R_X^TR_X)=1$. Analogously, we consider $Y\in\mR^{m\times k_1} $ and  $R_Y\in\mR^{k_1\times k_1} $ such that $Y^TY=I$ and $\tr(R_Y^TR_Y)=1$. Changing variables, $\bar{X} = XR_X$ and $\bar{Y} = YR_Y$, we obtain  
\begin{eqnarray}\label{eq:inteq1ab0}
\bar{X}^T\bar{X}  =  R_X^TR_X, \quad\quad 
\bar{Y}^T\bar{Y} = R_Y^TR_Y, \quad \mbox{and}\quad 
\frac{1}{\sqrt{n}} \tr(A_i^T \bar{Y}^T G_i \bar{X} ) =     \frac{1}{\sqrt{n}} \tr(A_i^T R_Y^TY^T G_i XR_X ).
\end{eqnarray}
After adjusting $F_i$ definition from (\ref{eq:lbinteq1ab0b0}) so that it fits the asymmetric scenario, we have
\begin{equation}\label{eq:inteq1ab0b0}
F_i = F_i(R_X,R_Y) = R_XA_i^TR_Y^T =V_iS_iU_i^T , \quad \mbox{and} \quad V_i^TV_i=U_i^TU_i = I,
\end{equation}
where $S_i$ is a diagonal matrix with singular values of $F_i$ on the main diagonal. We also find it useful to set
\begin{equation}\label{eq:inteq1ab0b0a0}
F_i = F_{i,X}F_{i,Y}^T  \quad \mbox{with} \quad 
F_{i,X} = V_i\sqrt{S_i}\quad \mbox{and}\quad
F_{i,Y} = U_i\sqrt{S_i}.
\end{equation}
We then rewrite (\ref{eq:kronrep4}) as
\begin{eqnarray}\label{eq:kronrep5}
\s_n(H) & =  & \max_{X^TX=Y^TY=I,\tr(R_X^TR_X)=\tr(R_Y^TR_Y)=1} \lp \tr(A_0^T R_Y^TY^TX R_X) +  \frac{1}{\sqrt{n}} \sum_{i=1}^{l}  \tr(A_i^T R_Y^TY^T G_i XR_X ) \rp 
 \nonumber \\
  & =  & \max_{X^TX=Y^TY=I,\tr(R_X^TR_X)=\tr(R_Y^TR_Y)=1} \lp \tr(A_0^TR_Y^T Y^TX  R_X) +  \frac{1}{\sqrt{n}} \sum_{i=1}^{l}  \tr( Y^T G_i X F_i) \rp 
 \nonumber \\
  & =  & \max_{\tr(R_X^TR_X)=\tr(R_Y^TR_Y)=1} \lp  \frac{1}{\sqrt{n}}\xi_a \rp  ,
\end{eqnarray}
where   
\begin{eqnarray}\label{eq:kronrep6}
\varphi = \sqrt{n} \tr(A_0^TR_Y^T Y^TX  R_X)\quad \mbox{and} \quad  \xi_a  & \triangleq &  \max_{X^TX=Y^TY=I} \lp \varphi +   \sum_{i=1}^{l}  \tr(Y^T G_i X F_i ) \rp,
\end{eqnarray}
 is the so-called \emph{random primal}.

\subsubsection{Determining the random dual} 
\label{sec:randdual}

The following theorem establishes the \emph{(complementary) random dual} analogue.

\begin{theorem}
\label{thm:thm1}
Let $k_1,k_2,l\in\mN$ be fixed and let $n,m\in\mN$ be large such that $\alpha=\lim_{n\rightarrow \infty} \frac{m}{n}$ remains constant. let $G_{i}\in\mR^{m\times n}$, $G_{i}^{(1)}\in\mR^{k_1\times m}$, and $G_{i}^{(2)}\in\mR^{k_2\times n},i=1,\dots,l$ be independent  standard normal matrices. For deterministic $A_i\in\mR^{k_1\times k_2}$  and fixed $R_X\in\mR^{k_2\times k_2}$ and $R_Y\in\mR^{k_1\times k_1}$ such that $\tr(R_X^TR_X)=\tr(R_Y^TR_Y)=1$, let $F_{i}$, $F_{i,X}$, and $F_{i,Y}$  be as in 
(\ref{eq:inteq1ab0b0}) and (\ref{eq:inteq1ab0b0a0}). Set $\varphi = \sqrt{n} \tr(A_0^TR_Y^T Y^TX  R_X) $ and 
\begin{equation}
   \label{eq:thm1eq1}
 L_a  =   
\max_{X^TX=Y^TY=I} 
 \lp \varphi +
 \frac{1}{\sqrt{2}} 
 \sum_{i=1}^{l} \lp  \tr(Y^T G_i^{(1)} Y F_{i,Y} F_{i,Y}^T ) +\tr(X^T G_i^{(2)} X F_{i,X} F_{i,X}^T ) \rp 
  \rp.
\end{equation}
Let $\xi_a$ be defined through (\ref{eq:inteq1ab0}), (\ref{eq:inteq1ab0b0}), and  (\ref{eq:kronrep6}).  One then has  
\begin{eqnarray}
   \label{eq:thm1eq2}
\lim_{n\rightarrow \infty}  \frac{1}{\sqrt{n}}  \mE \xi_a   \geq  \lim_{n\rightarrow \infty} \frac{1}{\sqrt{n}} \mE L_a,
\end{eqnarray}
with the righthand side being the (complementary) random dual.
\end{theorem}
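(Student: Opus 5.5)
The plan is to mimic the proof of Theorem \ref{thm:lbthm1}: compare two Gaussian processes indexed by the pairs $(X,Y)$ with $X^TX=I$ and $Y^TY=I$, and apply Theorem \ref{thm:Gordonpos1}. Here $\varphi$ is a common deterministic shift. I will read the dimensions of $G_i^{(1)},G_i^{(2)}$ as $m\times m$ and $n\times n$, which is what makes $\tr(Y^TG_i^{(1)}Y\,\cdot)$ and $\tr(X^TG_i^{(2)}X\,\cdot)$ well defined. Set
\begin{equation*}
\cG(X,Y)=\sum_{i=1}^{l}\tr(Y^TG_iXF_i),\qquad
\cG_l(X,Y)=\frac{1}{\sqrt{2}}\sum_{i=1}^{l}\lp \tr(Y^TG_i^{(1)}YF_{i,Y}F_{i,Y}^T)+\tr(X^TG_i^{(2)}XF_{i,X}F_{i,X}^T)\rp .
\end{equation*}
Both processes are centered. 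Then $\xi_a=\max(\varphi+\cG)$ and $L_a=\max(\varphi+\cG_l)$, with the same $\varphi(X,Y)$ in both.

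The first step is the covariance computation. For two index points, let $W_X=(X^{(1)})^TX^{(2)}$ and $W_Y=(Y^{(1)})^TY^{(2)}$. Writing $\tr(Y^TG_iXF_i)=\langle G_i,YF_i^TX^T\rangle$ and using $F_i=F_{i,X}F_{i,Y}^T$, the primal cross-covariance becomes
\begin{equation*}
\phi_i=\tr\lp F_iW_Y^TF_i^TW_X\rp=\tr\lp B_{i,X}B_{i,Y}^T\rp,\qquad B_{i,X}=F_{i,X}^TW_XF_{i,X},\quad B_{i,Y}=F_{i,Y}^TW_YF_{i,Y},
\end{equation*}
up to the precise placement of transposes, which I will fix when writing it out.

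For the dual, I use the symmetric-Gaussian covariance formula already used in (\ref{eq:lbmr2a2}), i.e.\ one term with the plain Gaussian. This gives
\begin{equation*}
\tfrac12\lp\|B_{i,Y}\|_F^2+\|B_{i,X}\|_F^2\rp .
\end{equation*}
Hence the dual minus the primal covariance is $\tfrac12\sum_i\|B_{i,X}-B_{i,Y}\|_F^2\ge 0$. This is the analogue of (\ref{eq:lbmr5}).

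On the diagonal, $W_X=W_Y=I$, so $B_{i,X}=F_{i,X}^TF_{i,X}=S_i=F_{i,Y}^TF_{i,Y}=B_{i,Y}$. The variances therefore coincide, as in (\ref{eq:lbmr5a0}).

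The next step is to invoke Theorem \ref{thm:Gordonpos1} with $X\leftrightarrow\cG$ and $Y\leftrightarrow\cG_l$. This needs care because of the shift $\varphi$. The cleanest route is Kahane's interpolation, which underlies Theorem \ref{thm:Gordonpos1}. Adding the same deterministic mean function to both processes does not change the interpolation derivative: it is still $\tfrac12\sum_{a\ne b}(\mE X_aX_b-\mE Y_aY_b)\,\partial^2_{ab}f$ with $\partial^2_{ab}\le 0$ for the soft-max. So equal variances and dominated cross-covariances give
\begin{equation*}
\mE\max(\varphi+\cG)\ge\mE\max(\varphi+\cG_l).
\end{equation*}
The index sets are compact Stiefel products, so I will apply this on finite $\epsilon$-nets and pass to the limit by continuity of the processes. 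The final step is to divide by $\sqrt n$ and take $n\to\infty$, which gives (\ref{eq:thm1eq2}).

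I expect the main obstacle to be bookkeeping rather than conceptual. The first point is getting the transposes in $\phi_i$ right so that it matches exactly the cross term of the perfect square $\tfrac12\|B_{i,X}-B_{i,Y}\|_F^2$. The second is justifying the comparison with a nonzero common mean $\varphi$, which is only a mild extension of the quoted Slepian statement.
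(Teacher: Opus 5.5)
Your proposal is correct and follows the paper's proof essentially step for step. You use the same pair of processes and the same covariance identity, with the dual-minus-primal covariance equal to $\tfrac12\sum_i\|F_{i,X}^TW_XF_{i,X}-F_{i,Y}^TW_YF_{i,Y}\|_F^2\ge 0$ and vanishing on the diagonal, followed by a Slepian-type comparison. Your reading of $G_i^{(1)},G_i^{(2)}$ as $m\times m$ and $n\times n$ is also what the paper's covariance computation implicitly uses. The only difference is how the common deterministic shift $\varphi$ is handled: you use a soft-max Kahane interpolation on finite nets, while the paper simply appeals to concentration and a probabilistic variant of Theorem \ref{thm:Gordonpos1} (or its own comparison results). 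Your argument is a valid, self-contained version of that step.
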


\begin{proof} We consider two centered Gaussian processes indexed by an array $\cX = \{X,Y\}$
  \begin{align}
\label{eq:mr1}
 \cG (\cX)  & =   \cG (X,Y)   =  \sum_{i=1}^{l}  \tr( Y^T G_i X F_i )    \nonumber   \\
 \cG_l (\cX) & =   \cG_l (X,Y)   =  \frac{1}{\sqrt{2}}  \sum_{i=1}^{l}   \lp  \tr( Y^T G_i^{(1)} Y F_{i,Y} F_{i,Y}^T ) + \tr( X^T G_i^{(2)} X F_{i,X} F_{i,X}^T )\rp  .
  \end{align}
 For two arrays $\cX^{(1)}=\{ X^{(1)}, Y^{(1)}\}$ and $\cX^{(2)}=\{ X^{(2)}, Y^{(2)}\}$ with $ \lp X^{(i)}\rp^T X^{(i)}= \lp Y^{(i)}\rp^T Y^{(i)}=I,i=1,2$,  we have
  \begin{eqnarray}
\label{eq:mr2}
\mE \cG (\cX^{(1)})\cG (\cX^{(2)})   & =  &  \sum_{i=1}^{l}  \phi_i  
\nonumber   \\
\mE \cG_l (\cX^{(1)})\cG_l (\cX^{(2)})  & =  &  \frac{1}{2} \sum_{i=1}^{l} \theta_i,
  \end{eqnarray}
  where
  \begin{eqnarray}
\label{eq:mr2a0}
 \phi_i & = & 
\mE \lp \tr\lp \lp Y^{(1)}\rp^T G_i X^{(1)} F_i \rp 
   \tr\lp  \lp Y^{(2)}  \rp^T G_i X^{(2)} F_i \rp \rp 
\nonumber   \\
\theta_i & =  & 
 \mE \lp \tr\lp \lp Y^{(1)}\rp^T G_i Y^{(1)} F_{i,Y} F_{i,Y}^T \rp 
   \tr\lp  \lp Y^{(2)}  \rp^T G_i Y^{(2)} F_{i,Y} F_{i,Y}^T \rp \rp 
\nonumber   \\
& & +
 \mE \lp \tr\lp \lp X^{(1)}\rp^T G_i X^{(1)} F_{i,X} F_{i,X}^T \rp 
   \tr\lp  \lp X^{(2)}  \rp^T G_i X^{(2)} F_{i,X} F_{i,X}^T \rp \rp 
 .
  \end{eqnarray}
We further find
  \begin{eqnarray}
\label{eq:mr2a1}
 \phi_i & = & 
\mE \lp \tr\lp \lp Y^{(1)}\rp^T G_i X^{(1)} F_i \rp 
   \tr\lp  \lp Y^{(2)}  \rp^T G_i X^{(2)} F_i \rp \rp 
\nonumber \\
 & = &
 \tr\lp   \lp X^{(2)} F_i   \lp Y^{(2)}\rp^T  \rp^T  \lp X^{(1)} F_i   \lp Y^{(1)}\rp^T  \rp  \rp
\nonumber \\
 & = &
 \tr\lp   Y^{(2)} F_i^T \lp X^{(2)}\rp^T  X^{(1)} F_i   \lp Y^{(1)}\rp^T   \rp
   \nonumber \\
   & = & 
 \tr\lp  \lp X^{(2)}\rp^TX^{(1)}  F_i\lp Y^{(1)}\rp^TY^{(2)}  F_i^T   \rp 
   \nonumber \\
   & = & 
 \tr\lp  \lp X^{(2)}\rp^TX^{(1)}  F_{i,X}F_{i,Y}^T\lp Y^{(1)}\rp^TY^{(2)}  F_{i,Y}F_{i,X}^T   \rp 
    \nonumber \\
   & = & 
 \tr\lp  F_{i,X}^T \lp X^{(2)}\rp^TX^{(1)}  F_{i,X}F_{i,Y}^T\lp Y^{(1)}\rp^TY^{(2)}  F_{i,Y}   \rp 
     \nonumber \\
   & = & 
 \tr\lp  F_{i,X}^T W_X F_{i,X}F_{i,Y}^T W_Y^T F_{i,Y}   \rp ,
    \end{eqnarray}
    where
  \begin{eqnarray}
\label{eq:mr2a1a0}    
    W_X = \lp X^{(2)}\rp^TX^{(1)}  \quad \mbox{and}\quad
     W_Y = \lp Y^{(2)}\rp^TY^{(1)} .
    \end{eqnarray}
Also, one has
 \begin{eqnarray}
\label{eq:mr2a2}
\theta_i & =  &  
 \mE \lp \tr\lp \lp Y^{(1)}\rp^T G_i Y^{(1)} F_{i,Y}F_{i,Y}^T \rp 
   \tr\lp  \lp Y^{(2)}  \rp^T G_i Y^{(2)} F_{i,Y}F_{i,Y}^T \rp \rp 
\nonumber   \\
& & +
 \mE \lp \tr\lp \lp X^{(1)}\rp^T G_i X^{(1)} F_{i,X}F_{i,X}^T \rp 
   \tr\lp  \lp X^{(2)}  \rp^T G_i X^{(2)} F_{i,X}F_{i,X}^T \rp \rp 
\nonumber \\
&  = & 
 \tr\lp   \lp Y^{(2)} F_{i,Y}F_{i,Y}^T  \lp Y^{(2)}\rp^T  \rp^T  \lp Y^{(1)} F_{i,Y}F_{i,Y}^T  \lp Y^{(1)}\rp^T  \rp  \rp
\nonumber \\
& & +
 \tr\lp   \lp X^{(2)} F_{i,X}F_{i,X}^T   \lp X^{(2)}\rp^T  \rp^T  \lp X^{(1)} F_{i,X}F_{i,X}^T   \lp X^{(1)}\rp^T  \rp  \rp
\nonumber \\
&  = &    \tr\lp   Y^{(2)} F_{i,Y}F_{i,Y}^T \lp Y^{(2)}\rp^T  Y^{(1)} 
F_{i,Y}F_{i,Y}^T
   \lp Y^{(1)}\rp^T   \rp
\nonumber \\
& & +
\tr\lp   X^{(2)}F_{i,X}F_{i,X}^T \lp X^{(2)}\rp^T  X^{(1)} F_{i,X}F_{i,X}^T   \lp X^{(1)}\rp^T   \rp
\nonumber \\
&  = &  
 \tr\lp  F_{i,Y}^T \lp Y^{(2)}\rp^T Y^{(1)}  F_{i,Y} F_{i,Y}^T \lp Y^{(1)}\rp^TY^{(2)}  F_{i,Y}   \rp 
\nonumber \\
& &  +
  \tr\lp  F_{i,X}^T \lp X^{(2)}\rp^TX^{(1)}  F_{i,X} F_{i,X}^T \lp X^{(1)}\rp^TX^{(2)} F_{i,X}   \rp 
  \nonumber \\
&  = &  
 \tr\lp  F_{i,Y}^T W_Y  F_{i,Y} F_{i,Y}^T W_Y^T  F_{i,Y}   \rp 
  +
  \tr\lp  F_{i,X}^T W_X  F_{i,X} F_{i,X}^T W_X^T F_{i,X}   \rp.
   \end{eqnarray}
    From (\ref{eq:mr2})--(\ref{eq:mr2a2}), one then finds
  \begin{align}
\label{eq:mr5}
& \mE  \cG_l (\cX^{(1)})\cG_l (\cX^{(2)} )
-
  \mE  \cG (\cX^{(1)})\cG (\cX^{(2)})
 =
\nonumber
\\
 & =    \frac{1}{2}\sum_{i=1}^{l} \theta_i -  \sum_{i=1}^{l} \phi_i
=    \frac{1}{2}  \sum_{i=1}^{l} \lp \theta_i -2 \phi_i \rp 
\nonumber \\
\\
& = 
  \frac{1}{2}  \sum_{i=1}^{l} 
 \lp  \tr\lp  F_{i,Y}^T W_Y  F_{i,Y} F_{i,Y}^T W_Y^T  F_{i,Y}   \rp 
  +
  \tr\lp  F_{i,X}^T W_X  F_{i,X} F_{i,X}^T W_X^T F_{i,X}   \rp
  -
2 \tr\lp  F_{i,X}^T W_X F_{i,X}F_{i,Y}^T W_Y^T F_{i,Y}   \rp 
\rp
  \nonumber
\\
 & = 
  \frac{1}{2}  \sum_{i=1}^{l} 
 \tr\lp   
 \lp F_{i,X}^T W_X  F_{i,X}  - F_{i,Y}^T W_Y  F_{i,Y}    \rp^T
\lp F_{i,X}^T W_X  F_{i,X}  - F_{i,Y}^T W_Y  F_{i,Y}    \rp
\rp
  \nonumber \\
&  \geq   0.  
 \end{align}
We also note  
  \begin{align}
\label{eq:mr5a0}
\mE \cG_l (\cX^{(1)})\cG_l (\cX^{(1)} )
-
  \mE & \cG (\cX^{(1)})\cG (\cX^{(1)})
    = 
  \frac{1}{2}  \sum_{i=1}^{l} 
 \tr\lp   
 \lp F_{i,X}^T   F_{i,X}  - F_{i,Y}^T   F_{i,Y}    \rp^T
\lp F_{i,X}^T   F_{i,X}  - F_{i,Y}^T   F_{i,Y}    \rp
\rp
  =   0,
  \end{align}
  where the first equality follows since  $\lp X^{(1)}\rp^T X^{(1)}=I$ and $\lp Y^{(1)}\rp^T Y^{(1)}=I$. The last equality follows since $F_{i,X}^T F_{i,X}  = \sqrt{S_i}V_i^T V_i\sqrt{S_i}=S_i=\sqrt{S_i}U_i^T U_i\sqrt{S_i}=F_{i,Y}^T F_{i,Y}$.
  
Relying on concentrations and applying a probabilistic variant of Theorem \ref{thm:Gordonpos1} (or alternatively results of \cite{Stojnicgscompyx16,Stojnicgscomp16}) to processes $\cG(\cdot)$ and  $\cG_l(\cdot)$ with correspondence $X\leftrightarrow\cG$ and $Y\leftrightarrow\cG_l$, we find that
\begin{equation}\label{eq:mt5a1a0}
\mE \max_{\cX^{(a_1)}} \cG(\cX)   \geq \mE \max_{\cX^{(a_1)}} \cG_l(\cX) 
\end{equation}
implies 
\begin{multline}\label{eq:mt5a1a0a0}
 \mE \max_{\substack{X^TX=I\\Y^TY=I }}
\lp \varphi +\sum_{i=1}^{l}  \tr( Y^T G_i X F_i )  \rp
\\
  \geq 
 \mE \max_{\substack{X^TX=I\\Y^TY=I }} 
\lp \varphi + \frac{1}{\sqrt{2}}  \sum_{i=1}^{l}   \lp  \tr( Y^T G_i^{(1)} Y F_{i,Y} F_{i,Y}^T ) + \tr( X^T G_i^{(2)} X F_{i,X} F_{i,X}^T )\rp \rp
  .
 \end{multline}
Connecting further (\ref{eq:kronrep6}) and (\ref{eq:mt5a1a0}), we then also find
\begin{equation}\label{eq:mt5a1a1}
 \lim_{n\rightarrow \infty}   \frac{1}{\sqrt{n}}  \mE  \xi_a   \geq    \lim_{n\rightarrow \infty}  \frac{1}{\sqrt{n}}  \mE \max_{X^TX=Y^TY=I} 
\lp \varphi + \frac{1}{\sqrt{2}}  \sum_{i=1}^{l}   \lp  \tr( Y^T G_i^{(1)} Y F_{i,Y} F_{i,Y}^T ) + \tr( X^T G_i^{(2)} X F_{i,X} F_{i,X}^T )\rp \rp
,
 \end{equation}
which, together with (\ref{eq:thm1eq1}), gives  (\ref{eq:thm1eq2}) and completes the proof.
\end{proof}

\subsubsection{Handling the random dual}
\label{sec:handlerd}

We separately discuss three scenarios: (i) $A_0=0$; (ii) Deformed $A_0=0$; and (iii) $A_0\neq0$.

\subsubsubsection{Centered scenario ($A_0=0$)}
\label{sec:handlerdA0}

In the centered scenario we have $A_0=0$ and the deterministic shift by $A_0\otimes I$ is no longer present. Since $A_0=0$ we also have $\varphi = 0$. This then allows to  observe 
 \begin{eqnarray}\label{eq:hrd0a0}
 \lim_{n\rightarrow \infty}   \frac{1}{\sqrt{n}}  \mE  \xi_a  
 & \geq &    \lim_{n\rightarrow \infty}  \frac{1}{\sqrt{n}}  \mE \max_{X^TX=Y^TY=I} 
\frac{1}{\sqrt{2}}  \sum_{i=1}^{l}   \lp  \tr( Y^T G_i^{(1)} Y F_{i,Y} F_{i,Y}^T ) + \tr( X^T G_i^{(2)} X F_{i,X} F_{i,X}^T )\rp 
\nonumber \\
& \geq &    \lim_{n\rightarrow \infty}  \frac{1}{\sqrt{n}}  \mE \max_{Y^TY=I} 
\frac{1}{\sqrt{2}}  \sum_{i=1}^{l}    \tr( Y^T G_i^{(1)} Y F_{i,Y} F_{i,Y}^T ) 
\nonumber \\
& & + 
\lim_{n\rightarrow \infty}  \frac{1}{\sqrt{n}}  \mE \max_{X^TX = I} 
\frac{1}{\sqrt{2}}  \sum_{i=1}^{l}  \tr( X^T G_i^{(2)} X F_{i,X} F_{i,X}^T ) 
  \nonumber \\
 & = &   \lim_{n\rightarrow \infty}  \frac{1}{\sqrt{n}} \mE \xi_{l,Y} 
 + \lim_{n\rightarrow \infty}  \frac{1}{\sqrt{n}} \mE \xi_{l,X},
 \end{eqnarray}
 where
\begin{eqnarray}\label{eq:hrd0a0a0}
\xi_{l,Y} \triangleq  \max_{Y^TY=I} 
\frac{1}{\sqrt{2}}  \sum_{i=1}^{l}    \tr( Y^T G_i^{(1)} Y F_{i,Y} F_{i,Y}^T ) 
\quad \mbox{and}\quad 
\xi_{l,X} \triangleq  \max_{X^TX=I} 
\frac{1}{\sqrt{2}}  \sum_{i=1}^{l}    \tr( X^T G_i^{(2)} X F_{i,X} F_{i,X}^T ) 
.
\end{eqnarray} 
Since $F_{i,X}F_{i,X}^T\succeq 0$ and $F_{i,Y}F_{i,Y}^T\succeq 0,i=1,\dots,k$, we can again utilize results of \cite{Stojnicleh26}. In particular, we have the following analogue to Theorem \ref{thm:lbthm1a0}.

\begin{theorem}
\label{thm:thm1a0} Assume the setup of Theorem \ref{thm:thm1} with $A_0=0$. For $m,n\in\mN$ such that $\alpha=\lim_{n\rightarrow \infty} \frac{m}{n}$, let $G_{i}^{(4)}\in\mR^{k_1\times m}$ and $G_{i}^{(5)}\in\mR^{k_2\times n},i=1,\dots,l$ be two matrices comprised of independent standard normals. Moreover, let $G_{i}^{(4)}$ and $G_{i}^{(5)}$ be independent of all other randomness. Set
\begin{eqnarray}
   \label{eq:thm1a0eq1a0}
 L_{l,Y} & = &  
   \max_{Y\in\mR^{m\times k_1}, Y^TY =I} \sum_{i=1}^{l}  \tr\lp G_i^{(4)} Y F_{i,Y}F_{i,Y}^T \rp   \nonumber \\
    L_{l,X} & = &  
   \max_{X\in\mR^{n\times k_2}, X^TX =I} \sum_{i=1}^{l}  \tr\lp G_i^{(5)} X F_{i,X}F_{i,X}^T \rp      .
\end{eqnarray}
Let $\xi_{l,Y}$ and $\xi_{l,X}$ be as in (\ref{eq:hrd0a0a0}).  One then has  
\begin{eqnarray}
   \label{eq:thm1a0eq2}
\lim_{n\rightarrow \infty}  \frac{1}{\sqrt{n}}  \mE \xi_a  
 \geq 
 \lim_{n\rightarrow \infty}  \frac{1}{\sqrt{n}}  \mE \xi_{l,Y}
+
 \lim_{n\rightarrow \infty}  \frac{1}{\sqrt{n}}  \mE \xi_{l,X}
  = \lim_{n\rightarrow \infty} \frac{1}{\sqrt{n}} \mE L_{l,Y}
  +
  \lim_{n\rightarrow \infty} \frac{1}{\sqrt{n}} \mE L_{l,X}.
\end{eqnarray}
 \end{theorem}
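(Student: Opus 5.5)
The proof splits into the inequality and the equality in (\ref{eq:thm1a0eq2}), mirroring the proof of Theorem \ref{thm:lbthm1a0}.

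\emph{Inequality.} This is already contained in (\ref{eq:hrd0a0}). Since $A_0=0$, we have $\varphi=0$, so Theorem \ref{thm:thm1} gives $\lim \frac{1}{\sqrt{n}}\mE\xi_a\geq \lim\frac{1}{\sqrt{n}}\mE L_a$. In $L_a$ the $Y$-dependent and $X$-dependent parts of the objective involve disjoint variables and independent randomness ($G_i^{(1)}$ versus $G_i^{(2)}$). Hence the joint maximum over $(X,Y)$ equals the sum of the separate maxima, which is $\xi_{l,Y}+\xi_{l,X}$, and linearity of expectation then gives the first relation. Strictly, the decoupling is an equality, so the second ``$\geq$'' in (\ref{eq:hrd0a0}) is harmless.

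\emph{Equality.} I would treat the two pieces separately and identically; take $\xi_{l,X}$. Set $B_i\triangleq F_{i,X}F_{i,X}^T=V_iS_iV_i^T\succeq 0$. Then $\xi_{l,X}=\frac{1}{\sqrt{2}}\max_{X^TX=I}\sum_i\tr(X^TG_i^{(2)}XB_i)$, where $G_i^{(2)}$ is a square $n\times n$ standard normal matrix. This matches the object $\xi_l$ in (\ref{eq:lbhrd0a0a0}) with $|F_i|$ replaced by the PSD matrix $B_i$, up to the factor $\frac{1}{\sqrt{2}}$. We may therefore invoke, exactly as in the proof of Theorem \ref{thm:lbthm1a0}, Theorem 1, Corollary 1 and Theorem 5 of \cite{Stojnicleh26}. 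These state that for PSD coefficients the quadratic random primal has the same scaled expectation as its linear random dual $\max_{X^TX=I}\sum_i\sqrt{2}\,\tr(G_i^{(3)}XB_i)$. Multiplying by $\frac{1}{\sqrt{2}}$ gives $\lim\frac{1}{\sqrt{n}}\mE\xi_{l,X}=\lim\frac{1}{\sqrt{n}}\mE\max_{X^TX=I}\sum_i\tr(G_i^{(5)}XB_i)=\lim\frac{1}{\sqrt{n}}\mE L_{l,X}$, with $G_i^{(5)}$ playing the role of $G_i^{(3)}$.

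For $\xi_{l,Y}$ the same argument applies with $B_i=F_{i,Y}F_{i,Y}^T=U_iS_iU_i^T\succeq0$, dimension $m$ in place of $n$, and $k_1$ in place of $k_2$. The matrix in the quadratic form is $G_i^{(1)}$; as the quadratic form requires, I read it as $m\times m$, despite the dimensions printed in Theorem \ref{thm:thm1}. The only additional point is the normalization. The results of \cite{Stojnicleh26} are stated with $\frac{1}{\sqrt{m}}$ for an $m$-dimensional problem. Since $\frac{1}{\sqrt{n}}=\sqrt{\alpha}\,\frac{1}{\sqrt{m}}(1+o(1))$, rescaling both sides by the same factor $\sqrt{\alpha}$ preserves the equality.

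\emph{Main obstacle.} I expect the difficulty to be in checking that the hypotheses of \cite{Stojnicleh26} are met. Those results are proved for $G_i$ entering through the symmetrized $\bar G_i$ or plain $G_i$ with fixed $k$, PSD coefficients, and fixed deterministic $B_i$. Here $B_i$ depends on the fixed $R_X,R_Y$ but not on $n$, so it is indeed deterministic and fixed. One also needs $\tr(X^TGXB)=\tr(X^T\frac{G+G^T}{2}XB)$, so that plain versus symmetric $G$ only changes the variance by a factor. This factor must be consistent with the $\sqrt{2}$ appearing in (\ref{eq:lbthm1eq1a0}); tracking it carefully is the point I would verify first, by comparing covariances of the linear dual process $\tr(G^{(5)}XB)$ with those used in \cite{Stojnicleh26}.
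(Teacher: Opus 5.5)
Your proposal is correct and takes essentially the paper's route. The inequality is (\ref{eq:hrd0a0}): Theorem \ref{thm:thm1} with $\varphi=0$, where the joint maximum splits exactly into $\xi_{l,Y}+\xi_{l,X}$. The equality is the PSD result of \cite{Stojnicleh26}, applied as in Theorem \ref{thm:lbthm1a0} with $F_{i,X}F_{i,X}^T$ and $F_{i,Y}F_{i,Y}^T$ in place of $|F_i|$. Your bookkeeping is also right and spells out what the paper's one-line proof leaves implicit: the $1/\sqrt{2}$ factor is consistent (so the worry in your last paragraph is already settled by Theorem \ref{thm:lbthm1a0}), the $\sqrt{m/n}\to\sqrt{\alpha}$ rescaling is correct, and $G_i^{(1)}$ should indeed be read as $m\times m$.
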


\begin{proof}
The inequality  follows from (\ref{eq:hrd0a0}). The equality follows from Theorem \ref{thm:lbthm1} (and ultimately \cite{Stojnicleh26}), after one recognizes that $F_{i,X}F_{i,X}^T\succeq 0$ and $F_{i,Y}F_{i,Y}^T\succeq 0$.
\end{proof}

\begin{remark}
  \label{rem:rem1} Since $A_0=0$, one can now allow $m\neq n$ (i.e., $\alpha\neq 1$) scenario as well.
\end{remark}

Following once again derivation (26)-(35) from \cite{Stojnicleh26}, we can write
 \begin{eqnarray}
   \label{eq:hrd9}
\lim_{n\rightarrow\infty} \frac{1}{\sqrt{n}} \mE L_{l,Y}   
 = 
\min_{\Gamma_Y=\Gamma_Y^T} \bar{L}_{l,Y}\quad
\mbox{and}\quad
\lim_{n\rightarrow\infty} \frac{1}{\sqrt{n}} \mE L_{l,X}   
 = 
\min_{\Gamma_X=\Gamma_X^T} \bar{L}_{l,X}
\end{eqnarray}
where
\begin{eqnarray}
   \label{eq:hrd9b0}
 \bar{L}_{l,Y} & \triangleq  &  
\frac{1}{2} \lp \alpha \tr\lp\lp \sum_{i=1}^{l}
\lp F_{i,Y} F_{i,Y}^T \rp^2\rp\Gamma_Y^{-1} \rp + \tr(\Gamma_Y)\rp
\nonumber \\
& = & 
\frac{1}{2} \lp \alpha \tr\lp\lp \sum_{i=1}^{l}
 F_{i}^T F_{i} \rp\Gamma_Y^{-1} \rp + \tr(\Gamma_Y)\rp
\nonumber \\
 \bar{L}_{l,X} & \triangleq  &  
\frac{1}{2} \lp \tr\lp\lp \sum_{i=1}^{l}
\lp F_{i,X} F_{i,X}^T \rp^2\rp\Gamma_X^{-1} \rp + \tr(\Gamma_X)\rp 
\nonumber \\
& = & 
\frac{1}{2} \lp \tr\lp\lp \sum_{i=1}^{l}
 F_{i} F_{i}^T \rp\Gamma_X^{-1} \rp + \tr(\Gamma_X)\rp ,
 \end{eqnarray}
 where we utilized
\begin{eqnarray}
   \label{eq:hrd9b0a0}
\lp F_{i,Y} F_{i,Y}^T \rp^2 = F_{i,Y} F_{i,Y}^TF_{i,Y} F_{i,Y}^T =
U_i\sqrt{S_i}\sqrt{S_i}U_i^T U_i\sqrt{S_i}\sqrt{S_i}U_i^T 
=U_iS_i^2U_i^T =U_iS_iV_i^TV_iS_iU_i^T =F_i^TF_i
 \end{eqnarray}
and
\begin{eqnarray}
   \label{eq:hrd9b0a1}
\lp F_{i,X} F_{i,X}^T \rp^2 = F_{i,X} F_{i,X}^TF_{i,X} F_{i,X}^T =
V_i\sqrt{S_i}\sqrt{S_i}V_i^T V_i\sqrt{S_i}\sqrt{S_i}V_i^T 
=V_iS_i^2V_i^T =V_iS_iU_i^TU_iS_iV_i^T =F_iF_i^T.
 \end{eqnarray}

Combining  (\ref{eq:kronrep5}), (\ref{eq:thm1eq2}), and  (\ref{eq:hrd9}), we arrive at 
\begin{eqnarray}\label{eq:hrd9a0}
 \lim_{n\rightarrow \infty}\mE\s_n(H) & =  & \max_{\tr(R_X^TR_X)=R_Y^TR_Y)=1} \lp    \lim_{n\rightarrow \infty} \frac{1}{\sqrt{n}} \mE \xi_a \rp  
\nonumber \\
& \geq & 
\frac{1}{2} \max_{\tr(R_X^TR_X)=\tr(R_Y^TR_Y)=1} 
 \Bigg ( \Bigg . 
   \min_{\Gamma_Y=\Gamma_Y^T} \lp \alpha \tr\lp\lp \sum_{i=1}^{l}
F_{i}^TF_{i}\rp \Gamma_Y^{-1} \rp + \tr(\Gamma_Y) \rp 
\nonumber \\
& & 
+ 
  \min_{\Gamma_X=\Gamma_X^T} \lp \tr\lp\lp \sum_{i=1}^{l}
F_{i}F_{i}^T \rp \Gamma_X^{-1} \rp + \tr(\Gamma_X) \rp 
\Bigg . \Bigg ) .
\end{eqnarray}
Solving the inner minimizations gives optimal $\Gamma_Y$ and $\Gamma_X$
\begin{eqnarray}\label{eq:hrd9a0a0}
\tilde{\Gamma}_Y =  \sqrt{\alpha\sum_{i=1}^{l} F_{i}^TF_{i}} \quad \mbox{and}\quad
\tilde{\Gamma}_X =  \sqrt{\sum_{i=1}^{l} F_{i}F_{i}^T} , 
\end{eqnarray} 
and
\begin{eqnarray}\label{eq:hrd9a1}
 \lim_{n\rightarrow \infty}\mE\s_n(H)  
& \geq & 
  \max_{\tr(R_X^TR_X)=\tr(R_Y^TR_Y)=1} \lp   \sqrt{\alpha}  \tr \sqrt{ \sum_{i=1}^{l}
F_i^TF_i } +   \tr \sqrt{ \sum_{i=1}^{l}
F_iF_i^T } \rp  
\nonumber \\
& \geq & 
  \max_{\substack{\tr(R_X^TR_X)=1\\ \tr(R_Y^TR_Y)=1}} \Bigg ( \Bigg .
   \sqrt{\alpha} \tr \sqrt{ \sum_{i=1}^{l}
( R_XA_i^TR_Y^T )^T( R_XA_i^TR_Y^T ) } 
 +  \tr \sqrt{ \sum_{i=1}^{l}
( R_XA_i^TR_Y^T )( R_XA_i^TR_Y^T )^T } \Bigg . \Bigg )  
\nonumber \\
&  = & 
  \max_{\substack{\tr(R_X^TR_X)=1\\ \tr(R_Y^TR_Y)=1}}\Bigg ( \Bigg . 
  \sqrt{\alpha}  \tr\sqrt{ \sum_{i=1}^{l}
A_iR_X^TR_X  A_i^TR_Y^TR_Y }
 +
  \tr\sqrt{ \sum_{i=1}^{l}
 A_i^TR_Y^TR_Y A_iR_X^TR_X } \Bigg .\Bigg )
 \nonumber \\
&  = & 
 \max_{\substack{S_Y=S_Y^T\succeq 0,S_X=S_X^T\succeq 0 \\
  \tr(S_Y)= \tr(S_X)=1} }\lp  \sqrt{\alpha} \tr\sqrt{ \sum_{i=1}^{l}
 (A_i S_X)(A_i^T S_Y) }  +  \tr\sqrt{ \sum_{i=1}^{l}
 (A_i^T S_Y)(A_i S_X) }  \rp .
\end{eqnarray}
This is effectively an asymmetric analogue to (38) in \cite{Stojnicleh26} (see also, Proposition 3.3 in \cite{CollYama26}, Theorem 1.9 in \cite{Kunisky26}, and \cite{bandfree23,bandfree24} for related results).   

The above allows to formulate the following corollary of Theorem \ref{thm:thm1a0}.
\begin{corollary}
\label{cor:cor1a0} Assume the setup of Theorem \ref{thm:thm1a0} (effectively the setup of Theorem \ref{thm:thm1} with $A_0=0$ and $\alpha=\lim_{n\rightarrow \infty} \frac{m}{n}$ not necessarily equal to $1$). Then 
\begin{eqnarray}\label{eq:cor1eq1}
 \lim_{n\rightarrow \infty}\mE\s_n(H)  
  &  \geq & 
 \max_{\substack{S_Y=S_Y^T\succeq 0,S_X=S_X^T\succeq 0 \\
  \tr(S_Y)= \tr(S_X)=1} }\lp  \sqrt{\alpha} \tr\sqrt{ \sum_{i=1}^{l}
 (A_i S_X)(A_i^T S_Y) }  +  \tr\sqrt{ \sum_{i=1}^{l}
 (A_i^T S_Y)(A_i S_X) }  \rp .
\end{eqnarray}
\end{corollary}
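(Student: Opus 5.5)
The plan is to assemble the chain of inequalities already derived before the statement; the corollary is essentially a packaging of (\ref{eq:hrd9a1}). We start from the representation (\ref{eq:kronrep5}) with $\varphi=0$ (since $A_0=0$). This gives $\s_n(H)=\max_{\tr(R_X^TR_X)=\tr(R_Y^TR_Y)=1}\frac{1}{\sqrt{n}}\xi_a$. Taking expectations, we pass the limit inside for each fixed pair $(R_X,R_Y)$ and use the trivial bound $\mE\max\geq\max\mE$. The conclusion is that $\lim_{n\rightarrow\infty}\mE\s_n(H)$ is at least the maximum over $(R_X,R_Y)$ of $\lim_{n\rightarrow\infty}\frac{1}{\sqrt{n}}\mE\xi_a$. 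Since only a lower bound is needed, no interchange of limit and maximization beyond this is required; we only need existence of the limits, which Remark \ref{rem:rem0} and concentration provide.

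For fixed $(R_X,R_Y)$ we invoke Theorem \ref{thm:thm1a0}. It lower bounds $\lim\frac{1}{\sqrt{n}}\mE\xi_a$ by $\lim\frac{1}{\sqrt{n}}\mE L_{l,Y}+\lim\frac{1}{\sqrt{n}}\mE L_{l,X}$. The decoupling step into separate $X$ and $Y$ maximizations is legitimate because $\max_{X,Y}(f(Y)+g(X))=\max_Y f+\max_X g$, so (\ref{eq:hrd0a0}) in fact holds with equality in its second line. Per Remark \ref{rem:rem1}, the case $m\neq n$ is allowed because $\varphi$ is absent. Next we apply (\ref{eq:hrd9}), imported from \cite{Stojnicleh26}, to replace each limit by the variational expressions $\min_{\Gamma_Y}\bar{L}_{l,Y}$ and $\min_{\Gamma_X}\bar{L}_{l,X}$. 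Here we use the identities (\ref{eq:hrd9b0a0}) and (\ref{eq:hrd9b0a1}) to rewrite $(F_{i,Y}F_{i,Y}^T)^2=F_i^TF_i$ and $(F_{i,X}F_{i,X}^T)^2=F_iF_i^T$.

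Then we solve the inner minimizations. For PSD $M$, the function $\tr(M\Gamma^{-1})+\tr(\Gamma)$ over $\Gamma\succ0$ is minimized at $\Gamma=\sqrt{M}$, with value $2\tr\sqrt{M}$; by continuity the same holds when $M$ is singular, taking the infimum. This yields $\sqrt{\alpha}\tr\sqrt{\sum_iF_i^TF_i}+\tr\sqrt{\sum_iF_iF_i^T}$. Finally we substitute $F_i=R_XA_i^TR_Y^T$ and use cyclicity of the trace square root. Concretely, $\sum_iF_iF_i^T=R_X(\sum_iA_i^TR_Y^TR_YA_i)R_X^T$ has the same nonzero spectrum as $(\sum_iA_i^TS_YA_i)S_X$ with $S_X=R_X^TR_X$ and $S_Y=R_Y^TR_Y$. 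The analogous statement holds for $\sum_iF_i^TF_i$ and $\sum_iA_iS_XA_i^TS_Y$. Therefore $\tr\sqrt{\cdot}$, understood as the sum of square roots of the (nonnegative) eigenvalues, is preserved.

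The map $R\mapsto R^TR$ covers exactly the PSD matrices of unit trace, so the maximization over $(R_X,R_Y)$ becomes the maximization over $(S_X,S_Y)$ in the statement. The main point requiring care is this last step. The products $(A_iS_X)(A_i^TS_Y)$ are not symmetric, so we must justify that their eigenvalues are real and nonnegative, via similarity to $\sqrt{S_Y}A_iS_XA_i^T\sqrt{S_Y}$. We must also check that the trace square root in the statement is interpreted consistently with that similarity. The remaining steps are direct citations of earlier results.
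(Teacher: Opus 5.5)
Your proposal is correct and follows the paper's own route: the paper's proof is the same chain, namely Theorem~\ref{thm:thm1a0}, then (\ref{eq:hrd9}), then the explicit minimizers (\ref{eq:hrd9a0a0}), then the substitution $F_i=R_XA_i^TR_Y^T$ with $S_X=R_X^TR_X$, $S_Y=R_Y^TR_Y$ in (\ref{eq:hrd9a1}). Your additional care makes explicit what the paper leaves implicit: you use only $\mE\max\geq\max\mE$ where the paper writes an equality in (\ref{eq:hrd9a0}), you restrict to $\Gamma\succ 0$, you note that the decoupling in (\ref{eq:hrd0a0}) is an equality, and you justify $\tr\sqrt{\cdot}$ of the nonsymmetric products through their shared spectrum with a symmetric PSD matrix.
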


\begin{proof}
  Follows from Theorem \ref{thm:thm1a0} and (\ref{eq:hrd9})-(\ref{eq:hrd9a1}).
\end{proof}

\vspace{.15in}

\noindent \underline{\textbf{\emph{Practical implementation}}} 
\vspace{.05in}

Corollary \ref{cor:cor1a0} provides an elegant characterization of $ \lim_{n\rightarrow \infty}\mE\s_n(H)$. It relies on a single optimization, and for small $k_1$ and $k_2$ (which govern the sizes of $S_X$ and $S_Y$), it can be solved numerically. To create a bit more general numerical framework, it may actually be more convenient to work with (\ref{eq:hrd9a0}). We show that next.

First, we recall (\ref{eq:inteq1ab0b0})
\begin{eqnarray}
   \label{eq:numhrd6a15a0}
 F_i^TF_i= R_YA_iR_X^T R_XA_i^TR_Y^T \quad\mbox{and}\quad
 F_iF_i^T= R_XA_i^TR_Y^TR_YA_iR_X^T . 
\end{eqnarray}
Plugging these expressions into  (\ref{eq:hrd9a0}), gives
\begin{eqnarray}\label{eq:numhrd9a0}
 \lim_{n\rightarrow \infty}\mE\s_n(H)  
& \geq & 
\frac{1}{2} \max_{\tr(R_X^TR_X)=\tr(R_Y^TR_Y)=1} 
 \Bigg ( \Bigg . 
   \min_{\Gamma_Y=\Gamma_Y^T} \lp \alpha \tr\lp\lp \sum_{i=1}^{l}
R_YA_iR_X^T R_XA_i^TR_Y^T \rp \Gamma_Y^{-1} \rp + \tr(\Gamma_Y) \rp 
\nonumber \\
& & 
+ 
  \min_{\Gamma_X=\Gamma_X^T} \lp \tr\lp\lp \sum_{i=1}^{l}
R_XA_i^TR_Y^TR_YA_iR_X^T \rp \Gamma_X^{-1} \rp + \tr(\Gamma_X) \rp 
\Bigg . \Bigg ) .
\end{eqnarray}
Taking $\Gamma_X\rightarrow R_XZ_XR_X^T$ and $\Gamma_Y\rightarrow R_YZ_YR_Y^T$ (with $Z_X=Z_X^T\succeq 0$ and $Z_Y=Z_Y^T\succeq 0$; as (\ref{eq:hrd9a0a0}) shows, $\Gamma_X\succeq 0$ and  $\Gamma_Y\succeq 0$), we further find 
\begin{eqnarray}\label{eq:numhrd9a1}
 \lim_{n\rightarrow \infty}\mE\s_n(H)  
& \geq & 
\frac{1}{2} \max_{\substack{\tr(R_X^TR_X)=1\\\tr(R_Y^TR_Y)=1}} 
 \Bigg ( \Bigg . 
    \min_{Z_Y=Z_Y^T\succeq 0} \lp \alpha \tr\lp\lp \sum_{i=1}^{l}
A_iR_X^T R_XA_i^T\rp Z_Y^{-1} \rp + \tr(R_YZ_YR_Y^T) \rp 
\nonumber \\
& & 
+ 
  \min_{Z_X=Z_X^T\succeq 0} \lp \tr\lp\lp \sum_{i=1}^{l}
A_i^TR_Y^TR_YA_i \rp Z_X^{-1} \rp + \tr(R_XZ_XR_X^T) \rp 
\Bigg . \Bigg )
\nonumber \\
& = & 
\frac{1}{2} \max_{\substack{\tr(R_X^TR_X)=1\\\tr(R_Y^TR_Y)=1}} 
\min_{\substack{Z_Y=Z_Y^T\succeq 0\\Z_X=Z_X^T\succeq 0}} \Bigg ( \Bigg . 
    \alpha \tr\lp\lp \sum_{i=1}^{l}
A_iR_X^T R_XA_i^T\rp Z_Y^{-1} \rp + \tr(R_YZ_YR_Y^T)  
\nonumber \\
& & 
+ 
   \tr\lp\lp \sum_{i=1}^{l}
A_i^TR_Y^TR_YA_i \rp Z_X^{-1} \rp + \tr(R_XZ_XR_X^T)  
\Bigg . \Bigg )
\nonumber \\
& = & 
\frac{1}{2} 
\min_{\substack{Z_Y=Z_Y^T\succeq 0\\Z_X=Z_X^T\succeq 0}} 
\max_{\substack{\tr(R_X^TR_X)=1\\\tr(R_Y^TR_Y)=1}} 
\Bigg ( \Bigg . 
    \alpha \tr\lp\lp \sum_{i=1}^{l}
A_iR_X^T R_XA_i^T\rp Z_Y^{-1} \rp + \tr(R_YZ_YR_Y^T)  
\nonumber \\
& & 
+ 
   \tr\lp\lp \sum_{i=1}^{l}
A_i^TR_Y^TR_YA_i \rp Z_X^{-1} \rp + \tr(R_XZ_XR_X^T)  
\Bigg . \Bigg )
\nonumber \\
& = & 
\frac{1}{2} 
\min_{\substack{Z_Y=Z_Y^T\succeq 0\\Z_X=Z_X^T\succeq 0}} 
\max_{\substack{\tr(R_X^TR_X)=1\\\tr(R_Y^TR_Y)=1}} 
\Bigg ( \Bigg . 
    \alpha \tr\lp R_X \lp \sum_{i=1}^{l}
 A_i^T Z_Y^{-1} A_i\rp R_X^T \rp + \tr(R_YZ_YR_Y^T)  
\nonumber \\
& & 
+ 
   \tr\lp R_Y \lp \sum_{i=1}^{l}
  A_i  Z_X^{-1} A_i^T\rp R_Y^T \rp + \tr(R_XZ_XR_X^T)  
\Bigg . \Bigg )
\nonumber \\
& = & 
\frac{1}{2} 
\min_{\substack{Z_Y=Z_Y^T\succeq 0\\Z_X=Z_X^T\succeq 0}} 
\max_{\substack{\tr(R_X^TR_X)=1\\\tr(R_Y^TR_Y)=1}} 
\Bigg ( \Bigg . 
     \tr\lp R_X \lp Z_X +  \alpha \sum_{i=1}^{l}
 A_i^T Z_Y^{-1} A_i\rp R_X^T \rp    
\nonumber \\
& & 
+ 
   \tr\lp R_Y \lp Z_Y + \sum_{i=1}^{l}
  A_i  Z_X^{-1} A_i^T\rp R_Y^T \rp   
\Bigg . \Bigg )
\nonumber \\
& = & 
\frac{1}{2} 
\min_{\substack{Z_Y=Z_Y^T\succeq 0\\Z_X=Z_X^T\succeq 0}} 
 \Bigg ( \Bigg . 
     \lambda_n \lp Z_X +  \alpha \sum_{i=1}^{l}
 A_i^T Z_Y^{-1} A_i \rp    
 + 
   \lambda_n \lp Z_Y + \sum_{i=1}^{l}
  A_i  Z_X^{-1} A_i^T \rp   
\Bigg . \Bigg )
 ,
\end{eqnarray}
where the second equality follows since the objective is linear in $R_X^TR_X$ and $R_Y^TR_Y$ and quasi-convex (actually even convex) in $Z_X$ and $Z_Y$ on $Z_X\succeq 0$ and $Z_Y\succeq 0$. The above is a direct RDT non-square asymmetric analogue to symmetric Lehner formula (\ref{eq:amat1a0b2}). Given its elegance and clear resembling of (\ref{eq:amat1a0b2}), we formalize it in the following corollary.

\begin{corollary}
\label{cor:cor2a0} Assume the setup of Theorem \ref{thm:thm1a0} and Corollary \ref{cor:cor1a0} (effectively the setup of Theorem \ref{thm:thm1} with $A_0=0$ and $\alpha=\lim_{n\rightarrow \infty} \frac{m}{n}$ not necessarily equal to $1$). Let
 \begin{equation}
 \label{eq:cor2eq1} 
 \cA_0 \triangleq \{A_i\}_{i=0,1,\dots,l} \quad \mbox{with} \quad A_0=0. 
 \end{equation}
 Then 
\begin{equation}\label{eq:cor2eq2}
 \lim_{n\rightarrow \infty}\mE\s_n(H)  
    \geq 
\frac{1}{2} 
\min_{\substack{Z_Y=Z_Y^T\succeq 0\\Z_X=Z_X^T\succeq 0}} 
 \Bigg ( \Bigg . 
     \lambda_n \lp Z_X +  \alpha \sum_{i=1}^{l}
 A_i^T Z_Y^{-1} A_i \rp    
 + 
   \lambda_n \lp Z_Y + \sum_{i=1}^{l}
  A_i  Z_X^{-1} A_i^T \rp   
\Bigg . \Bigg )
 \triangleq \bar{\rho}_n(\cA_0)
 .
\end{equation}
\end{corollary}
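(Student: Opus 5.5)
The plan is to start from (\ref{eq:hrd9a0}) and then carry out the chain of equalities in (\ref{eq:numhrd9a0})--(\ref{eq:numhrd9a1}), justifying each step. The inequality in (\ref{eq:hrd9a0}) already follows from Theorem \ref{thm:thm1a0} together with (\ref{eq:hrd9}). So everything that remains is a deterministic identity: the max--min expression on the right of (\ref{eq:hrd9a0}) has to be shown equal to $\bar{\rho}_n(\cA_0)$. The steps, in order, are as follows.

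\textbf{Step 1: substituting the forms of $F_i$ and reparametrizing $\Gamma$.} First, substitute $F_i^TF_i$ and $F_iF_i^T$ from (\ref{eq:numhrd6a15a0}) into (\ref{eq:hrd9a0}). Next, use (\ref{eq:hrd9a0a0}) to restrict the inner minimizations to $\Gamma_X,\Gamma_Y\succeq 0$. Then reparametrize $\Gamma_X=R_XZ_XR_X^T$ and $\Gamma_Y=R_YZ_YR_Y^T$. For invertible $R_X,R_Y$ this change of variables is a bijection, and the objects in the resulting display simplify by $(R_Y^T)^{-1}\cdots$ cancellation, for example
\[
\tr\bigl(R_YA_iR_X^TR_XA_i^TR_Y^T (R_YZ_YR_Y^T)^{-1}\bigr)=\tr\bigl(A_iR_X^TR_XA_i^T Z_Y^{-1}\bigr).
\]
The case of singular $R_X$ or $R_Y$ is handled by density together with continuity of the optimal value, which is $\sqrt{\alpha}\tr\sqrt{\cdot}+\tr\sqrt{\cdot}$ by (\ref{eq:hrd9a1}). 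Because the two inner minimizations decouple, they can be merged into a single joint minimization over $(Z_X,Z_Y)$.

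\textbf{Step 2: exchanging $\max$ and $\min$.} This is the main obstacle. The objective depends on $R_X,R_Y$ only through $P_X=R_X^TR_X$ and $P_Y=R_Y^TR_Y$, and it is linear in these. They range over the compact convex sets $\{P\succeq 0,\tr P=1\}$. In $(Z_X,Z_Y)$ the objective is convex on the positive definite cone, since $Z\mapsto \tr(MZ^{-1})$ is convex for $M\succeq 0$ and the remaining terms are linear. Sion's minimax theorem therefore applies, using compactness on the max side. The one point to check is that restricting to $Z\succ 0$, where the objective is finite, does not affect the infimum; this holds because the objective is $+\infty$ on singular $Z$ unless the range condition holds, and can be approached in the limit.

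\textbf{Step 3: cyclicity and evaluating the inner maximum.} After the exchange, rewrite the terms by cyclicity of the trace:
\[
\tr\bigl(A_iP_XA_i^TZ_Y^{-1}\bigr)=\tr\bigl(P_XA_i^TZ_Y^{-1}A_i\bigr),
\]
and similarly for the $X$-side term. Group the terms with $\tr(P_XZ_X)$ and $\tr(P_YZ_Y)$. For each block, apply the variational identity $\max_{P\succeq0,\tr P=1}\tr(PM)=\lambda_{\max}(M)$, which the paper denotes $\lambda_n$. This produces exactly $\bar{\rho}_n(\cA_0)$ and completes the argument.
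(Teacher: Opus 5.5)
Your proposal is correct and follows the paper's own derivation essentially step for step. You substitute (\ref{eq:numhrd6a15a0}) into (\ref{eq:hrd9a0}), reparametrize $\Gamma_X=R_XZ_XR_X^T$ and $\Gamma_Y=R_YZ_YR_Y^T$, merge the two decoupled minimizations, exchange $\max$ and $\min$ using linearity in $R_X^TR_X,R_Y^TR_Y$ and convexity in $Z_X,Z_Y$, and finish with cyclicity and $\max_{P\succeq 0,\tr(P)=1}\tr(PM)=\lambda_n(M)$; your extra justifications (Sion's theorem with compactness on the maximizing side, the treatment of singular $R_X,R_Y$, and restricting to $Z\succ 0$) only make explicit what the paper asserts in passing.
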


\begin{proof}
  Follows from (\ref{eq:hrd9a0}) and (\ref{eq:numhrd6a15a0})-(\ref{eq:numhrd9a1}).
\end{proof}

Compared to the classical symmetric Lehner's formulas (\ref{eq:amat1a0b2}), (\ref{eq:cor2eq2})  uncovers presence of  a decoupling phenomenon. Namely, in the obtained characterization one clearly distinguishes contributions of the two leading eigenvalues associated to $X$ and $Y$ related deterministic parts of the system. However, decoupling is interlaced as the optimizing variables $Z_X$ and $Z_Y$  contribute to both parts.

In addition to providing an asymmetric analogue to Lehner's formula, Corollary \ref{cor:cor2a0} also provides a convenient analytical formulation amenable to simple numerical implementations. Namely, one first observes
\begin{align}\label{eq:numprfeq0a0}
&  &   \min_{\substack{Z_Y=Z_Y^T\succeq 0\\Z_X=Z_X^T\succeq 0}} 
 &  \hspace{.3in}  \Bigg ( \Bigg . 
     \lambda_n \lp Z_X +  \alpha \sum_{i=1}^{l}
 A_i^T Z_Y^{-1} A_i \rp    
 + 
   \lambda_n \lp Z_Y + \sum_{i=1}^{l}
  A_i  Z_X^{-1} A_i^T \rp   
\Bigg . \Bigg )   
\nonumber  \\
  \Longleftrightarrow  &  & \min_{
   \tilde{\lambda}_1,\tilde{\lambda}_2,Z_Y=Z_Y^T\succeq 0,Z_X=Z_X^T\succeq 0} 
   &  \hspace{.3in} \tilde{\lambda}_1+\tilde{\lambda}_2
\nonumber \\
  & &  \mbox{subject to } &     \hspace{.3in}   \lambda_n \lp Z_X +  \alpha \sum_{i=1}^{l}
 A_i^T Z_Y^{-1} A_i \rp    
 \leq \tilde{\lambda}_1      
\nonumber \\
  & &   &   \hspace{.3in}     \lambda_n \lp Z_Y +  \sum_{i=1}^{l}
 A_i Z_X^{-1} A_i^T \rp    
 \leq \tilde{\lambda}_2  
 .    
\end{align}
Utilizing the standard Schur complement matrix definiteness characterization, the following SDP (semi-definite programming) can be formulated (for a symmetric square analogue, see, e.g., \cite{Kunisky26}; for a different asymmetric non-square alternative, see also the last part of Section 
 \ref{sec:nqhandlerdA0})
\begin{eqnarray}
\label{eq:numprfeq2}
\bar{\rho}_n(\cA_0) = \frac{1}{2}\min_{\tilde{\lambda}_1,\tilde{\lambda}_2,Z_X,Z_Y}  & & \tilde{\lambda}_1+\tilde{\lambda}_2
\nonumber \\
\mbox{subject to } & & 
\begin{bmatrix}
\tilde{\lambda}_1 I_{k_2\times k_2} - \alpha \sum_{i=1}^{l}
 A_i^T Z_Y A_i    & I_{k_2\times k_2} \\ I_{k_2\times k_2}  &  Z_X 
\end{bmatrix} \succeq 0
\nonumber \\
& & 
\begin{bmatrix}
\tilde{\lambda}_2 I_{k_1\times k_1} - \sum_{i=1}^{l}
 A_i Z_X A_i^T    & I_{k_1\times k_1} \\ I_{k_1\times k_1}  &  Z_Y 
\end{bmatrix} \succeq 0
     \nonumber \\
 & &  Z_X=Z_X^T\succeq 0, Z_Y=Z_Y^T \succeq 0.
  \end{eqnarray}
To see why (\ref{eq:numprfeq2}) corresponds to the left-hand side of (\ref{eq:cor2eq2}), one notes
that the Schur complement positive semi-definiteness condition gives 
\begin{eqnarray}
\label{eq:numprfeq3}
 \begin{bmatrix}
\tilde{\lambda}_1 I_{k_2\times k_2} - \alpha \sum_{i=1}^{l}
 A_i^T Z_Y A_i    & I_{k_2\times k_2} \\ I_{k_2\times k_2}  &  Z_X 
\end{bmatrix} \succeq 0
& \Longleftrightarrow & 
\tilde{\lambda}_1 I_{k_2\times k_2} - \alpha \sum_{i=1}^{l}
 A_i^T Z_Y A_i   - I_{k_2\times k_2}  Z_X^{-1} I_{k_2\times k_2}, Z_X\succeq 0 
\nonumber \\
\begin{bmatrix}
\tilde{\lambda}_2 I_{k_1\times k_1} - \sum_{i=1}^{l}
 A_i Z_X A_i^T    & I_{k_1\times k_1} \\ I_{k_1\times k_1}  &  Z_Y 
\end{bmatrix} \succeq 0
& \Longleftrightarrow & 
\tilde{\lambda}_1 I_{k_2\times k_2} -  \sum_{i=1}^{l}
 A_i Z_X A_i^T   - I_{k_1\times k_1}  Z_Y^{-1} I_{k_1\times k_1}, Z_Y\succeq 0  
 .
 \nonumber \\
  \end{eqnarray}
A simple change of variables $Z_Y\rightarrow Z_Y^{-1}$ and $Z_X\rightarrow Z_X^{-1}$ then gives that the conditions on the left-hand sides of  (\ref{eq:numprfeq3}) precisely match the constraints in 
(\ref{eq:numprfeq0a0}). (To avoid overwhelming presentation with many tiny details, we always assume that the choice of $A_i$ is such that using $\inf$ and $\min$ in (\ref{eq:numprfeq2}) and all other optimization programs similar to it, brings not much of practically relevant difference.)

To see how all of above practically works, we take  $k_1=2$,  $k_2=3$,  $l=3$ and a particular choice  of $A_i$
 \begin{eqnarray}
 \label{eq:numsimeq1}
A_0 = 0,  \quad 
A_1 = \begin{bmatrix}
 2 & -1 & .3 \\ -2  & 3  &-2 
    \end{bmatrix},  \quad 
A_2 = \begin{bmatrix}
1 & -1 & 1 \\ .3  &  -1  & -.7 
    \end{bmatrix},  \quad 
A_3 = \begin{bmatrix}
1.4 & -2 & 1 \\ .5 & -1 & -2
    \end{bmatrix}. 
   \end{eqnarray}
Solving numerically the program given in (\ref{eq:numprfeq2})  for $\alpha=1$ and different values of $n$  produces results shown in Table \ref{tab:tab00}. Already for fairly small $n$ (on the order of a few hundreds), simulated results very closely approach the theoretical predictions (obtained in an infinite dimensional context).

\begin{table}[h]
\caption{$\mE(\s_n(H))$ and $\mbox{Std}(\s_n(H))$ as a function of $n$; $A_0=0$, $\alpha=1$; \textbf{theory}/\bl{\textbf{simulated}}}\vspace{.1in}
\centering
\def\arraystretch{1.2}
\begin{tabular}{||c||c|c|c|c|c||}\hline\hline
 \hspace{-0in}$n$                                              & $100$    &  $200$ &  $500$     & $1000$  & $\infty$ \\ \hline\hline
$\mE(\s_n(H))$                                       & $ \bl{\mathbf{10.068 }}   $  & $   \bl{\mathbf{ 10.132 }}  $  & $   \bl{\mathbf{ 10.185 }} $  
& $ \bl{\mathbf{ 10.208 }} $  & $ \mathbf{10.243} $ \\ \hline
$\mbox{Std}(\s_n(H))$                                       & $ \bl{\mathbf{.165 }} $  & $   \bl{\mathbf{  .105  }} $  & $ \bl{\mathbf{   .058 }} $  
& $ \bl{\mathbf{ .033  }} $  & $ \mathbf{0} $ \\ \hline\hline
  \end{tabular}
\label{tab:tab00}
\end{table}

To showcase the applicability of the above machinery to both, square and non-square scenarios, we also solved 
(\ref{eq:numprfeq2}) for $\alpha=1.9$ (this corresponds to non-square Gaussians; the above choice of $A_i$ ensures their non-squareness as well). The obtained results are shown in Table \ref{tab:tab01} and again very accurately emulate the theoretical predictions for fairly small $n$.

\begin{table}[h]
\caption{$\mE(\s_n(H))$ and $\mbox{Std}(\s_n(H))$ as a function of $n$; $A_0=0$, $\alpha=1.9$; \textbf{theory}/\bl{\textbf{simulated}}}\vspace{.1in}
\centering
\def\arraystretch{1.2}
\begin{tabular}{||c||c|c|c|c|c||}\hline\hline
 \hspace{-0in}$n$                                              & $100$    &  $200$ &  $500$     & $1000$  & $\infty$ \\ \hline\hline
$\mE(\s_n(H))$                                       & $ \bl{\mathbf{12.105}}   $  & $   \bl{\mathbf{12.170 }}  $  & $   \bl{\mathbf{12.220}} $  
& $ \bl{\mathbf{12.239}} $  & $ \mathbf{12.275} $ \\ \hline
$\mbox{Std}(\s_n(H))$                                       & $ \bl{\mathbf{.154}} $  & $   \bl{\mathbf{ .098}} $  & $ \bl{\mathbf{ .052 }} $  
& $ \bl{\mathbf{ .032 }} $  & $ \mathbf{0} $ \\ \hline\hline
  \end{tabular}
\label{tab:tab01}
\end{table}

\subsubsubsection{Deformed centered scenario }
\label{sec:defnqhandlerdA0}

Certain practical scenarios are closely connected to the $A_0=0$ context discussed in the previous section. Estimating the error operator norm in the so-called sample covariance problems typically amounts to determining the largest eigenvalue of a slightly deformed matrix $H^TH$. In the Kronecker-Gaussian context of our interest here, for $A_D=A_D^T\in\mR^{k_2\times k_2}$, a precise mathematical analogue of such a deformation would be  
\begin{equation}\label{eq:defeq1}
\lim_{n\rightarrow \infty}\mE\lambda_n( A_D \otimes I + H^TH).
  \end{equation}
Recalling (\ref{eq:kronrep1}), we write
\begin{eqnarray}\label{eq:defkronrep1}
\lambda_n(A_D\otimes I + H^TH)  
 & = &
 \max_{\x\in\mS^{k_2  n}}  \x^T ( A_D\otimes I +  H^TH ) \x 
\nonumber \\
& = & 
 \max_{\x\in\mS^{k_2 n}}  
 \lp
 \x^T ( A_D\otimes I ) \x
 +
 \|H\x \|^2_2
 \rp
\nonumber \\
& = &
 \max_{\x\in\mS^{k_2 n}} 
 \lp 
  \x^T ( A_D\otimes I ) \x
  + \lp \max_{\y\in\mS^{k_1 m}}  \y^T H\x \rp^2
  \rp 
\nonumber \\
& = & 
 \max_{\x\in\mS^{k_2 n}} 
 \lp 
  \x^T ( A_D\otimes I ) \x
  + \lp
 \max_{\y\in\mS^{k_1 m}} \y^T\lp \frac{1}{\sqrt{n}} \sum_{i=1}^{l} A_i\otimes G_i \rp \x \rp^2\rp.
 \end{eqnarray}
Repeating (\ref{eq:kronrep2})-(\ref{eq:kronrep4}), we arrive at
\begin{eqnarray}\label{eq:defkronrep4}
\lambda_n(A_D\otimes I + H^TH )  
   =  
    \max_{\tr(\bar{X}^T\bar{X})=1} 
    \lp
    \tr(A_D \bar{X}^T\bar{X} )
    +
    \lp
   \max_{\tr(\bar{Y}^T\bar{Y})=1} \lp  \frac{1}{\sqrt{n}} \sum_{i=1}^{l} \tr(A_i^T \bar{Y}^T G_i \bar{X} ) \rp \rp^2\rp .
   \end{eqnarray}
Reutilizing (\ref{eq:inteq1ab0})-(\ref{eq:kronrep6}), we also find
\begin{eqnarray}\label{eq:defkronrep5}
\lambda_n(A_D\otimes I + H^TH)  
  & =  & \max_{\tr(R_X^TR_X)=\tr(R_Y^TR_Y)=1} \lp  \frac{1}{n}\xi_{a,D} \rp  ,
\end{eqnarray}
where   
\begin{eqnarray}\label{eq:defkronrep6}
\varphi_D = n \tr(A_D R_X^T R_X)\quad \mbox{and} \quad  \xi_{a,D}  & \triangleq &  \max_{X^TX=Y^TY=I} \lp \varphi_D +  \lp \sum_{i=1}^{l}  \tr(Y^T G_i X F_i ) \rp^2 \rp .
\end{eqnarray}
One can then further reutilize all the machinery prior to (\ref{eq:numhrd9a1}) to arrive to its following analogue
\begin{eqnarray}\label{eq:defkronrep7}
 \lim_{n\rightarrow \infty}\mE 
\lambda_n(A_D\otimes I + H^TH)   
& \geq  & 
\min_{\substack{Z_Y=Z_Y^T\succeq 0\\Z_X=Z_X^T\succeq 0}} 
\max_{\substack{\tr(R_X^TR_X)=1\\\tr(R_Y^TR_Y)=1}} 
\Bigg ( \Bigg . 
\tr(A_DR_X^TR_X) 
\nonumber \\
& & +
\Bigg ( \Bigg . 
\frac{1}{2} 
     \tr\lp R_X \lp Z_X +  \alpha \sum_{i=1}^{l}
 A_i^T Z_Y^{-1} A_i\rp R_X^T \rp    
\nonumber \\
& & 
+ 
\frac{1}{2} 
   \tr\lp R_Y \lp Z_Y + \sum_{i=1}^{l}
  A_i  Z_X^{-1} A_i^T\rp R_Y^T \rp   
\Bigg . \Bigg )^2
\Bigg . \Bigg )
  .
\end{eqnarray}
Relying further on the RDT methodologies, we constrain $\tr(A_DR_X^TR_X)=c_D $ and obtain
\begin{align}\label{eq:defkronrep8}
 & \lim_{n\rightarrow \infty}\mE 
\lambda_n(A_D\otimes I + H^TH)   
 \geq   
\nonumber \\
& \geq
\min_{\substack{Z_Y=Z_Y^T\succeq 0\\Z_X=Z_X^T\succeq 0}} 
\max_{\substack{\tr(R_X^TR_X)=1\\\tr(R_Y^TR_Y)=1\\ c_D}}
 \min_{\gamma_D}
\Bigg ( \Bigg . 
c_D
  +
\Bigg ( \Bigg .
\gamma_D \tr(A_DR_X^TR_X) -\gamma_D c_D
+ 
\frac{1}{2} 
     \tr\lp R_X \lp Z_X +  \alpha \sum_{i=1}^{l}
 A_i^T Z_Y^{-1} A_i\rp R_X^T \rp    
\nonumber \\
&  \hspace{.15in}
+ 
\frac{1}{2} 
   \tr\lp R_Y \lp Z_Y + \sum_{i=1}^{l}
  A_i  Z_X^{-1} A_i^T\rp R_Y^T \rp   
\Bigg . \Bigg )^2
\Bigg . \Bigg )
=
\max_{c_D} 
 \Bigg ( \Bigg . 
c_D
   +
 \zeta(c_D)^2
\Bigg . \Bigg )
  ,
\end{align}
where
\begin{equation}\label{eq:defkronrep9}
 \zeta(c_D)  =   
 \min_{\substack{Z_Y=Z_Y^T\succeq 0\\Z_X=Z_X^T\succeq 0\\\gamma_D}} 
 \Bigg ( \Bigg .
\lambda_n \lp \gamma_D A_D
+ 
\frac{1}{2} 
      \lp Z_X +  \alpha \sum_{i=1}^{l}
 A_i^T Z_Y^{-1} A_i \rp\rp    
+ 
\frac{1}{2} 
   \lambda_n \lp Z_Y + \sum_{i=1}^{l}
  A_i  Z_X^{-1} A_i^T\rp      
 -\gamma_D c_D
\Bigg . \Bigg )
   .
\end{equation}

The above results are summarized in the following theorem.
                
\begin{theorem}
\label{thm:defthm1} Assume  the setup of Theorem \ref{thm:thm1} with $A_0=0$ and $\alpha=\lim_{n\rightarrow \infty} \frac{m}{n}$ not necessarily equal to $1$. Additionally, let $A_D=A_D^T$ and let $\zeta(c_D)$ be as in (\ref{eq:defkronrep9}). Then
\begin{equation}\label{eq:defthm1eq2}
\lim_{n\rightarrow \infty}\mE 
\sqrt{\lambda_n(A_D\otimes I + H^TH) }  
    \geq 
\sqrt{\max_{c_D} 
 \Bigg ( \Bigg . 
c_D
   +
 \zeta(c_D)^2
\Bigg . \Bigg )}
 .
\end{equation}
\end{theorem}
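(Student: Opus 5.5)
The plan is to follow the chain (\ref{eq:defkronrep1})--(\ref{eq:defkronrep9}) step by step, check where each inequality comes from, and close with a square-root/concentration argument. Throughout, $R_X,R_Y$ are fixed with $\tr(R_X^TR_X)=\tr(R_Y^TR_Y)=1$, and we use the representation (\ref{eq:defkronrep5})--(\ref{eq:defkronrep6}).

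\textbf{Reduction to the centered random dual.} The first step is to separate the deterministic part from the random part:
\begin{equation*}
\frac{1}{n}\xi_{a,D}=\tr(A_DR_X^TR_X)+\Bigl(\tfrac{1}{\sqrt{n}}\max_{X^TX=Y^TY=I}\textstyle\sum_{i}\tr(Y^TG_iXF_i)\Bigr)^2 .
\end{equation*}
This holds because the term $\varphi_D$ does not depend on $X$ or $Y$. The inner maximum can be taken nonnegative, by flipping the sign of $Y$. Squaring is then monotone on the nonnegatives, and by Jensen's inequality $\mE(\cdot)^2\ge(\mE\,\cdot)^2$. Hence
\begin{equation*}
\lim\mE\tfrac{1}{n}\xi_{a,D}\ \ge\ \tr(A_DR_X^TR_X)+\Bigl(\lim\tfrac{1}{\sqrt n}\mE\xi_a\Bigr)^2, \qquad \text{with } \varphi=0 .
\end{equation*}
Next, Theorem \ref{thm:thm1a0} together with (\ref{eq:hrd9})--(\ref{eq:hrd9a0a0}) lower bounds $\lim\frac{1}{\sqrt n}\mE\xi_a$. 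For fixed $R_X,R_Y$ the bound is $\frac12\min_{Z_X,Z_Y\succeq0}\bigl(\tr(R_X(Z_X+\alpha\sum A_i^TZ_Y^{-1}A_i)R_X^T)+\tr(R_Y(Z_Y+\sum A_iZ_X^{-1}A_i^T)R_Y^T)\bigr)$, using the substitutions $\Gamma_X=R_XZ_XR_X^T$ and $\Gamma_Y=R_YZ_YR_Y^T$ of (\ref{eq:numhrd9a1}). This bound is nonnegative, so squaring preserves the inequality.

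\textbf{Deterministic optimization.} For the outer problem we maximize over $R_X,R_Y$. This is valid because $\lambda_n$ is a maximum over $R_X,R_Y$, and since the representation holds pointwise in $n$, $\mE\max\ge\max\mE$. We then fix the level $c_D=\tr(A_DR_X^TR_X)$ and enforce it with a Lagrange multiplier $\gamma_D$. For any fixed $\gamma_D$, the penalized objective agrees with the constrained one on the constraint set, so taking $\min_{\gamma_D}$ and relaxing the maximum over $R_X$ only increases the value. The result is $\max_{c_D}(c_D+\zeta(c_D)^2)$ once we justify two exchanges. The first is moving $\min_{Z}$ outside the square; this is valid because $t\mapsto t^2$ is increasing on $t\ge0$. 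The second is swapping $\max_{R}$ with $\min_{Z,\gamma_D}$. For this we invoke convexity in $(Z_X,Z_Y,\gamma_D)$ and linearity in $(R_X^TR_X,R_Y^TR_Y)$ over compact convex trace-one PSD sets, exactly as in the second equality of (\ref{eq:numhrd9a1}). After the swap, the maximization over $R_X,R_Y$ of linear functionals gives the two $\lambda_n$ terms in (\ref{eq:defkronrep9}).

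\textbf{Main obstacle and conclusion.} The main obstacle is the direction of the min--max exchange for the quantity $\zeta(c_D)$. Because the inequality chain needs $\max\min\ge$ the stated value, the minimax theorem is needed with equality, not merely weak duality. I would verify the convex--concave structure directly: $Z\mapsto Z^{-1}$ is operator convex, and $\gamma_D$ enters affinely. If the exchange is problematic, a fallback is to read $\zeta$ as a min over $(Z,\gamma_D)$ of a max, which is a valid upper envelope; we then argue that for the $\max_{c_D}$ only the optimal pairing matters.

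Finally, the stated bound is on $\mE\sqrt{\lambda_n(\cdot)}$ rather than on $\mE\lambda_n$. We pass between them using Remark \ref{rem:rem0}: $\lambda_n(A_D\otimes I+H^TH)$ concentrates and is bounded, so $\lim\mE\sqrt{\lambda_n}=\sqrt{\lim\mE\lambda_n}$. Applying $\sqrt{\cdot}$, which is monotone, yields (\ref{eq:defthm1eq2}).
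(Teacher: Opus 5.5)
Your route is the paper's chain (\ref{eq:defkronrep1})--(\ref{eq:defkronrep9}), and the step you single out is indeed the crux. It does go through, but two of your justifications point the wrong way.

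\textbf{The Lagrangian step.} The sentence ``taking $\min_{\gamma_D}$ and relaxing the maximum over $R_X$ only increases the value'' is weak duality. It bounds the constrained maximum from \emph{above}, so it cannot feed a lower bound. What you need, and what holds, is equality. Write $S_X=R_X^TR_X$, $S_Y=R_Y^TR_Y$, $M_X=Z_X+\alpha\sum_{i}A_i^TZ_Y^{-1}A_i$, $M_Y=Z_Y+\sum_iA_iZ_X^{-1}A_i^T$, and
\[
\Phi(S,Z,\gamma_D)=\tfrac12\tr(S_XM_X)+\tfrac12\tr(S_YM_Y)+\gamma_D\bigl(\tr(A_DS_X)-c_D\bigr).
\]
Whenever $\tr(A_DS_X)\neq c_D$ one has $\inf_{\gamma_D}\Phi=-\infty$. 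Hence $\max_{S}\inf_{Z,\gamma_D}\Phi$, with $S$ ranging over pairs of trace-one PSD matrices, equals the constrained quantity $\max_{\tr(A_DS_X)=c_D}\min_Z\frac12\bigl(\tr(S_XM_X)+\tr(S_YM_Y)\bigr)$ exactly. Now apply Sion's theorem once. Here $S$ lies in a compact convex set, $\Phi$ is linear in $S$, and $\Phi$ is jointly convex and continuous in $(Z_X,Z_Y,\gamma_D)$ on $\{Z_X\succ 0,Z_Y\succ 0\}\times\mR$. This gives $\max_S\inf_{Z,\gamma_D}\Phi=\inf_{Z,\gamma_D}\max_S\Phi=\zeta(c_D)$, which settles the Lagrangian step and the $Z$-exchange together.

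\textbf{The fallback.} Reading $\zeta$ as a min--max ``upper envelope'' only yields $\zeta(c_D)\ge$ the quantity you control. That is again the useless direction, so drop it; it is not needed.

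\textbf{The range of $c_D$.} State explicitly that $c_D$ ranges over $[\lambda_{\min}(A_D),\lambda_{\max}(A_D)]$. Outside this interval, sending $\gamma_D\to\pm\infty$ gives $\zeta(c_D)=-\infty$, so $c_D+\zeta(c_D)^2=+\infty$ and the bound would be vacuous. Inside it, the identity above gives $\zeta(c_D)\ge 0$. So $\zeta(c_D)^2$ really is the square of the nonnegative constrained value.

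\textbf{Comparison with the paper.} With these repairs your argument is tidier than the paper's. The paper first records (\ref{eq:defkronrep7}) with $\min_Z\max_R$ applied to the \emph{squared} objective, and only afterwards introduces $c_D$. That squared objective is convex, not linear, in $(S_X,S_Y)$. So the linearity argument behind the second equality in (\ref{eq:numhrd9a1}) does not directly cover that exchange. You instead fix $c_D$ first, exchange on the unsquared, linear-in-$S$ quantity, and only then square. That ordering is what makes the minimax step legitimate, and it lands on the same $\max_{c_D}(c_D+\zeta(c_D)^2)$.

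Using Jensen for $\mE(\cdot)^2\ge(\mE\,\cdot)^2$ is likewise a clean replacement for the paper's implicit appeal to concentration at that point. Concentration (Remark \ref{rem:rem0}) is then needed only for the final passage to $\mE\sqrt{\lambda_n}$.
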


\begin{proof}
  Follows from the preceding discussion.
\end{proof}

\vspace{.15in}

\noindent \underline{\textbf{\emph{Practical implementation}}} 
\vspace{.05in}

The following SDP can be used to determine $\zeta(c_D)$
\begin{eqnarray}
\label{eq:defkronrep10}
\zeta(c_D) =  \min_{\tilde{\lambda}_1,\tilde{\lambda}_2,Z_X,Z_Y,\gamma_D}  & & \tilde{\lambda}_1+\frac{1}{2}\tilde{\lambda}_2 -\gamma_D c_D
\nonumber \\
\mbox{subject to } & & 
\begin{bmatrix}
\tilde{\lambda}_1 I_{k_2\times k_2} - \frac{1}{2}\alpha \sum_{i=1}^{l} 
 A_i^T Z_Y A_i -\gamma_D A_D   & I_{k_2\times k_2} \\ I_{k_2\times k_2}  &  2Z_X 
\end{bmatrix} \succeq 0
\nonumber \\
& & 
\begin{bmatrix}
\tilde{\lambda}_2 I_{k_1\times k_1} - \sum_{i=1}^{l}
 A_i Z_X A_i^T    & I_{k_1\times k_1} \\ I_{k_1\times k_1}  &  Z_Y 
\end{bmatrix} \succeq 0
     \nonumber \\
 & &  Z_X=Z_X^T\succeq 0, Z_Y=Z_Y^T \succeq 0.
  \end{eqnarray}
To  practically implement the above program, we take  $k_1=2$,  $k_2=3$,  $l=3$ and $A$ matrices   ($A_0=0$)
 \begin{equation}
 \label{eq:defnumsimeq1}
A_D = 30\times \begin{bmatrix}
 2.4  & 1 & -.3 \\ 1 & 2 &  .5 \\ -.3 & .5 & 3  
    \end{bmatrix},  \quad 
A_1 = \begin{bmatrix}
 2 & -1 & .3 \\ -2  & 3  &-2 
    \end{bmatrix},  \quad 
A_2 = \begin{bmatrix}
1 & -1 & 1 \\ .3  &  -1  & -.7 
    \end{bmatrix},  \quad 
A_3 = \begin{bmatrix}
1.4 & -2 & 1 \\ .5 & -1 & -2
    \end{bmatrix}. 
   \end{equation}
Solving (\ref{eq:defkronrep10}) numerically for $\alpha=1.9$ and different values of $n$  produces results shown in Table \ref{tab:tab02}. As earlier, for fairly small $n$, simulated results very closely approach the theoretical predictions.

\begin{table}[h]
\caption{$\mE\sqrt{\lambda_n(A_D\otimes I + H^TH)}$ and $\mbox{Std}(\sqrt{\lambda_n(A_D\otimes I + H^TH)})$ as a function of $n$; $A_i$ as in (\ref{eq:defnumsimeq1}), $\alpha=1.9$; \textbf{theory}/\bl{\textbf{simulated}}}\vspace{.1in}
\centering
\def\arraystretch{1.2}
\begin{tabular}{||c||c|c|c|c|c||}\hline\hline
 \hspace{-0in}$n$                                              & $100$    &  $200$ &  $500$     & $1000$  & $\infty$ \\ \hline\hline
$\mE\sqrt{\lambda_n(A_D\otimes I + H^TH)}$                                       & $ \bl{\mathbf{13.641     }}   $  & $   \bl{\mathbf{  13.698  }}  $  & $   \bl{\mathbf{ 13.743  }} $  
& $ \bl{\mathbf{ 13.757 }} $  & $ \mathbf{13.784} $ \\ \hline
$\mbox{Std}(\sqrt{\lambda_n(A_D\otimes I + H^TH)})$                                       & $ \bl{\mathbf{ .131  }} $  & $   \bl{\mathbf{  .084   }} $  & $ \bl{\mathbf{   .047   }} $  
& $ \bl{\mathbf{  .029 }} $  & $ \mathbf{0} $ \\ \hline\hline
$\mE (c_D)$                                       & $ \bl{\mathbf{46.521  }}   $  & $   \bl{\mathbf{  46.268  }}  $  & $   \bl{\mathbf{  46.137  }} $  
& $ \bl{\mathbf{  46.051 }} $  & $ \mathbf{45.950} $ \\ \hline
$\mbox{Std} (c_D) $                                       & $ \bl{\mathbf{ 1.531  }} $  & $   \bl{\mathbf{ 1.112   }} $  & $ \bl{\mathbf{   .623  }} $  
& $ \bl{\mathbf{   .461  }} $  & $ \mathbf{0} $ \\ \hline\hline
  \end{tabular}
\label{tab:tab02}
\end{table}

\subsubsubsection{Non-centered scenario ($A_0\neq 0$)}
\label{sec:nqhandlerdA0}

When $A_0\neq 0$, one in general has $\varphi \neq 0$ which implies that simplifications utilized in the previous subsection are a priori inapplicable. Nonetheless, we have the following analogue to Theorem \ref{thm:thm1a0}.

\begin{theorem}
\label{thm:nqthm1a0} Assume the setup of Theorem \ref{thm:thm1a0} with $A_0\neq 0$. One then has  
\begin{eqnarray}
   \label{eq:nqthm1eq2}
\lim_{n\rightarrow \infty}  \frac{1}{\sqrt{n}}  \mE \xi_a  
 \geq 
 \lim_{n\rightarrow \infty}  \frac{1}{\sqrt{n}}  \mE  
    \max_{\substack{Y\in\mR^{n\times k_1}, Y^TY =I\\ X\in\mR^{n\times k_2}, X^TX =I}} \lp \varphi + \sum_{i=1}^{l}  \tr\lp G_i^{(4)} Y F_{i,Y}F_{i,Y}^T \rp  + \sum_{i=1}^{l} \tr\lp G_i^{(5)} X F_{i,X}F_{i,X}^T \rp  \rp   .
\end{eqnarray}
 \end{theorem}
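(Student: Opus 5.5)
The plan is to combine Theorem \ref{thm:thm1}, which already holds with a nonzero deterministic shift $\varphi$, with the linearization step behind Theorem \ref{thm:lbthm1a0} (ultimately Theorem 1, Corollary 1, and Theorem 5 of \cite{Stojnicleh26}). The new feature is that the two quadratic forms can no longer be maximized separately, because $\varphi=\sqrt{n}\tr(A_0^TR_Y^TY^TXR_X)$ couples $X$ and $Y$. So the decoupling used in (\ref{eq:hrd0a0}) is unavailable, and each quadratic Gaussian form must instead be replaced by its linear counterpart \emph{inside} the joint maximization.

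\textbf{Step 1.} By Theorem \ref{thm:thm1} and (\ref{eq:mt5a1a1}),
\[
\lim_{n\rightarrow\infty}\frac{1}{\sqrt{n}}\mE\xi_a\geq \lim_{n\rightarrow\infty}\frac{1}{\sqrt{n}}\mE\max_{X^TX=Y^TY=I}\Big(\varphi+\frac{1}{\sqrt{2}}\sum_{i=1}^{l}\big(\tr(Y^TG_i^{(1)}YF_{i,Y}F_{i,Y}^T)+\tr(X^TG_i^{(2)}XF_{i,X}F_{i,X}^T)\big)\Big).
\]
Here $G_i^{(1)}$ and $G_i^{(2)}$ are read as square Gaussian matrices, as in the proof of Theorem \ref{thm:thm1}.

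\textbf{Step 2.} Treat the $Y$-part first, conditionally on the $X$-part. Fix $G_i^{(2)}$, and regard $\varphi$ together with the $X$-quadratic term as a deterministic function of $(X,Y)$ that is linear in $Y$ through $\varphi$. The comparison in \cite{Stojnicleh26} then applies with these terms carried along as an additive deterministic part. That comparison says that for a PSD coefficient $Q=F_{i,Y}F_{i,Y}^T$, the process $\frac{1}{\sqrt{2}}\tr(Y^TG^{(1)}YQ)$ over $Y^TY=I$ has, at the $\sqrt{n}$ scale, the same leading-order maximum behaviour as the linear process $\tr(G^{(4)}YQ)$. The point to exploit is that the bound there is obtained via Gordon/Slepian-type interpolation, and such interpolation survives the addition of any deterministic function of the index, just as $\varphi$ was carried through (\ref{eq:mt5a1a0})--(\ref{eq:mt5a1a0a0}). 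Concentration (Remark \ref{rem:rem0}) then lets us pass from the conditional statement to expectations. Repeating the argument in $X$ with $G_i^{(2)}\to G_i^{(5)}$ gives the right-hand side of (\ref{eq:nqthm1eq2}).

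\textbf{Main obstacle.} In \cite{Stojnicleh26}, the equality between the quadratic and linear forms is proved for the pure problem, and the direction needed here, quadratic $\geq$ linear, may partly rest on an explicit evaluation of both maxima rather than a pure comparison. If a direct Slepian comparison is not available, I would use a Lagrangian/convex-duality route instead. Write the maximization over $Y$ with the $(X,\varphi)$-part fixed as $\min_{\Gamma}\max_Y$. Show that for each fixed dual variable the quadratic and linear problems have matching limits, using the derivation (26)--(35) of \cite{Stojnicleh26} with the shift $\varphi$ added. Then use concavity in the shift to exchange the limit and the optimization. Uniform concentration over the compact sets $\{X^TX=I\}$ and $\{Y^TY=I\}$ controls the error terms.

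Finally, note that the right-hand side depends on the Gaussians only through the $k_1\times k_1$ and $k_2\times k_2$ projections $G_i^{(4)}Y$ and $G_i^{(5)}X$. This is consistent with the dimensions in the statement and matches the structure of Theorem \ref{thm:thm1a0}.
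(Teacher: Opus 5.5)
Your outline is the paper's route: its entire proof is that the claim ``follows automatically from Theorems \ref{thm:thm1} and \ref{thm:thm1a0}.'' That is your Step 1, which is fine, followed by replacing each quadratic form with its linear counterpart inside the joint maximization. Once $\varphi\neq 0$, that replacement is the whole content of the statement, and neither of your mechanisms delivers it.

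Set $P_i=F_{i,Y}F_{i,Y}^T$, $q_Y(Y)=\frac{1}{\sqrt{2}}\sum_i\tr(Y^TG_i^{(1)}YP_i)$, $\ell_Y(Y)=\sum_i\tr(G_i^{(4)}YP_i)$ and $W=Y_1^TY_2$. Then $\mE q_Y(Y_1)q_Y(Y_2)=\frac12\sum_i\tr(P_iWP_iW^T)$ and $\mE \ell_Y(Y_1)\ell_Y(Y_2)=\sum_i\tr(P_i^2W)$. The variances are $\frac12\sum_i\tr P_i^2$ versus $\sum_i\tr P_i^2$, so Slepian does not apply. Moreover,
\[
\mE\big(\ell_Y(Y_1)-\ell_Y(Y_2)\big)^2-\mE\big(q_Y(Y_1)-q_Y(Y_2)\big)^2=\sum_{i=1}^{l}\big\|P_i^{1/2}WP_i^{1/2}-P_i\big\|_F^2\geq 0 ,
\]
and the same holds for the independent $X$-parts $q_X,\ell_X$. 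The comparison that genuinely tolerates a common deterministic mean such as $\varphi$ is Sudakov--Fernique in the form of \cite{Vitale00}. It yields $\mE\max(\varphi+q_Y+q_X)\leq\mE\max(\varphi+\ell_Y+\ell_X)$, which is the \emph{reverse} of (\ref{eq:nqthm1eq2}). So the lower-bound half of the equality in Theorem \ref{thm:thm1a0} is not delivered by interpolating with $\varphi$ carried along. That theorem concerns two \emph{decoupled} maxima, and equality of decoupled maxima does not imply the inequality once $X$ and $Y$ are coupled through $\varphi=\sqrt{n}\tr(A_0^TR_Y^TY^TXR_X)$.

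Your Lagrangian fallback fails at the claim of ``matching limits for each fixed dual variable.'' Take $k_1=l=1$, $P=p>0$, $\varphi=0$, $G=G^{(1)}$ and $g=(G^{(4)})^T\in\mR^n$. The linear dual $\min_{\gamma>0}\big(\frac{p^2\|g\|^2}{4\gamma}+\gamma\big)$ is attained at $\gamma\approx p\sqrt{n}/2$. By contrast, $\max_{y}\big(\frac{p}{\sqrt2}y^TGy-\gamma\|y\|^2\big)+\gamma=+\infty$ unless $\gamma\geq\frac{p}{\sqrt2}\lambda_{\max}\big(\frac{G+G^T}{2}\big)\approx p\sqrt{n}$. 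Both optimal values are $\approx p\sqrt{n}$, but they sit at different dual points, so no per-$\Gamma$ identification or limit exchange is possible. Strong duality for this nonconvex two-block Stiefel problem would also need its own proof.

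What is missing is a direct asymptotic lower bound for the coupled quadratic problem. One route is to condition on the $X$-side randomness and evaluate the $Y$-subproblem, a trust-region problem with linear shift $h(X)$, through the semicircle resolvent. You would then need to show it dominates the corresponding linear value \emph{uniformly} over the $h(X)$ that the joint maximization can align with the top eigenspace of $G^{(1)}+G^{(1)T}$. For $k_1=1$, $P=1$ and $h$ independent of $G^{(1)}$ with $\|h\|=\eta\sqrt{n}$, both values equal $\sqrt{n(1+\eta^2)}$. The uniformity and the matrix case $k_1,l>1$ are the real work, and the paper's one-line proof does not supply them either.
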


\begin{proof}
Follows automatically from Theorems \ref{thm:thm1} and \ref{thm:thm1a0}. 
\end{proof}

\begin{remark}
\label{rem:rem1} Two differences compared to Theorem \ref{thm:thm1a0} should be noted: (i) a priori decoupling of optimizations over $X$ and $Y$ is not applicable any longer; and (ii) $m\neq n$ would not make much sense.
\end{remark}

To analyze the above optimization, we start by setting
\begin{eqnarray}
   \label{eq:nqhrd1}
 \tilde{L} & \triangleq &  
    \max_{\substack{Y\in\mR^{n\times k_1}, Y^TY =I\\ X\in\mR^{n\times k_2}, X^TX =I}} \lp \varphi + \sum_{i=1}^{l}  \tr\lp G_i^{(4)} Y F_{i,Y}F_{i,Y}^T \rp  + \sum_{i=1}^{l} \tr\lp G_i^{(5)} X F_{i,X}F_{i,X}^T \rp  \rp  
       \nonumber \\
     &  = &
     -      \min_{\substack{Y\in\mR^{n\times k_1}, Y^TY =I\\ X\in\mR^{n\times k_2}, X^TX =I}} \lp -\varphi + \sum_{i=1}^{l}  \tr\lp G_i^{(4)} Y F_{i,Y}F_{i,Y}^T \rp  + \sum_{i=1}^{l} \tr\lp G_i^{(5)} X F_{i,X}F_{i,X}^T \rp  \rp .
\end{eqnarray}
Then for $\Gamma_X=\Gamma_X^T$ and $\Gamma_Y=\Gamma_Y^T$ we write the Lagrangian 
\begin{eqnarray}
   \label{eq:nqhrd2}
 \cL & = &  -\sqrt{n} \tr(A_0^T R_Y^T Y^TX  R_X)  + \sum_{i=1}^{l}  \tr\lp G_i^{(4)} Y F_{i,Y}F_{i,Y}^T \rp  + \sum_{i=1}^{l} \tr\lp G_i^{(5)} X F_{i,X}F_{i,X}^T \rp  
 \nonumber \\
& &  + \tr(\Gamma_X X^TX )  - \tr(\Gamma_X)  + \tr(\Gamma_Y Y^TY )  - \tr(\Gamma_Y)  .
\end{eqnarray}
Combining (\ref{eq:nqhrd1}) and  (\ref{eq:nqhrd2})  with the strong duality, we have
\begin{eqnarray}
   \label{eq:nqhrd3}
 \tilde{L}  = - \min_{\substack{X\in\mR^{n\times k_2}\\ Y\in\mR^{n\times k_1} }}
 \max_{\substack{\Gamma_X=\Gamma_X^T \\ \Gamma_Y=\Gamma_Y^T } } \cL
 = 
- \max_{\substack{\Gamma_X=\Gamma_X^T\succeq 0 \\ \Gamma_Y=\Gamma_Y^T\succeq 0} }
 \min_{\substack{X\in\mR^{n\times k_2}\\ Y\in\mR^{n\times k_1} }}
  \cL.
\end{eqnarray}
 Let
\begin{eqnarray}
   \label{eq:nqhrd3a0}
\cZ_1^T & = &  \sum_{i=1}^{l} F_{i,Y}F_{i,Y}^TG_i^{(4)} 
\nonumber \\
\cZ_2^T & = &  \sum_{i=1}^{l} F_{i,X}F_{i,X}^TG_i^{(5)} 
\nonumber \\
\cZ_3 & = &  -\sqrt{n} R_XA_0^TR_Y^T.
 \end{eqnarray}
Then  (\ref{eq:nqhrd2}) can be rewritten as
\begin{eqnarray}
   \label{eq:nqhrd3a0a0}
 \cL  =   \tr(\cZ_3Y^TX )  + \tr\lp \cZ_1^T Y \rp  + \tr\lp \cZ_2^T X \rp
  + \tr(\Gamma_X X^TX )  - \tr(\Gamma_X)  + \tr(\Gamma_Y Y^TY )  - \tr(\Gamma_Y)  .
\end{eqnarray}
After taking  $Y$ and $X$ derivatives, we obtain
\begin{eqnarray}
   \label{eq:nql2hrd4}
 \frac{d\cL}{dY } & =  & \cZ_{1} + 2Y \Gamma_Y   
 + X\cZ_3
 \nonumber \\
 \frac{d\cL}{dX } & =  & \cZ_{2} + 2X\Gamma_X   
 + Y\cZ_3^T
.
\end{eqnarray}
Equalling  derivatives in (\ref{eq:nql2hrd4}) to zero gives 
\begin{eqnarray}
   \label{eq:nql2hrd4a1}
\cZ_{1} + 2Y \Gamma_Y   
 + X\cZ_3 & = & 0
 \nonumber \\
\cZ_{2} + 2X\Gamma_X   
 + Y\cZ_3^T  & = & 0
,
\end{eqnarray}
and
\begin{eqnarray}
   \label{eq:nql2hrd4a1a0}
 Y & = & -\frac{1}{2}  \cZ_{1} \Gamma_Y^{-1}  
 -\frac{1}{2} X\cZ_3\Gamma_Y^{-1}
\nonumber \\
X & = & -\frac{1}{2}  \cZ_{2} \Gamma_X^{-1}  
 -\frac{1}{2} Y\cZ_3^T\Gamma_X^{-1}
.
\end{eqnarray}
We then find
\begin{eqnarray}
   \label{eq:nql2hrd4a1a0}
 Y \Gamma_Y
 & = &  -\frac{1}{2}  \cZ_{1} 
 -\frac{1}{2} 
 \lp 
 -\frac{1}{2}  \cZ_{2} \Gamma_X^{-1}  
 -\frac{1}{2} Y\cZ_3^T\Gamma_X^{-1}
 \rp \cZ_3   
\nonumber \\
& = &  -\frac{1}{2}  \cZ_{1}  
  +\frac{1}{4}  \cZ_{2} \Gamma_X^{-1}\cZ_3   
 +\frac{1}{4} Y\cZ_3^T\Gamma_X^{-1}
  \cZ_3     
,
\end{eqnarray}
and
\begin{eqnarray}
   \label{eq:nql2hrd4a1a1}
 Y 
 & = &  \lp -\frac{1}{2}  \cZ_{1}  
  +\frac{1}{4}  \cZ_{2} \Gamma_X^{-1}\cZ_3  \rp 
 \lp  \Gamma_Y -\frac{1}{4} \cZ_3^T\Gamma_X^{-1}
  \cZ_3 \rp^{-1}     
.
\end{eqnarray}
Analogously
\begin{eqnarray}
   \label{eq:nql2hrd4a1a2}
 X 
 & = &  \lp -\frac{1}{2}  \cZ_{2}  
  +\frac{1}{4}  \cZ_{1} \Gamma_Y^{-1}\cZ_3^T  \rp 
 \lp  \Gamma_X -\frac{1}{4} \cZ_3\Gamma_Y^{-1}
  \cZ_3^T \rp^{-1}     
.
\end{eqnarray}
For the above $X$ and  $Y$, we find from (\ref{eq:nql2hrd4a1}) 
\begin{eqnarray}
   \label{eq:nql2hrd4a2a3}
  \cZ_{1}^TY & = & -2\Gamma_Y Y^TY  
 - \cZ_3^T X^TY
 \nonumber \\
  \cZ_{2}^TX& = & -2\Gamma_X X^TX  
 - \cZ_3 Y^TX.
\end{eqnarray}
A combination of  (\ref{eq:nqhrd3a0a0}) and (\ref{eq:nql2hrd4a2a3}) then gives
\begin{eqnarray}
   \label{eq:nql2hrd4a2a4}
\min_{Y,X}  \cL 
& = &
  - \tr\lp \Gamma_Y Y^TY\rp 
  - \tr\lp \Gamma_X X^TX\rp - \tr( \Gamma_Y )- \tr( \Gamma_X )
  - \tr\lp  \cZ_3Y^TX \rp   
\nonumber \\
& = &
\frac{1}{2}\tr\lp\cZ_{1}^TY\rp + \frac{1}{2}\tr\lp \cZ_{2}^TX \rp- \tr( \Gamma_Y )- \tr( \Gamma_X )
\nonumber \\
& = &
\frac{1}{2}\tr\lp
\lp -\frac{1}{2} \cZ_{1}^T  \cZ_{1}  
  +\frac{1}{4}  \cZ_{1}^T \cZ_{2} \Gamma_X^{-1}\cZ_3  \rp 
 \lp  \Gamma_Y -\frac{1}{4} \cZ_3^T\Gamma_X^{-1}
  \cZ_3 \rp^{-1}     
\rp 
\nonumber \\
& & 
+ \frac{1}{2}\tr\lp 
\lp -\frac{1}{2}  \cZ_{2}^T\cZ_{2}  
  +\frac{1}{4}  \cZ_{2}^T\cZ_{1} \Gamma_Y^{-1}\cZ_3^T  \rp 
 \lp  \Gamma_X -\frac{1}{4} \cZ_3\Gamma_Y^{-1}
  \cZ_3^T \rp^{-1}     
 \rp - \tr( \Gamma_Y )- \tr( \Gamma_X ) .
 \end{eqnarray}
The law of large numbers, concentrations, and (\ref{eq:nqhrd3})-(\ref{eq:nql2hrd4a2a4}) then give in the considered dimensional scenarios
\begin{equation}
   \label{eq:nql2hrd4a2a4a0}
\lim_{n\rightarrow \infty} \mE\frac{\tilde{L}}{\sqrt{n}}   =  
 -\max_{\substack{\Gamma_X=\Gamma_X^T\succeq 0 \\ \Gamma_Y=\Gamma_Y^T \succeq 0 } }
 \lim_{n\rightarrow \infty} \mE\frac{ \min_{X,Y} 
  \cL}{\sqrt{n}} 
  = 
- \max_{\substack{\Gamma_X=\Gamma_X^T\succeq 0 \\ \Gamma_Y=\Gamma_Y^T\succeq 0} }
\lp   \lim_{n\rightarrow \infty} \mE\frac{\tilde{L}_1}{\sqrt{n}} 
   +
    \lim_{n\rightarrow \infty} \mE\frac{\tilde{L}_2}{\sqrt{n}} 
- \tr( \Gamma_Y )- \tr( \Gamma_X ) \rp
,
\end{equation}
where
\begin{eqnarray}
   \label{eq:nql2hrd4a2a5}
 \tilde{L}_1& = &
\frac{1}{2}\tr\lp
\lp -\frac{1}{2} \cZ_{1}^T  \cZ_{1}  
  +\frac{1}{4}  \cZ_{1}^T \cZ_{2} \Gamma_X^{-1}\cZ_3  \rp 
 \lp  \Gamma_Y -\frac{1}{4} \cZ_3^T\Gamma_X^{-1}
  \cZ_3 \rp^{-1}     
\rp 
\nonumber \\
\tilde{L}_2
 &  = & 
 \frac{1}{2}\tr\lp 
\lp -\frac{1}{2}  \cZ_{2}^T\cZ_{2}  
  +\frac{1}{4}  \cZ_{2}^T\cZ_{1} \Gamma_Y^{-1}\cZ_3^T  \rp 
 \lp  \Gamma_X -\frac{1}{4} \cZ_3\Gamma_Y^{-1}
  \cZ_3^T \rp^{-1}     
 \rp .
\end{eqnarray}
Keeping in mind (\ref{eq:nqhrd3a0}) and cosmetic scalings $\Gamma_X=\Gamma_X\sqrt{n}$ and $\Gamma_Y=\Gamma_Y\sqrt{n}$, we further find
\begin{eqnarray}
   \label{eq:nql2hrd4a2a6}
\lim_{n\rightarrow \infty} \mE\frac{\tilde{L}_1}{\sqrt{n}}
 & = &
\lim_{n\rightarrow \infty} \mE\frac{1}{\sqrt{n}}
\frac{1}{2}\tr\lp
\lp -\frac{1}{2} \cZ_{1}^T  \cZ_{1}  
  +\frac{1}{4}  \cZ_{1}^T \cZ_{2} \Gamma_X^{-1}\cZ_3  \rp 
 \lp  \Gamma_Y -\frac{1}{4n} \cZ_3^T\Gamma_X^{-1}
  \cZ_3 \rp^{-1}     
\rp 
\nonumber \\
 & = &
-\frac{1}{4} \lim_{n\rightarrow \infty} \mE\frac{1}{n}
 \tr\lp
  \cZ_{1}^T  \cZ_{1}   
 \lp  \Gamma_Y -\frac{1}{4n} \cZ_3^T\Gamma_X^{-1}
  \cZ_3 \rp^{-1}     
\rp 
\nonumber \\
 & = &
-\frac{1}{4} \lim_{n\rightarrow \infty} \mE\frac{1}{n}
 \tr\lp
  \sum_{i=1}^{l} F_{i,Y}F_{i,Y}^TG_i^{(4)}  \lp  \sum_{i=1}^{l} F_{i,Y}F_{i,Y}^TG_i^{(4)}  \rp^T   
 \lp  \Gamma_Y -\frac{1}{4n} \cZ_3^T\Gamma_X^{-1}
  \cZ_3 \rp^{-1}     
\rp 
\nonumber \\
 & = &
-\frac{1}{4}    
 \tr\lp
  \sum_{i=1}^{l} \lp F_{i,Y}F_{i,Y}^T \rp^2  
 \lp  \Gamma_Y -\frac{1}{4} R_YA_0R_X^T \Gamma_X^{-1}
  R_XA_0^TR_Y^T \rp^{-1}     
\rp 
.
 \end{eqnarray}
Taking again $\Gamma_X\rightarrow R_XZ_XR_X^T$ and $\Gamma_Y\rightarrow R_YZ_YR_Y^T$ with $Z_X=Z_X^T\succeq 0$ and $Z_Y=Z_Y^T\succeq 0$, and recalling 
\begin{eqnarray}
   \label{eq:nqhrd6a15a0}
\lp F_{i,Y}F_{i,Y}^T\rp^2=F_i^TF_i= R_YA_iR_X^T R_XA_i^TR_Y^T \quad\mbox{and}\quad
\lp F_{i,X}F_{i,X}^T\rp^2=F_iF_i^T= R_XA_i^TR_Y^TR_YA_iR_X^T , 
\end{eqnarray}
we obtain
\begin{eqnarray}
   \label{eq:nql2hrd4a2a7}
\lim_{n\rightarrow \infty} \mE\frac{\tilde{L}_1}{\sqrt{n}}
  & = &
-\frac{1}{4}  
 \tr\lp
  \sum_{i=1}^{l} R_YA_iR_X^T R_XA_i^TR_Y^T  
 \lp  R_YZ_YR_Y^T -\frac{1}{4} R_YA_0 Z_X^{-1}
  A_0^TR_Y^T \rp^{-1}     
\rp 
\nonumber \\
  & = &
-\frac{1}{4}    
 \tr\lp
  \sum_{i=1}^{l} A_iR_X^T R_XA_i^T   
 \lp   Z_Y -\frac{1}{4} A_0 Z_X^{-1}
  A_0^T \rp^{-1}     
\rp 
\nonumber \\
  & = &
-\frac{1}{4}   
 \tr\lp
  R_X \lp \sum_{i=1}^{l} A_i^T   
 \lp   Z_Y -\frac{1}{4} A_0 Z_X^{-1}
  A_0^T \rp^{-1}     
 A_i \rp R_X^T
\rp.
 \end{eqnarray}
Repeating the above for $\tilde{L}_2$, gives in a completely analogous manner
\begin{eqnarray}
   \label{eq:nql2hrd4a2a7}
\lim_{n\rightarrow \infty} \mE\frac{\tilde{L}_2}{\sqrt{n}}
   & = &
-\frac{1}{4}    
 \tr\lp
  R_Y \lp \sum_{i=1}^{l} A_i   
 \lp   Z_X -\frac{1}{4} A_0^T Z_Y^{-1}
  A_0 \rp^{-1}     
 A_i^T \rp R_Y^T
\rp.
 \end{eqnarray}
Combining (\ref{eq:nqhrd3}), (\ref{eq:nql2hrd4a2a6}), and  (\ref{eq:nql2hrd4a2a7}) with scalings $Z_X\rightarrow \frac{1}{2} Z_X$ and $Z_Y\rightarrow \frac{1}{2} Z_Y$, we arrive at
\begin{eqnarray}
   \label{eq:nqhrd4}
\lim_{n\rightarrow \infty} \mE\frac{\tilde{L}}{\sqrt{n}}   
& = &  
 \frac{1}{2} \min_{\substack{Z_X=Z_X^T\succeq 0\\ Z_Y=Z_Y^T\succeq 0} }
   \Bigg ( \Bigg.   
 \tr\lp
  R_X \lp \sum_{i=1}^{l} A_i^T   
 \lp   Z_Y -  A_0 Z_X^{-1}
  A_0^T \rp^{-1}     
 A_i \rp R_X^T
\rp
\nonumber \\
& & +   
 \tr\lp
  R_Y \lp \sum_{i=1}^{l} A_i   
 \lp   Z_X -  A_0^T Z_Y^{-1}
  A_0 \rp^{-1}     
 A_i^T \rp R_Y^T
\rp 
\nonumber \\
& & 
+\tr(R_Y Z_YR_Y^T)
+\tr(R_X Z_X R_X^T)
   \Bigg .\Bigg )
.
\end{eqnarray}
From  (\ref{eq:kronrep5}), (\ref{eq:nqthm1eq2}), and  (\ref{eq:nqhrd4}), we then have 
\begin{eqnarray}\label{eq:nqhrd9a0}
 \lim_{n\rightarrow \infty}\mE\s_n(H) & =  & \max_{\tr(R_X^TR_X)=R_Y^TR_Y)=1} \lp    \lim_{n\rightarrow \infty} \frac{1}{\sqrt{n}} \mE \xi_a \rp  
\nonumber \\
& \geq & 
\frac{1}{2} \max_{\tr(R_X^TR_X)=\tr(R_Y^TR_Y)=1} 
 \Bigg ( \Bigg . 
     \min_{\substack{Z_X=Z_X^T\succeq 0\\ Z_Y=Z_Y^T\succeq 0} }
   \Bigg ( \Bigg.   
 \tr\lp
  R_X \lp \sum_{i=1}^{l} A_i^T   
 \lp   Z_Y -  A_0 Z_X^{-1}
  A_0^T \rp^{-1}     
 A_i \rp R_X^T
\rp
\nonumber \\
& & +   
 \tr\lp
  R_Y \lp \sum_{i=1}^{l} A_i   
 \lp   Z_X -  A_0^T Z_Y^{-1}
  A_0 \rp^{-1}     
 A_i^T \rp R_Y^T
\rp 
+\tr(R_Y Z_YR_Y^T)
+\tr(R_X Z_X R_X^T)
   \Bigg .\Bigg )
   \Bigg . \Bigg ) 
   \nonumber \\
& = & 
\frac{1}{2}     \min_{\substack{Z_X=Z_X^T\succeq 0\\ Z_Y=Z_Y^T\succeq 0} }
 \Bigg ( \Bigg . 
  \lambda_n\lp
   Z_X + \sum_{i=1}^{l} A_i^T   
 \lp   Z_Y -  A_0 Z_X^{-1}
  A_0^T \rp^{-1}     
 A_i \rp
\nonumber \\
& & +   
 \lambda_n\lp
  Z_Y + \lp \sum_{i=1}^{l} A_i   
 \lp   Z_X -  A_0^T Z_Y^{-1}
  A_0 \rp^{-1}     
 A_i^T \rp 
\rp 
    \Bigg . \Bigg ) 
   .
\end{eqnarray}

The above is summarized in the following theorem.
\begin{theorem} Assume the setup of Theorem \ref{thm:thm1} (with $A_0$ allowed to be nonzero). Then
\label{thm:nqthm2}  
\begin{eqnarray}\label{eq:nqthm2eq1}
 \lim_{n\rightarrow \infty}\mE\s_n(H)& \geq  & 
\frac{1}{2}     \min_{\substack{Z_X=Z_X^T\succeq 0\\ Z_Y=Z_Y^T\succeq 0} }
 \Bigg ( \Bigg . 
  \lambda_n\lp
   Z_X + \sum_{i=1}^{l} A_i^T   
 \lp   Z_Y -  A_0 Z_X^{-1}
  A_0^T \rp^{-1}     
 A_i \rp
\nonumber \\
& & +   
 \lambda_n\lp
  Z_Y + \lp \sum_{i=1}^{l} A_i   
 \lp   Z_X -  A_0^T Z_Y^{-1}
  A_0 \rp^{-1}     
 A_i^T \rp 
\rp 
    \Bigg . \Bigg ) \triangleq \bar{\rho}_n(\cA)
   .
\end{eqnarray}
\end{theorem}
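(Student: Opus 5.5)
The theorem collects the chain of reductions already carried out in this subsection, so the plan is to assemble them in order and justify the two non-routine exchanges.

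\emph{Step 1: reduce to the random dual $\tilde{L}$.} Start from the representation (\ref{eq:kronrep5}): for fixed $R_X,R_Y$ with $\tr(R_X^TR_X)=\tr(R_Y^TR_Y)=1$, we have $\s_n(H)=\max \frac{1}{\sqrt{n}}\xi_a$. Interchanging the limit with the outer maximum over the compact, finite-dimensional set of $(R_X,R_Y)$ is justified by concentration (Remark \ref{rem:rem0}); in fact only the inequality ``$\geq$'' is needed, and that direction is trivial, since a pointwise lower bound on $\mE\frac{1}{\sqrt{n}}\xi_a$ at any fixed $(R_X,R_Y)$ lower-bounds $\mE\max$. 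Theorem \ref{thm:nqthm1a0} (which rests on the Gordon/Slepian comparison of Theorem \ref{thm:thm1} and on Theorem \ref{thm:thm1a0}) then gives $\lim\frac{1}{\sqrt{n}}\mE\xi_a\geq \lim\frac{1}{\sqrt{n}}\mE\tilde{L}$.

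\emph{Step 2: evaluate $\tilde{L}$ via the Lagrangian.} Take the Lagrangian (\ref{eq:nqhrd2}) and invoke strong duality as in (\ref{eq:nqhrd3}); see below for why this holds. Solve the stationarity equations (\ref{eq:nql2hrd4a1}) for $X,Y$, using Schur-complement type inverses $\Gamma_Y-\frac14\cZ_3^T\Gamma_X^{-1}\cZ_3$. Plug back in to obtain (\ref{eq:nql2hrd4a2a4}). Then apply the law of large numbers to the quadratic forms in $G_i^{(4)},G_i^{(5)}$. The cross terms $\cZ_1^T\cZ_2$ vanish in the limit because the two Gaussians are independent and the terms are normalized by $n$ rather than $\sqrt{n}$, while $\frac1n\cZ_1^T\cZ_1\to\sum_i (F_{i,Y}F_{i,Y}^T)^2=\sum_i F_i^TF_i$. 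Substituting $\Gamma_X=R_XZ_XR_X^T$ and $\Gamma_Y=R_YZ_YR_Y^T$, cancelling the $R$'s in the inverse, and rescaling by $\frac12$ yields (\ref{eq:nqhrd4}).

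\emph{Step 3: exchange min and max and recognize eigenvalues.} The objective in (\ref{eq:nqhrd4}) is linear in $S_X=R_X^TR_X$ and $S_Y=R_Y^TR_Y$, which range over spectraplexes (convex and compact). In $(Z_X,Z_Y)$ the objective is convex on the domain where $Z_Y\succ A_0Z_X^{-1}A_0^T$. This follows by writing $\tr(M(Z_Y-A_0Z_X^{-1}A_0^T)^{-1})$ through the joint block matrix $\begin{bmatrix}Z_X & A_0^T\\ A_0 & Z_Y\end{bmatrix}\succ 0$, whose inverse is operator convex and whose diagonal blocks are exactly the two Schur inverses. Sion's minimax theorem then swaps $\max_{R}$ and $\min_{Z}$. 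Finally, $\max_{\tr S_X=1,S_X\succeq0}\tr(S_XM)=\lambda_n(M)$, and likewise for $Y$, which gives $\bar\rho_n(\cA)$.

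\emph{Main obstacle.} The delicate point is the strong duality in (\ref{eq:nqhrd3}), since the problem is a nonconvex maximization over two coupled Stiefel manifolds. I would argue it asymptotically. Because $\cZ_1,\cZ_2$ are essentially independent Gaussian directions of norm $\sim\sqrt n$, the stationary $X,Y$ from (\ref{eq:nql2hrd4a1a1})--(\ref{eq:nql2hrd4a1a2}) can be made to satisfy $X^TX=Y^TY=I$ in the limit by choosing $\Gamma$ appropriately, so the duality gap vanishes. Alternatively, weak duality alone gives $\tilde L\le$ dual, so for the lower bound one must instead exhibit feasible $X,Y$. I would try constructing them explicitly as these stationary points, after orthonormalizing with an $o(1)$ correction.
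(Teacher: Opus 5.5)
Your proposal is correct and follows the paper's own route step for step: reduction to $\tilde{L}$ via Theorem \ref{thm:nqthm1a0}, the Lagrangian (\ref{eq:nqhrd2}) with strong duality, explicit stationary points plus the law of large numbers, the substitution $\Gamma_X=R_XZ_XR_X^T$, $\Gamma_Y=R_YZ_YR_Y^T$ with the $\frac{1}{2}$ rescaling, and the max--min exchange that produces the two $\lambda_n$ terms; the only difference is that the paper merely asserts strong duality and, in this non-centered case, the exchange. Your added justifications are sound and close those steps: joint convexity via the inverse of $\begin{bmatrix} Z_X & A_0^T\\ A_0 & Z_Y\end{bmatrix}\succ 0$ plus Sion legitimizes the swap (and that block-positive set is the right domain, since over merely $Z_X,Z_Y\succeq 0$ the Schur complements can turn negative and the stated minimum can be $-\infty$, already for nonzero scalars $A_0,A_1$), while your stationary-point idea closes the duality gap exactly rather than only asymptotically: when $[\cZ_2\ \cZ_1]$ has full column rank, the dual function tends to $-\infty$ both at the boundary of its domain and at infinity, so it has an interior maximizer, and there, by Danskin's theorem, the unique minimizer $(X,Y)$ satisfies $X^TX=Y^TY=I$.
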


\begin{proof}
  Follows from the preceding discussion.
\end{proof}

\begin{remark}
  \label{rem:rem2}
As stated earlier, to make writing easier, we assume throughout the analysis that the choice of $A_i$ is such that distinguishing between having $\min$ or alternatively $\inf$ in (\ref{eq:nqthm2eq1}) (and all other optimization programs similar to it), is of no practical relevance.
 \end{remark} 

\begin{remark}
  \label{rem:rem3}
 For $A_0=0$, one deduces the square variant ($\alpha=1$) of Corollary \ref{cor:cor2a0}.
\end{remark}

\vspace{.15in}

\noindent \underline{\textbf{\emph{Practical implementation}}} 
\vspace{.05in}

Theorem \ref{thm:nqthm2} provides an analytical characterization of $\s_n(H)$. Moreover, one can actually practically estimate the left-hand side of (\ref{eq:nqthm2eq1}). Namely, the idea is that for any admissible $Z_X$ and $Z_Y$, we introduce new variables $B_X=B_X^T$ and $B_Y=B_Y^T$ such that 
\begin{eqnarray}
\label{eq:nqprfeq1}
 Z_Y -  A_0 Z_X^{-1}
  A_0^T \succeq B_Y \quad \mbox{and} \quad 
 Z_X -  A_0^T Z_Y^{-1}
  A_0 \succeq B_X.
  \end{eqnarray}
We then utilizes again the Schur complement positive semi-definiteness characterization to formulate the following SDP 
\begin{eqnarray}
\label{eq:nqprfeq2}
\bar{\rho}_n(\cA) = \min_{Z_X,Z_Y,B_X,B_Y}  & & \lambda_1+\lambda_2
\nonumber \\
\mbox{subject to } & & 
\begin{bmatrix}
\lambda_1 I_{k_1\times k_1} & \begin{bmatrix}
      A_1 & A_2 & \dots & A_l 
    \end{bmatrix} \\ \begin{bmatrix}
      A_1 & A_2 & \dots & A_l 
    \end{bmatrix}^T &  I_{l\times l}\otimes B_X 
\end{bmatrix} \succeq 0
  \nonumber \\
& & \begin{bmatrix}
\lambda_2  I_{k_2\times k_2} & \begin{bmatrix}
      A_1^T & A_2^T & \dots & A_l^T 
    \end{bmatrix} \\ \begin{bmatrix}
      A_1^T & A_2^T & \dots & A_l^T 
    \end{bmatrix}^T &  I_{l\times l}\otimes B_Y 
\end{bmatrix} \succeq 0
  \nonumber \\
& & 
\begin{bmatrix}
      Z_Y -B_Y & A_0 \\A_0^T & Z_X
    \end{bmatrix}^T \succeq 0
   \nonumber \\
 & & 
 \begin{bmatrix}
      Z_X -B_X & A_0^T \\A_0 & Z_Y
    \end{bmatrix}^T  \succeq 0
   \nonumber \\
 & &  Z_X=Z_X^T, Z_Y=Z_Y^T, B_X=B_X^T, B_Y=B_Y^T .
  \end{eqnarray}

For  $k_1=2$,  $k_2=3$,  $l=3$ and a particular choice  of $A_i$
 \begin{equation}
 \label{eq:simeq1}
A_0 = \begin{bmatrix}
.2 &  1 & -2.3 \\ -.2 & 2.8 & -1
    \end{bmatrix},  \quad 
A_1 = \begin{bmatrix}
 2 & -1 & .3 \\ -2  & 3  &-2 
    \end{bmatrix},  \quad 
A_2 = \begin{bmatrix}
1 & -1 & 1 \\ .3  &  -1  & -.7 
    \end{bmatrix},  \quad 
A_3 = \begin{bmatrix}
1.4 & -2 & 1 \\ .5 & -1 & -2
    \end{bmatrix}, 
   \end{equation}
the obtained results are shown in Table \ref{tab:tab1}. As can be seen, already for fairly small $n$ (on the order of a few hundreds), simulated values approach theoretical predictions.

\begin{table}[h]
\caption{$\mE(\s_n(H))$ and $\mbox{Std}(\s_n(H))$ as a function of $n$; \textbf{theory}/\bl{\textbf{simulated}}}\vspace{.1in}
\centering
\def\arraystretch{1.2}
\begin{tabular}{||c||c|c|c|c|c||}\hline\hline
 \hspace{-0in}$n$                                              & $100$    &  $200$ &  $500$     & $1000$  & $\infty$ \\ \hline\hline
$\mE(\s_n(H))$                                       & $ \bl{\mathbf{10.942}}   $  & $   \bl{\mathbf{11.020}}  $  & $   \bl{\mathbf{11.065}} $  
& $ \bl{\mathbf{11.078}} $  & $ \mathbf{11.119} $ \\ \hline
$\mbox{Std}(\s_n(H))$                                       & $ \bl{\mathbf{.150}} $  & $   \bl{\mathbf{ .094 }} $  & $ \bl{\mathbf{.048}} $  
& $ \bl{\mathbf{.033}} $  & $ \mathbf{0} $ \\ \hline\hline
  \end{tabular}
\label{tab:tab1}
\end{table}

It is worth noting that the above also applies to the $A_0=0$ scenario. In that case, the third and fourth constraints are redundant and can be removed, given that $B_X=Z_X$ and $B_Y=Z_Y$. This establishes a square alternative to (\ref{eq:numprfeq2}). To obtain the non-square alternative, it is sufficient to multiply the $A_i$ terms in the second constraint by $\sqrt{\alpha}$. Whether this is a preferable alternative depends on practical numerical implementations; while the SDP in (\ref{eq:nqprfeq2}) is larger, it possesses a very particular sparse structure.

In \cite{ParHand25}, the singular values of free operators are studied in general asymmetric scenarios. The main results, Theorem 1.2 and Corollary 1.3, provide convenient analytical characterizations. Interestingly, the optimizing mechanism in (\ref{eq:nqprfeq2}) allows automatic formulation of the SDP analogues for the key objects in Theorem 1.2 and Corollary 1.3 with only trivial adjustments. However, within the general intrinsic freeness context ($n=1$), care must be taken with these formulations as the size of the resulting SDPs ($\sim \max(k_1,k_2)$) is typically very large. This stands in stark contrast to asymptotic freeness, where $n\rightarrow\infty$ and $\max(k_1,k_2)$ need not be large.

\section{Conclusion}
\label{sec:conc}

In \cite{Lehner99}, Lehner obtained elegant analytical characterizations of spectral edges for a class of semicircular free operators. By strong asymptotic freeness \cite{HaagThor05,Schultz05}, these precisely match the spectral edges of Kronecker-Gaussian matrices. Circumventing classical random matrix theory spectral methods, we developed a powerful framework in \cite{Stojnicleh26} to study these objects based on Random Duality Theory (RDT). As a concrete application, we reproved a definite variant of Lehner's formula.

We have further explored the power of RDT by utilizing the framework from \cite{Stojnicleh26}, achieving significant progress in several directions. First, we provide matching lower bounds for Lehner's formula in the indefinite case, where the deterministic matrix coefficients are not necessarily positive semi-definite. Second, we study the asymmetric non-square variants of Kronecker-Gaussian matrices, formulate corresponding RDT analogues for Lehner's formula, and demonstrate that they lower-bound the spectral edges. Finally, we uncover a remarkable interlaced decoupling property within these asymmetric analogues.

The supporting algorithmic and numerical aspects are discussed as well, with three components being of particular relevance: (i)  We demonstrate that the obtained asymmetric Lehner's formula RDT analogues can be cast as semi-definite programs (SDPs). (ii)  We solve these programs numerically in several concrete cases. (iii)  We conduct numerical simulations and compare the results to theoretical predictions. Even though the theoretical analyses assume an infinite dimensional scenario, a striking similarity is observed between the simulations and theory for relatively small problem dimensions (on the order of a few hundreds).

\section*{Acknowledgment}

The author would like to thank Ramon van Handel for a fruitful discussion on several related topics and in particular for pointing out the relevance of the problems studied here.

%
%
%
%
%
%
%

\begin{singlespace}
\bibliographystyle{plain}
\bibliography{nflgscompyxRefs}

\begin{thebibliography}{10}

\bibitem{Adam10}
R.~Adamczak, A.~E. Litvak, A.~Pajor, and N.~Tomczak-Jaegermann.
\newblock Quantitative estimates of the convergence of the empirical covariance
  matrix in log-concave ensembles.
\newblock {\em Journal of the American Mathematical Society}, 23(2):535--561,
  2010.

\bibitem{Adamczak11}
R.~Adamczak, A.~E. Litvak, A.~Pajor, and N.~Tomczak-Jaegermann.
\newblock Sharp bounds on the rate of convergence of the empirical covariance
  matrix.
\newblock {\em Comptes Rendus Mathématique}, 349(3--4):195--200, 2011.

\bibitem{Anderson13}
G.~W. Anderson.
\newblock Convergence of the largest singular value of a polynomial in
  independent {W}igner matrices.
\newblock {\em Annals of Probability}, 41(3B):2103--2181, 2013.

\bibitem{BBP05}
J.~Baik, G.~Ben~Arous, and S.~Peche.
\newblock Phase transition of the largest eigenvalue for non-null complex
  sample covariance matrices.
\newblock {\em The Annals of Probability}, 33(5):1643--1697, 2005.

\bibitem{bandfree24}
A.~S. Bandeira, G.~Cipolloni, D.~Schroder, and R.~van Handel.
\newblock Matrix concentration inequalities and free probability ii.
  {T}wo-sided bounds and applications.
\newblock 2024.
\newblock available online at \bl{\url{http://arxiv.org/abs/2406.11453}}.

\bibitem{BandHand16}
A.~S. Bandeira and R.~van Handel.
\newblock Sharp nonasymptotic bounds on the norm of random matrices with
  independent entries.
\newblock {\em The Annals of Probability}, 44(4):2479--2506, 2016.

\bibitem{bandfree23}
A.S. Bandeira, M.T. Boedihardjo, and R.~van Handel.
\newblock Matrix concentration inequalities and free probability.
\newblock {\em Inventiones Mathematicae}, 234:419--487, 2023.

\bibitem{BarbMM17}
J.~Barbier, N.~Macris, and L.~Miolane.
\newblock The layered structure of tensor estimation and its mutual
  information.
\newblock In {\em 2017 55th Annual Allerton Conference on Communication,
  Control, and Computing (Allerton)}, pages 1056--1063. IEEE, 2017.

\bibitem{BehneReeves22}
J.~K. Behne and G.~Reeves.
\newblock Fundamental limits for rank-one matrix estimation with groupwise
  heteroskedasticity.
\newblock In {\em Proceedings of The 25th International Conference on
  Artificial Intelligence and Statistics}, volume 151 of {\em Proceedings of
  Machine Learning Research}, pages 8650--8672. PMLR, 2022.

\bibitem{Belinschi17}
S.~T. Belinschi and M.~Capitaine.
\newblock Spectral properties of polynomials in independent {W}igner and
  deterministic matrices.
\newblock {\em Journal of Functional Analysis}, 273(12):3901–3963, December
  2017.

\bibitem{BorCollins19}
C.~Bordenave and B.~Collins.
\newblock Eigenvalues of random lifts and polynomials of random permutation
  matrices.
\newblock {\em Ann. of Math. (2)}, 190(3):811--875, 2019.

\bibitem{BordColl24}
C.~Bordenave and B.~Collins.
\newblock Norm of matrix-valued polynomials in random unitaries and
  permutations.
\newblock 2024.
\newblock available online at \bl{\url{http://arxiv.org/abs/2304.05714}}.

\bibitem{Bourgain99}
J.~Bourgain.
\newblock Random points in isotropic convex sets.
\newblock In {\em Convex Geometric Analysis (Berkeley, CA, 1996)}, volume~34 of
  {\em Mathematical Sciences Research Institute Publications}, pages 53--58.
  Cambridge University Press, 1999.

\bibitem{BrailUniv24}
T.~Brailovskaya and R.~van Handel.
\newblock Universality and sharp matrix concentration inequalities.
\newblock {\em Geometric and Functional Analysis}, 34:1003--1045, 2024.

\bibitem{CaiHanZhang22}
T.~T. Cai, R.~Han, and A.~R. Zhang.
\newblock On the non-asymptotic concentration of heteroskedastic {W}ishart-type
  matrix.
\newblock {\em Electronic Journal of Probability}, 27:1--40, 2022.

\bibitem{Cassidy24}
E.~Cassidy.
\newblock Random permutations acting on $k$-tuples have near-optimal spectral
  gap for $k=\mathrm{poly}(n)$.
\newblock 2024.
\newblock available online at \bl{\url{http://arxiv.org/abs/2412.13941}}.

\bibitem{ChenGVTH26}
C.-F. Chen, J.~Garza-Vargas, J.~A. Tropp, and R.~van Handel.
\newblock A new approach to strong convergence.
\newblock {\em Annals of Mathematics}, 203(2):555--602, 2026.

\bibitem{CollinsGP22}
B.~Collins, A.~Guionnet, and F.~Parraud.
\newblock On the operator norm of non-commutative polynomials in deterministic
  matrices and iid {GUE} matrices.
\newblock {\em Cambridge Journal of Mathematics}, 10(1):195--260, 2022.

\bibitem{CollMale14}
B.~Collins and C.~Male.
\newblock The strong asymptotic freeness of {H}aar and deterministic matrices.
\newblock {\em Annales scientifiques de l'{E}cole Normale Superieure},
  47(1):147--163, 2014.

\bibitem{CollYama26}
B.~Collins and Y.~Yamagishi.
\newblock A {S}udakov-{F}ernique proof of {L}ehner-type edge bounds for
  matrix-valued {GUE} sums.
\newblock 2026.
\newblock available online at \bl{\url{http://arxiv.org/abs/2606.21137}}.

\bibitem{DonGavJohn18}
D.~L. Donoho, M.~Gavish, and I.~M. Johnstone.
\newblock Optimal shrinkage of eigenvalues in the spiked covariance model.
\newblock {\em The Annals of Statistics}, 46(4):1742--1778, 2018.

\bibitem{Ducatez24}
R.~Ducatez, A.~Guionnet, and J.~Husson.
\newblock Large deviation principle for the largest eigenvalue of random
  matrices with a variance profile.
\newblock 2024.
\newblock available online at \bl{\url{http://arxiv.org/abs/2403.05413}}.

\bibitem{Fernique74}
X.~Fernique.
\newblock Des resultats nouveaux sur les processus {G}aussiens.
\newblock {\em C.R. Acad. Sci. Paris Ser A-B}, 278:A363--A365, 1974.

\bibitem{Fernique75}
X.~Fernique.
\newblock Regularite des trajectoires des fonctions aleatoires {G}aussiens.
\newblock {\em Springer Lecture notes}, 480:1--96, 1975.

\bibitem{GianHarTso05}
A.~Giannopoulos, M.~Hartzoulaki, and A.~Tsolomitis.
\newblock Random points in isotropic unconditional convex bodies.
\newblock {\em Journal of the London Mathematical Society}, 72(3):779--798,
  2005.

\bibitem{Gordon85}
Y.~Gordon.
\newblock Some inequalities for {G}aussian processes and applications.
\newblock {\em Israel Journal of Mathematics}, 50(4):265--289, 1985.

\bibitem{GuionnShly09}
A.~Guionnet and D.~Shlyakhtenko.
\newblock Free diffusions and matrix models with strictly convex interaction.
\newblock {\em Geometric and Functional Analysis}, 18(6):1875--1916, 2009.

\bibitem{HaagThor05}
U.~Haagerup and S.~Thorbjornsen.
\newblock A new application of random matrices:
  $\mbox{Ext}(c^*_{\mbox{red}}(f_2))$ is not a group.
\newblock {\em Annals of Mathematics}, 162(2):711--775, 2005.

\bibitem{Hayes22}
B.~Hayes.
\newblock A random matrix approach to the {P}eterson--{T}hom conjecture.
\newblock {\em Indiana Univ. Math. J.}, 71(3):1243--1297, 2022.

\bibitem{HideMagee23}
W.~Hide and M.~Magee.
\newblock Near optimal spectral gaps for hyperbolic surfaces.
\newblock {\em Annals of Mathematics}, 198(2):791--824, 2023.

\bibitem{Husson22}
J.~Husson.
\newblock Large deviations for the largest eigenvalue of matrices with variance
  profiles.
\newblock {\em Electronic Journal of Probability}, 27:1--44, 2022.

\bibitem{HussMcKenna24}
J.~Husson and B.~McKenna.
\newblock Large deviations for the largest eigenvalue of generalized sample
  covariance matrices.
\newblock {\em Electronic Journal of Probability}, 29:1--48, 2024.

\bibitem{KanLovSim97}
R.~Kannan, L.~Lovasz, and M.~Simonovits.
\newblock Random walks and an $o^*(n^5)$ volume algorithm for convex bodies.
\newblock {\em Random Structures \& Algorithms}, 11(1):1--50, 1997.

\bibitem{KolLou17}
V.~Koltchinskii and K.~Lounici.
\newblock Concentration inequalities and moment bounds for sample covariance
  operators.
\newblock {\em Bernoulli}, 23(1):110--133, 2017.

\bibitem{Kunisky26}
D.~Kunisky.
\newblock Lehner’s operator norm formulas, semidefinite programming, and
  spiked matrix models.
\newblock 2026.
\newblock available online at \bl{\url{http://arxiv.org/abs/2606.14687}}.

\bibitem{LatHandYouss18}
R.~Latala, R.~van Handel, and P.~Youssef.
\newblock The dimension-free structure of nonhomogeneous random matrices.
\newblock {\em Inventiones Mathematicae}, 214(2):1031--1080, 2018.

\bibitem{LeeLee24}
S.~Lee and J.~O. Lee.
\newblock Phase transition for the generalized two-community stochastic block
  model.
\newblock {\em J. Appl. Probab.}, 61:385--400, 2024.

\bibitem{Lehner98}
F.~Lehner.
\newblock Free operators with operator coefficients.
\newblock {\em Colloquium Mathematicae}, 74(2):321--328, 1998.

\bibitem{Lehner99}
F.~Lehner.
\newblock Computing norms of free operators with matrix coefficients.
\newblock {\em American Journal of Mathematics}, 121(3):453--486, 1999.

\bibitem{Lehner01}
F.~Lehner.
\newblock On the computation of spectra in free probability.
\newblock {\em J. Funct. Anal.}, 183(2):451--471, 2001.

\bibitem{Lesetal17}
T.~Lesieur, L.~Miolane, M.~Lelarge, F.~Krzakala, and L.~Zdeborová.
\newblock Statistical and computational phase transitions in spiked tensor
  estimation.
\newblock In {\em 2017 IEEE International Symposium on Information Theory
  (ISIT)}, pages 511--515, 2017.

\bibitem{LubPer16}
E.~Lubetzky and Y.~Peres.
\newblock Cutoff on all {R}amanujan graphs.
\newblock {\em Geometric and Functional Analysis}, 26(4):1148--1177, 2016.

\bibitem{Magee25}
M.~Magee.
\newblock Strong convergence of unitary and permutation representations of
  discrete groups.
\newblock 2025.
\newblock available online at \bl{\url{http://arxiv.org/abs/2503.21619}}.

\bibitem{Male12}
C.~Male.
\newblock The norm of polynomials in large random and deterministic matrices.
\newblock {\em Probability Theory and Related Fields}, 154(3--4):659--750,
  2012.

\bibitem{MontRich14}
A.~Montanari and E.~Richard.
\newblock A statistical model for tensor {PCA}.
\newblock {\em Advances in Neural Information Processing Systems}, 27, 2014.

\bibitem{PakKK23}
A.~Pak, J.~Ko, and F.~Krzakala.
\newblock Optimal algorithms for the inhomogeneous spiked {W}igner model.
\newblock In {\em Advances in Neural Information Processing Systems},
  volume~36, pages 5557--5586, 2023.

\bibitem{Paouris06}
G.~Paouris.
\newblock Concentration of mass on convex bodies.
\newblock {\em Geometric and Functional Analysis}, 16(5):1021--1049, 2006.

\bibitem{ParHand25}
E.~Parmaksiz and R.~van Handel.
\newblock Computing extreme singular values of free operators.
\newblock 2025.
\newblock available online at \bl{\url{http://arxiv.org/abs/2510.23987}}.

\bibitem{Parraud22}
F.~Parraud.
\newblock On the operator norm of non-commutative polynomials in deterministic
  matrices and iid {H}aar unitary matrices.
\newblock {\em Probability Theory and Related Fields}, 182(3-4):1019--1073,
  2022.

\bibitem{PerryWB20}
A.~Perry, A.~S. Wein, and A.~S. Bandeira.
\newblock Statistical limits of spiked tensor models.
\newblock {\em Annales de l'Institut Henri Poincare, Probabilites et
  Statistiques}, 56(1):238--295, 2020.

\bibitem{PourBM24}
F.~Pourkamali, J.~Barbier, and N.~Macris.
\newblock Matrix inference in growing rank regimes.
\newblock {\em {IEEE} Trans. Inf. Theory}, 70(11):8133--8163, 2024.

\bibitem{Rudelson99}
M.~Rudelson.
\newblock Random vectors in the isotropic position.
\newblock {\em Journal of Functional Analysis}, 164(1):60--72, 1999.

\bibitem{Schultz05}
H.~Schultz.
\newblock Non-commutative polynomials of independent {G}aussian random
  matrices: {T}he real and symplectic cases.
\newblock {\em Probability Theory and Related Fields}, 131(2):261--309, 2005.

\bibitem{Slep62}
D.~Slepian.
\newblock The one sided barier problem for {G}aussian noise.
\newblock {\em Bell System Tech. Journal}, 41:463--501, 1962.

\bibitem{StojnicCSetam09}
M.~Stojnic.
\newblock Various thresholds for $\ell_1$-optimization in compressed sensing.
\newblock 2009.
\newblock available online at \bl{\url{http://arxiv.org/abs/0907.3666}}.

\bibitem{StojnicRegRndDlt10}
M.~Stojnic.
\newblock Regularly random duality.
\newblock 2013.
\newblock available online at \bl{\url{http://arxiv.org/abs/1303.7295}}.

\bibitem{Stojnicgscompyx16}
M.~Stojnic.
\newblock Fully bilinear generic and lifted random processes comparisons.
\newblock 2016.
\newblock available online at \bl{\url{http://arxiv.org/abs/1612.08516}}.

\bibitem{Stojnicgscomp16}
M.~Stojnic.
\newblock Generic and lifted probabilistic comparisons -- max replaces minmax.
\newblock 2016.
\newblock available online at \bl{\url{http://arxiv.org/abs/1612.08506}}.

\bibitem{Stojnicclupsk25}
M.~Stojnic.
\newblock A {CLuP} algorithm to practically achieve $\sim 0.76$ {SK}--model
  ground state free energy.
\newblock {\em Journal of Statistical Mechanics: Theory and Experiment},
  (11):123302, 2025.

\bibitem{Stojnicalgbp25}
M.~Stojnic.
\newblock Binary perceptron computational gap -- a parametric fl-{RDT} view.
\newblock {\em Journal of Statistical Mechanics: Theory and Experiment},
  (4):043301, 2026.

\bibitem{Stojniccovmat26}
M.~Stojnic.
\newblock Precise sample covariance spectral norm error -- an {RDT} view.
\newblock 2026.
\newblock available online at \bl{\url{http://arxiv.org/abs/2607.14460}}.

\bibitem{Stojnicleh26}
M.~Stojnic.
\newblock An {RDT} based confirmation of {L}ehner's formula for
  {K}ronecker-{G}aussian matrices.
\newblock 2026.
\newblock available online at \bl{\url{http://arxiv.org/abs/2607.26551}}.

\bibitem{Sudakov71}
V.~N. Sudakov.
\newblock Gaussian random processes and measures of solid angles in {H}ilbert
  space.
\newblock {\em Soviet Math. Dokl.}, 12(1):412--415, 1971.

\bibitem{Tropp11}
J.~A. Tropp.
\newblock User-friendly tail bounds for sums of random matrices.
\newblock {\em Foundations of Computational Mathematics}, 11(4):373--434, 2011.

\bibitem{VanHandelSpec17}
R.~van Handel.
\newblock On the spectral norm of {G}aussian random matrices.
\newblock {\em Transactions of the American Mathematical Society},
  369(11):8161--8178, 2017.

\bibitem{HandelSurvey26}
R.~van Handel.
\newblock Strong convergence: A short survey.
\newblock In {\em Proceedings of the International Congress of Mathematicians},
  pages 145--165. SIAM, 2026.

\bibitem{VershNonAsym12}
R.~Vershynin.
\newblock Introduction to the non-asymptotic analysis of random matrices.
\newblock In {\em Compressed Sensing}, pages 210--268. Cambridge University
  Press, 2012.

\bibitem{Vitale00}
R.~A. Vitale.
\newblock Some comparisons for {G}aussian processes.
\newblock {\em Proceedings of the American Mathematical Society},
  128(10):3043--3046, 2000.

\bibitem{Voic91}
D.~Voiculescu.
\newblock Limit laws for random matrices and free products.
\newblock {\em Invent. Math.}, 104(1):201--220, 1991.

\end{thebibliography}
\end{singlespace}

\end{document}